\documentclass[preprint,12pt]{elsarticle}

\usepackage{amssymb, amsthm, amsmath,newtxtext,enumitem,}
\usepackage[stix2]{newtxmath}
\usepackage{pgfplots}
\usepackage{float}
\usepackage[colorlinks=true]{hyperref}
\pgfplotsset{compat=1.18}

\newtheorem{theorem}{Theorem}[section]
\newtheorem{lemma}[theorem]{Lemma}

\newtheorem{proposition}[theorem]{Proposition}
\newtheorem{remark}[theorem]{Remark}

\newtheorem{problem}[theorem]{Problem}

\newcommand{\R}{\mathbb R}
\newcommand{\U}{\mathbf U}

\newcommand{\x}{\mathbf x}
\newcommand{\y}{\mathbf y}

\newcommand{\eps}{\varepsilon}

\newcommand{\dif}{\ensuremath{\,\mathrm{d}}}

\newcommand{\N}{\mathcal{N}}

\newcommand{\Lop}{\mathcal L}

\newcommand{\bxi}{\boldsymbol{\xi}}

\renewcommand{\div}{\mathop{\mathrm{div}}\nolimits}

\newcommand{\dist}{\mathop{\mathrm{dist}}\nolimits}

\numberwithin{equation}{section}

\begin{document}
	
	\begin{frontmatter}
		
		
		
		\title{Steady Euler flows with contact discontinuities in infinitely long nozzles with general upstream data} 
		
		\author[label1]{Jun Chen}
		\author[label1]{Xuemei Deng}
		\affiliation[label1]{organization={College of Science, China Three Gorges University},
			city={Yichang},
			state={Hubei},
			country={China}}
		
		\author[label2]{Xiaoguang You\corref{cor1}}
			\cortext[cor1]{Corresponding author. Email:xgyou@jxstnu.edu.cn}
		
		\affiliation[label2]{organization={School of Mathematical Sciences, Jiangxi Science \& Technology Normal University},
			city={Nanchang},
			state={Jiangxi},
			country={China}}
		
		\begin{abstract}
	We investigate steady compressible Euler flows in two-dimensional infinitely long nozzles, where the piecewise smooth upstream data at infinity admits a characteristic discontinuity. Except the subsonicity condition, no additional constraints are imposed on the data. We establish the existence and uniqueness of subsonic weak solutions associated with  a smooth contact discontinuity curve. The original problem is reformulated into an elliptic equation in divergence form with discontinuous coefficients, such that the contact discontinuity conditions are inherently preserved in the solution of the elliptic problem.  We further investigate the downstream asymptotic behavior and show that the convergence rate of the flow matches that of the nozzle walls.
		\end{abstract}

		\begin{keyword}
			Steady Euler flows \sep Contact discontinuities \sep Free boundary \sep Existence and uniqueness \sep Asymptotic behavior
			
			
			
		\end{keyword}
		
	\end{frontmatter}
	
	\section{Introduction}\label{Sec:introduction}
	
	We consider the two-dimensional steady compressible Euler system
	\begin{equation}\label{Equ:euler}
		\left\{
		\begin{aligned}
			&\div_{\mathbf x}(\rho \mathbf u)=0,\\
			&\div_{\mathbf x}(\rho \mathbf u\otimes \mathbf u)+\nabla_{\mathbf x}p=0,\\
			&\div_{\mathbf x}(\rho \mathbf u E+p\mathbf u)=0,
		\end{aligned}
		\right.
	\end{equation}
	where $\mathbf x=(x_1,x_2)\in\mathbb R^2$, $\rho$, $\mathbf u=(u_1,u_2)$, and $p$ denote the density, velocity, and pressure of the flow, respectively. For a polytropic gas,
	\[
	E=\frac{|\mathbf u|^2}{2}+\frac{p}{(\gamma-1)\rho},\qquad \gamma>1.
	\]
	For smooth solutions, the Bernoulli function and the entropy function are transported along streamlines:
	\begin{equation}\label{Equ:conservation}
		\div_{\mathbf x}(\rho\mathbf u B)=0,\qquad
		\div_{\mathbf x}(\rho\mathbf u S)=0,
	\end{equation}
	where
	\begin{equation}\label{Equ:Bernoulli}
		B=\frac12|\mathbf u|^2+\frac{\gamma p}{(\gamma-1)\rho},\qquad
		 S=\frac{\gamma p}{(\gamma-1)\rho^\gamma}.
	\end{equation}
	The sound speed and the Mach number are given by
	\[
	c=\sqrt{\frac{\gamma p}{\rho}},\qquad M=\frac{|\mathbf u|}{c}.
	\]
	
This paper is concerned with subsonic flows (i.e., $M<1$) in infinitely long nozzles that contain a contact discontinuity. To be more precise, let $\alpha \in (0,1)$ be a fixed number, and $W^\pm\in C^{2,\alpha}(\mathbb R)$ be functions to describe the shape of the nozzle boundaries satisfying
\begin{equation}\label{Cond:wall-regularity}
	\|W^\pm\|_{2,\alpha;\mathbb R}\leqslant C,
\end{equation}
and
\begin{equation}\label{Def:omega}
	W^\pm(x_1)\to0\qquad\text{as }x_1\to-\infty.
\end{equation} 
Let $\zeta\in\mathbb R$ be the amplitude of boundary perturbation, and define an infinitely long nozzle $\N_\zeta$ by
	\begin{equation}\label{Def:nozzle}
		\N_\zeta
		=
		\Big\{(x_1,x_2)\in\mathbb R^2\; \big| \;
		-1+\zeta W^-(x_1)<x_2<1+\zeta W^+(x_1)
		\Big\}.
	\end{equation}
	The upper and lower walls are denoted by
	\[
	\Gamma_\zeta^\pm
	=
	\Big\{(x_1,x_2)\in\mathbb R^2\; \big| \; x_2=h_\zeta^\pm(x_1)\Big\},
	\qquad
	h_\zeta^\pm(x_1)=\pm1+\zeta W^\pm(x_1),
	\]
	on which the slip boundary condition is imposed:
	\begin{equation}\label{Cond:slip}
		\mathbf u\cdot \mathbf n^\pm=0,
	\end{equation}
	where $\mathbf n^\pm$ are the unit normals on $\Gamma^\pm_\zeta$.	We prescribe at the entrance $x_1=-\infty$ a piecewise smooth state
	\begin{equation}
	\mathbf U_\ell=(u_\ell,0,p_0,\rho_\ell),
	\end{equation}
	which has a jump discontinuity at some point $x_2=x_2^*\in(-1,1)$. It should be noted that, to be compatible with $\eqref{Equ:euler}_2$ and $\eqref{Equ:euler}_3$, the pressure $p_0$  must be a positive constant.   We assume that
	\begin{equation}\label{Cond:inlet-reg}
		u_\ell,\rho_\ell\in C^{1,\alpha}\big([-1,x_2^*]\big)\cap C^{1,\alpha}\big([x_2^*,1]\big),
	\end{equation}
	and
	\begin{equation}\label{Cond:inlet-pos}
		\inf_{[-1,x_2^*)\cup(x_2^*,1]} u_\ell>0,
		\quad \inf_{[-1,x_2^*)\cup(x_2^*,1]} \rho_\ell>0,
		\quad
		\sup_{[-1,x_2^*)\cup(x_2^*,1]}\frac{u_\ell}{c_\ell}<1,
	\end{equation}
	where
	\begin{equation}
		c_\ell = \sqrt{\frac{\gamma p_0}{\rho_\ell}}.
	\end{equation}
	The lower and upper  mass fluxes are given by
	\begin{equation}\label{Def:m-pm}
		m^-=\int_{-1}^{x_2^*}\rho_\ell(s)u_\ell(s)\dif s,
		\qquad
		m^+=\int_{x_2^*}^{1}\rho_\ell(s)u_\ell(s)\dif s,
	\end{equation}
	and are positive by \eqref{Cond:inlet-pos}.
	
	The main purpose of this paper is to determine, together with the flow, an unknown contact discontinuity
	\[
	\Gamma_\zeta^*
	=
	\Big\{(x_1,x_2)\in \mathbb{R}^2 \ \big | \ x_2=\omega^*(x_1)\Big\},
	\qquad
	\omega^*(x_1)\to x_2^*
	\quad\text{as }x_1\to-\infty,
	\]
	which separates the nozzle into two subdomains
	\[
	\N_\zeta^+
	=
	\N_\zeta\cap\Big\{x_2>\omega^*(x_1)\Big\},
	\qquad
	\N_\zeta^-
	=
	\N_\zeta\cap\Big\{x_2<\omega^*(x_1)\Big\}.
	\]
Across $\Gamma_\zeta^*$, the flow satisfies the contact discontinuity conditions:
\begin{equation}\label{Cond:RH-intro}
	\mathbf u \cdot \mathbf n = 0, \qquad [p] = 0,
\end{equation}
where, for any piecewise continuous function $f$ on $\N_\zeta$, the jump $[f]$ across $\Gamma_\zeta^*$ is defined by
\[
[f](\mathbf x) := f|_{\N_\zeta^+}(\mathbf x) - f|_{\N_\zeta^-}(\mathbf x), \qquad \mathbf x \in \Gamma_\zeta^*.
\]
Here and throughout, $f|_{\N_\zeta^\pm}$ denote the traces of $f$ on $\Gamma_\zeta^*$ from the corresponding sides.
	
Since the flow is discontinuous across $\Gamma^*_\zeta$, we seek weak solutions of the Euler system \eqref{Equ:euler} in the following sense: for a vector field $\mathbf F$ on a domain $\N$, the equation
\[
\operatorname{div} \mathbf F = 0
\]
is satisfied  if
\[
\int_{\N} \mathbf F \cdot \nabla {v} \, d\mathbf x = 0 \qquad \text{for all } {v} \in C_0^\infty(\N).
\]

With this in mind, we now formulate the problem precisely.
\begin{problem}\label{Problem:physical}
	Given $\zeta\in\mathbb R$ and upstream data $\mathbf U_\ell$ satisfying \eqref{Cond:inlet-reg} and \eqref{Cond:inlet-pos}, find
	\[
	\mathbf U\in C^{1,\alpha}(\overline{\N_\zeta^+})\cap C^{1,\alpha}(\overline{\N_\zeta^-}),\qquad
	\omega^*\in C^{2,\alpha}(\R)
	\]
	such that
	\begin{enumerate}[label=\textup{(\roman*)}, leftmargin=2em]
		\item $\mathbf U$ is a weak solution of \eqref{Equ:euler} subject to \eqref{Cond:slip} and \eqref{Cond:RH-intro};
\item \label{Cond:p4} the flow is uniformly subsonic and possesses  uniformly positive mass flux:
\[
\sup_{\x \in\N^\pm_\zeta} M(\x)<1 \quad \text{ and } \quad \inf_{\x \in\N^\pm_\zeta} (\rho u_1)(\x)>0;
\]
		\item for every $x_2\in(-1,x_2^*)\cup(x_2^*,1)$,
		\[
		\mathbf U(x_1,x_2)\to\mathbf U_\ell(x_2)\quad\text{as }x_1\to-\infty.
		\]
	\end{enumerate}
\end{problem}

The following theorem asserts that Problem~\ref{Problem:physical} admits a unique solution on a maximal interval of the amplitude parameter, at whose finite endpoints the flow necessarily tends to either a sonic or a stagnation state.
\begin{theorem}[Existence]\label{Thm:main}
	There exist $\zeta^+ \in \mathbb R_+ \cup \{+\infty\}$ and $\zeta^- \in \mathbb R_- \cup \{-\infty\}$ such that for each $\zeta$ in the interval $(\zeta^-,\zeta^+)$, Problem~\ref{Problem:physical} admits a unique solution.
	If an endpoint of this interval is finite, then for any sequence $\zeta_n\in(\zeta^-,\zeta^+)$ converging to that endpoint, the following alternative holds:
	\begin{equation}\label{Est:blowup}
		\sup_{\x\in\N^\pm_{\zeta_n}} M^{(\zeta_n)}(\x) \to 1
		\quad\text{or}\quad
		\inf_{\x\in\N^\pm_{\zeta_n}} \big(\rho^{(\zeta_n)} u_1^{(\zeta_n)}\big)(\x) \to 0,
	\end{equation}
	where, for $\zeta=\zeta_n$, $\mathbf U^{(\zeta_n)} = \big(\mathbf u^{(\zeta_n)}, p^{(\zeta_n)}, \rho^{(\zeta_n)}\big)$ and $M^{(\zeta_n)}$ are the solution to Problem~\ref{Problem:physical} and the Mach number, respectively.
\end{theorem}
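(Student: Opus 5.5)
We reduce the free boundary problem to a fixed-domain elliptic problem via the stream function and then run a continuity argument in the amplitude parameter $\zeta$.

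\textbf{Reformulation.} Let $\psi$ be the stream function, $\nabla^\perp\psi=\rho\mathbf u$, normalized so that $\psi=-m^-$ on $\Gamma^-_\zeta$ and $\psi=m^+$ on $\Gamma^+_\zeta$. The slip condition \eqref{Cond:slip} and the first condition in \eqref{Cond:RH-intro} then say that the walls and the contact discontinuity are level sets of $\psi$. In particular $\Gamma^*_\zeta=\{\psi=0\}$, which is a genuine graph because $\rho u_1>0$. Since $B$ and $S$ are transported along streamlines, they are functions of $\psi$ alone: $B=\mathcal B(\psi)$ and $S=\mathcal S(\psi)$. Both are computed from $\mathbf U_\ell$ through the upstream relation $\psi=\int_{x_2^*}^{x_2}\rho_\ell u_\ell$, and both are piecewise $C^{1,\alpha}$ with a jump at $\psi=0$.

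On the subsonic branch, Bernoulli's law gives $\rho=\rho(|\nabla\psi|^2,\psi)$. The momentum equation reduces to
\[
\div\Big(\frac{\nabla\psi}{\rho(|\nabla\psi|^2,\psi)}\Big)=\rho\,\mathcal B'(\psi)-\frac{\rho^\gamma}{\gamma}\mathcal S'(\psi),
\]
understood in the weak sense across $\{\psi=0\}$. The key point is that the weak formulation forces continuity of $\partial_\nu\psi/\rho$ across the level set. Together with $\rho u_1>0$, this is equivalent to $[p]=0$. Hence weak solutions of this quasilinear equation with discontinuous coefficients automatically satisfy \eqref{Cond:RH-intro}.

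Conversely, from such a $\psi$ with $|\nabla\psi|$ below the sonic threshold and $\partial_{x_2}\psi>0$, one recovers $\mathbf U$. The regularity of the level set $\omega^*\in C^{2,\alpha}$ follows from piecewise $C^{2,\alpha}$ estimates for transmission problems. Straightening the interface via a von Mises / partial hodograph change of variables $(x_1,\psi)$ makes the interface fixed at $\psi=0$, where Schauder theory for transmission problems applies.

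\textbf{Existence and uniqueness.}
\begin{enumerate}[label=(\arabic*)]
\item We truncate the nonlinearity by subsonic and positivity cutoffs, which gives a uniformly elliptic problem.
\item We solve this problem on bounded truncated nozzles $\N_\zeta\cap\{|x_1|<L\}$ by Leray--Schauder, using the upstream profile as boundary data.
\item We pass $L\to\infty$ using uniform $L^\infty$ bounds (comparison), De Giorgi--Nash H\"older bounds, and piecewise $C^{1,\alpha}$ estimates.
\item We identify the upstream limit. A blow-down/translation argument shows that any limit as $x_1\to-\infty$ solves the problem in the flat strip. Its solution is then shown to be the $x_1$-independent one by an ODE/energy uniqueness argument on the strip, which gives condition (iii) of Problem~\ref{Problem:physical}.
\item We prove uniqueness by taking the difference of two solutions. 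The difference satisfies a linear divergence-form equation with bounded measurable coefficients and vanishes at the walls and at $-\infty$. It is bounded, so a maximum principle on infinite strips forces it to vanish. This uses the monotone structure in the gradient; the zero-order terms need a sign condition or a Poincar\'e-in-$x_2$ argument on the narrow strip, which must be checked.
\end{enumerate}

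\textbf{Continuity method in $\zeta$.} Define $\zeta^+$ as the supremum of those $\zeta>0$ such that, for all $\zeta'\in[0,\zeta)$, a solution exists satisfying (ii) with margins that are uniform on compact subintervals; define $\zeta^-$ analogously. At $\zeta=0$ the shear solution $\mathbf U_\ell$ works.
\begin{itemize}
\item \emph{Openness.} Near a given $\zeta_0$ the cutoffs are inactive, since the margins are strict. Step (2) then produces solutions for nearby $\zeta$, and uniform estimates keep the margins.
\item \emph{Closedness and the blow-up alternative.} Suppose $\zeta_n\to\zeta^+<\infty$ while the Mach supremum stays $\le1-\delta$ and the inf flux stays $\ge\delta$. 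The equation then remains uniformly elliptic with uniform estimates. By compactness a solution exists at $\zeta^+$ with the same margins, and openness extends past $\zeta^+$, a contradiction. Hence along any sequence one of the two quantities in \eqref{Est:blowup} degenerates. Passing to subsequences handles the ``for any sequence'' statement, using monotonicity of the sup/inf along the continuation, or arguing by contradiction on subsequences.
\end{itemize}

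The main obstacle is the uniform a priori estimates in step (3), in particular the $C^{1,\alpha}$ estimates up to the free interface, which are independent of $L$ and of $\zeta$ on compact subsets. This is where the transmission-problem regularity must be established.
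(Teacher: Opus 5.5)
\textbf{There is a genuine gap, starting with the reformulation, and the rest of your argument is built on it.}

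\emph{The reformulation.} Suppose the right-hand side $\rho\mathcal B'(\psi)-\frac{\rho^\gamma}{\gamma}\mathcal S'(\psi)$ is taken piecewise, as a bounded function. Integrating the weak form by parts on each side of $\{\psi=0\}$ then gives $[\partial_\nu\psi/\rho]=0$. Since $\nabla\psi$ is normal to the level set and $|\nabla\psi|=\rho|\mathbf u|$, this says $[|\mathbf u|]=0$, i.e.\ no vortex sheet. That is not equivalent to $[p]=0$. With $[p]=0$, Bernoulli's law gives $\frac12[|\mathbf u|^2]=[B]-\frac{\gamma}{\gamma-1}p\,[1/\rho]$, which is generically nonzero because $B$ and $S$ jump independently. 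In fact the background shear flow $\mathbf U_\ell$ with $[u_\ell]\neq0$ is not a weak solution of your equation, so you lose even the starting point $\zeta=0$ of the continuation. If instead you keep the jumps of $\mathcal B$ and $\mathcal S$ as Dirac masses on $\{\psi=0\}$, they are multiplied by $\rho$, which is discontinuous exactly there, so the product is undefined.

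The missing idea is to use the von Mises (Lagrangian) variables for the whole problem, not only for regularity. With $\phi(y_1,y_2)=x_2$, the $x_2$-momentum equation becomes $\partial_{y_1}u_2+\partial_{y_2}p=0$, i.e.\ $\div_{\y}\mathbf A(S(y_2),B(y_2),\nabla\phi)=0$ with $A^2=p$. The interface is then fixed at $y_2=0$. The conormal condition of the weak form is exactly $[p]=0$, and continuity of $\phi$ gives $[u_2/u_1]=0$.

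\emph{Uniqueness, steps (4)--(5).} The same choice is what makes uniqueness work; as stated, your steps do not go through. In Eulerian variables the difference of two solutions carries zero-order terms from $\rho\mathcal B'(\psi)$ and $\rho^\gamma\mathcal S'(\psi)$. These terms have no sign. For $C^{1,\alpha}$ data, $\mathcal B'$ and $\mathcal S'$ are only $C^{0,\alpha}$, so the terms are not even Lipschitz in $\psi$. The strip is also not narrow relative to them, so a Poincar\'e argument is unavailable for large data. This is precisely where earlier works needed convexity or monotonicity hypotheses. In Lagrangian variables $S$ and $B$ depend only on the independent variable $y_2$. The difference $\bar\psi$ of two solutions therefore satisfies $\partial_{y_i}(a_{ij}\partial_{y_j}\bar\psi)=0$ with uniformly elliptic $a_{ij}$ and no zero-order term, and a cutoff energy estimate gives $\bar\psi\equiv0$. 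Upstream convergence likewise comes from a localized energy estimate, not from a uniqueness theorem on the flat strip.

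\emph{Closedness.} You assert that uniform subsonicity and positive flux give uniform estimates. That is the heart of the blow-up alternative, and it is not automatic. The coefficients are discontinuous across the interface, so you cannot differentiate in the normal direction. The paper's device is the pair $w=\partial_{y_1}\phi$ and $p$:
\begin{itemize}
\item both are continuous across $\Gamma^*$ and satisfy $\nabla^\perp_{\y}p=\mathrm D_{\bxi}\mathbf A\cdot\nabla w$;
\item each therefore solves a divergence-form elliptic equation with bounded measurable coefficients across the interface, so De Giorgi--Nash gives $C^\beta$ bounds near $\Gamma^*$;
\item $\partial_{y_2}\phi$ is then recovered algebraically through Bernoulli's law;
\item transmission Schauder estimates bootstrap this to $C^{2,\alpha}$.
\end{itemize}

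\emph{The ``any sequence'' clause.} No monotonicity along the continuation is available. What the contradiction argument actually yields is $\min\{1-\sup M,\ \inf\rho u_1\}\to0$ along every sequence.
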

The preceding theorem implies that the \(C^{1,\alpha}\)-norm of the solution cannot blow up unless the alternative \eqref{Est:blowup} occurs. If the upstream data possess higher regularity, then the solution exhibits the same higher regularity, as stated in the following theorem.
\begin{theorem}[Regularity]\label{Thm:regularity}
	Let $k\geqslant 2$ be an integer. Under the assumptions of  Problem~\ref{Problem:physical}, assume further that
	\[
	\mathbf U_\ell \in C^{k,\alpha}\big([-1,x_2^*]\big) \cap C^{k,\alpha} \big([x_2^*,1]\big) \text{ and } W^\pm \in C^{k+1,\alpha}(\overline{\mathbb R}).
	\]
	Then, for every $\zeta\in (\zeta^-,\zeta^+)$, the solution $\mathbf U^{(\zeta)}$ constructed in Theorem~\ref{Thm:main} is of class $C^{k,\alpha}$ on $\overline{\N^+}$ and $\overline{\N^-}$, with $\omega^* \in C^{k+1,\alpha}(\mathbb R)$.
\end{theorem}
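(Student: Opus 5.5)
The argument bootstraps regularity through the stream-function reformulation behind Theorem~\ref{Thm:main}. Set $\psi_x=(-\rho u_2,\rho u_1)$. By \eqref{Conditionplaceholder} it is cleaner to say: since $\rho u_1>0$, the Bernoulli function $B$ and the entropy $S$ are constant along streamlines, so $B=\mathcal B(\psi)$ and $S=\mathcal S(\psi)$. These two functions are determined by the upstream data through the map $x_2\mapsto\int_{-1}^{x_2}\rho_\ell u_\ell$. The contact discontinuity is the level set $\Gamma^*_\zeta=\{\psi=m^-\}$. Under the assumption $\mathbf U_\ell\in C^{k,\alpha}$ on each side of $x_2^*$, the functions $\mathcal B$ and $\mathcal S$ belong to $C^{k,\alpha}$ on $[0,m^-]$ and on $[m^-,m^-+m^+]$, because $\rho_\ell u_\ell\ge c>0$ makes the inverse map $C^{k,\alpha}$.

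\textbf{Step 1: straighten the free boundary.} Rather than working in the physical plane with an unknown interface, pass to von Mises (Lagrangian) coordinates $(x_1,\psi)$, or equivalently flatten the level set $\{\psi=m^-\}$. In the coordinates $(y_1,y_2)=(x_1,\psi(\x))$, unknowns such as $x_2(y_1,y_2)$ solve a quasilinear elliptic equation on the fixed strip $\R\times(0,m^-+m^+)$. The interface becomes the fixed line $y_2=m^-$. On this line the unknown $x_2$ is continuous, and the transmission condition $[p]=0$ becomes a conormal jump condition. The coefficients depend on $y_2$ through $\mathcal B,\mathcal S$, which are piecewise $C^{k,\alpha}$. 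Uniform subsonicity and $\inf\rho u_1>0$ give uniform ellipticity. The boundary walls correspond to $y_2=0$ and $y_2=m^-+m^+$, where $x_2=h^\pm_\zeta(y_1)$ are Dirichlet data of class $C^{k+1,\alpha}$.

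\textbf{Step 2: induction on $k$.} The base case is the regularity $x_2\in C^{2,\alpha}$ up to the interface from each side, which follows from the solution of Theorem~\ref{Thm:main}. Assume $x_2\in C^{j+1,\alpha}$ on each closed half-strip for some $2\le j<k+1$. Since nothing depends on $y_1$ except through the walls, differentiate the equation, the transmission conditions, and the boundary conditions tangentially in $y_1$. The function $w=\partial_{y_1}x_2$ then satisfies a linear divergence-form transmission problem. Its coefficients lie in $C^{j,\alpha}$ on each side, its interface jump data are $C^{j,\alpha}$, and its boundary data are $C^{j,\alpha}$.

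The Schauder estimate for elliptic transmission problems across a flat interface (Li--Nirenberg/Li--Vogelius type, or simply the classical estimate for oblique-type transmission problems) gives $w\in C^{j,\alpha}$ on each side, so all tangential derivatives are controlled. The normal derivatives $\partial_{y_2}^{j+2}x_2$ are then recovered from the equation itself on each side. There the coefficient of $\partial_{y_2}^2x_2$ is bounded away from zero by ellipticity, and the remaining terms involve at most one $y_2$-derivative of the piecewise $C^{k,\alpha}$ functions $\mathcal B,\mathcal S$. This yields $x_2\in C^{j+2,\alpha}$ on each side. Because the domain is unbounded, the estimates are applied locally on unit boxes, with constants uniform in $y_1$ thanks to the uniform bounds from Theorem~\ref{Thm:main}. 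The induction stops at $x_2\in C^{k+1,\alpha}$, since $\mathcal B,\mathcal S$ have only $C^{k,\alpha}$ regularity.

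\textbf{Step 3: return to physical variables.} The interface is $\omega^*(x_1)=x_2(x_1,m^-)$, which lies in $C^{k+1,\alpha}$. The velocity is computed from $\rho u_1=1/\partial_{y_2}x_2$ and $u_2/u_1=\partial_{y_1}x_2$, and $\rho,p$ are algebraic functions of $|\mathbf u|$, $\mathcal B(\psi)$ and $\mathcal S(\psi)$. All these quantities are $C^{k,\alpha}$ on each side. The inverse coordinate change is $C^{k+1,\alpha}$, so $\mathbf U\in C^{k,\alpha}(\overline{\N^\pm})$.

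\textbf{Main obstacle.} The delicate point is the rigorous tangential differentiation at the interface and walls. One must justify that the difference quotients satisfy the linearized transmission problem with $[p]=0$ preserved, and that the transmission Schauder estimate applies with constants uniform over the infinite strip. I would handle this with difference quotients in $y_1$ on unit boxes, since the interface is flat in these coordinates.
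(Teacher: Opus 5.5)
Your proposal is correct and takes essentially the paper's route: flatten the contact line in Lagrangian coordinates, differentiate the divergence-form equation tangentially in $y_1$, apply the flat-interface transmission Schauder estimate, recover the remaining $y_2$-derivatives from the equation using $A^2_{\xi_2}>0$, and transfer back through the $C^{k+1,\alpha}$ map $(y_1,y_2)\mapsto(y_1,\phi)$ (the only variation is that the paper applies the first-order estimate, Lemma~2.2 of \cite{Chen2024Elliptic}, to $\partial_{y_1}^n\phi$ at each step rather than a higher-order estimate to $\partial_{y_1}\phi$, which amounts to the same thing). You do need to fix your indices: the step should run over $1\leqslant j\leqslant k-1$, and the transmission estimate must give $w\in C^{j+1,\alpha}$, using boundary data $\zeta(W^\pm)'\in C^{j+1,\alpha}$, because recovering $\partial_{y_2}^{j+2}x_2$ uses $j+1$ derivatives of the entropy and Bernoulli functions, and this is exactly what caps the bootstrap at $C^{k+1,\alpha}$.
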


\begin{remark}
	The downstream nozzle is not required to be asymptotically flat. However, if $W^\pm$ admit limits as $x_1\to+\infty$, then the solution has a corresponding downstream asymptotic state. This is proved in Section~\ref{Sec:outlet}.
\end{remark}

\medskip

\noindent\textbf{Previous results.}
The study of steady compressible Euler flows in infinitely long nozzles has a long history. For irrotational flows (potential flows), Chen and Feldman \cite{Chen2004, Chen2007} first established the existence and stability of multidimensional transonic flows in nozzles of general cross-sections. For the full Euler system, Xie and Xin \cite{XieXin2010} proved the existence of global subsonic isentropic flows under the assumption that the Bernoulli function at the upstream is sufficiently small. Subsequently, Chen, Deng, and Xiang \cite{Chen2012Global} established the existence and uniqueness of global subsonic solutions to the full Euler equations in general infinitely long nozzles, provided that the oscillations of the entropy and Bernoulli functions at the upstream are sufficiently small in \(C^{1,1}\) and the mass flux lies in a suitable regime. They also studied the asymptotic behavior of the flows at both upstream and downstream, as well as the critical mass flux. Later, Du, Xie, and Xin \cite{DuXieXin2014} established the existence and uniqueness of subsonic flows with large vorticity under a crucial convexity condition (a sign condition on the second derivative of the upstream horizontal velocity). This result was extended to the non-isentropic case by Chen \cite{Chen2016} and to axisymmetric nozzles by Duan and Luo \cite{DuanLuo2015}. A further advance came with  \cite{Chen2019}, in which Chen et al. removed the convexity condition on the upstream data and established the global existence of subsonic flows in general infinitely long nozzles. Their result allows for large vorticity and admits discontinuous structures such as vortex sheets and entropy waves. The subsonic–sonic limit and the incompressible limit were also obtained.

Nevertheless, even in \cite{Chen2019}, certain structural assumptions are still imposed on the upstream data (e.g., monotonicity conditions on the walls, as well as monotonicity and sign conditions near the discontinuities). Related developments on the stability of contact discontinuities can be found in \cite{Bae2013,BaePark2019SIAM,BaePark2019JDE,WengZhang2025}, where additional assumptions such as small perturbations or structural conditions on the upstream data are required. A common feature of these works is the use of the Euler--Lagrange transformation to flatten the contact discontinuity, a method originally introduced in \cite{Chen2006}. The stability of airfoils with an attached contact discontinuity line was further studied in \cite{ChenXinZang2022}.

\medskip

\noindent\textbf{Contributions of this paper.}
	In this paper, we remove all technical assumptions on the upstream data, requiring only the natural subsonicity and positivity of the mass flux; no convexity or monotonicity-type conditions are imposed. We establish the existence and uniqueness of subsonic weak solutions for general piecewise $C^{1,\alpha}$ upstream data. The contact discontinuity line is shown to be of class $C^{2,\alpha}$, in contrast to the Lipschitz regularity obtained in \cite{Chen2019}. Moreover, by constructing a singular barrier function, we establish sharp far-field decay estimates in the downstream direction.
\medskip

\noindent\textbf{Main ideas of the proof.}
We first reformulate the steady compressible Euler system in Lagrangian (or stream function) coordinates, which transforms the original free boundary nozzle problem into a fixed-domain boundary value problem for a single scalar function. In this formulation, the governing equation becomes a quasilinear equation in divergence form whose coefficients are discontinuous across the contact discontinuity. Uniform a priori estimates are then established in H\"older spaces by exploiting the ellipticity structure away from the sonic state and carefully analyzing the behavior near the contact discontinuity line. A fixed point argument is applied to close the nonlinear iteration. 

Crucially, we discover from the structure of the quasilinear elliptic equation that the higher-order H\"older norms of the solution are intrinsically determined by the uniform subsonicity and the absence of stagnation. This key observation lays the foundation for the analysis of the limiting behavior of the solution as $\zeta$ approaches the endpoints of the maximal existence interval.

Finally, we construct a singular barrier function to derive sharp far-field decay estimates. Under the assumption that the nozzle walls decay at the rate $(1+y_1)^{-\beta}$ with $\beta >0$ as $y_1\to+\infty$, we show by the maximum principle that the perturbation $\phi-\phi_r$ decays at the same rate, so that the asymptotic behavior of the downstream flow is completely determined by the boundary perturbations.
\medskip

\noindent\textbf{Organization of the paper.} In Section~\ref{Sec:reformulation}, we introduce the Lagrangian coordinates and reduce the free-boundary Euler problem to a uniformly elliptic fixed-boundary problem. Section~\ref{Sec:proof-main} solves the quasilinear elliptic problem using Schauder estimates and the De Giorgi--Nash theory. In Section~\ref{Sec:main}, we transform the solution back to the physical coordinates and prove the existence theorem. Section~\ref{Sec:regularity} establishes the higher regularity of the solutions via differentiation of the elliptic equation and Schauder estimates. Finally, Section~\ref{Sec:outlet} derives the downstream asymptotics and decay estimates.

\section{Lagrangian formulation and elliptic reduction}\label{Sec:reformulation}

In this section, assuming $\mathbf U = (\mathbf u, p, \rho)$ is a solution to Problem~\ref{Problem:physical}, we derive an elliptic equation for the inverse of the stream function.

\subsection{The Euler-Lagrange transformation}

Define
\begin{equation}\label{Def:T}
	\y = (y_1,y_2)=\mathcal T(x_1,x_2):=\left(x_1,\int_{h_\zeta^-(x_1)}^{x_2}\rho u_1(x_1,s)\dif s-m^-\right).
\end{equation}
Then $\mathcal T$ maps the physical nozzle to the flat strip
\[
\N := \mathbb R \times (-m^-, m^+),
\]
via
\[
\mathcal T(\N_\zeta) = \N,
\]
with the upper and lower subdomains given by
\[
\N^+ := \mathbb R \times (0,m^+), \qquad
\N^- := \mathbb R \times (-m^-,0),
\]
and the contact discontinuity becoming the fixed interface
\[
\Gamma^* := \mathbb R \times \{0\},
\]
while the nozzle walls become
\[
\Gamma^+ := \mathbb R \times \{m^+\}, \qquad
\Gamma^- := \mathbb R \times \{-m^-\}.
\]
Note that $\mathcal T$ preserves the upper/lower orientation, i.e., $\mathcal T(\N_\zeta^\pm)=\N^\pm$.

Applying the same transformation to $\mathbf U(x)$ yields a new vector field $\widetilde{\mathbf U}(\mathbf y)$ on the flat nozzle $\N$ defined by $\widetilde{\mathbf U}(\mathbf y) = \mathbf U(\mathbf x)$ with $\mathbf y = \mathcal T(\mathbf x)$. For simplicity, we still denote it by $\mathbf U(\mathbf y)$. We infer from \cite{Chen2007Transonic} that Eq.~\eqref{Equ:euler} becomes
\begin{equation}\label{Equ:lang}\left\{
	\begin{aligned}
		&\Big(\frac{1}{{\rho} u_1}\Big)_{y_1} - \Big(\frac{u_2}{u_1}\Big)_{y_2} = 0,\\
		&\Big(u_1 + \frac{ p}{{\rho} u_1}\Big)_{y_1} - \Big(\frac{ pu_2}{u_1}\Big)_{y_2} = 0,\\
		&(u_2)_{y_1} + {p}_{y_2} = 0,\\
		&\Big(\frac{1}{2}|\mathbf u|^2 + \frac{\gamma {p}}{(\gamma-1){\rho}}\Big)_{y_1} = 0,
	\end{aligned}\right.
\end{equation}
and the contact discontinuity condition \eqref{Cond:RH-intro} reads
\begin{equation}\label{Cond:RH-y}
	\left[\frac{u_2}{u_1}\right] = [p] = 0 \quad \text{on } \Gamma^*,
\end{equation}
where, for any piecewise continuous function $f$ on $\N$, $[f]$ denotes the jump of $f$ across $\Gamma^*$, defined as in \eqref{Cond:RH-intro} with the obvious identification $\N_\zeta^\pm \leftrightarrow \N^\pm$.

In the Lagrangian variables, the transported quantities in \eqref{Equ:Bernoulli} depend only on $y_2$; explicitly,
\begin{equation}\label{Equ:SB}
	S(y_2) = \frac{\gamma p_0}{(\gamma-1)\rho_\ell(y_2)^\gamma},\qquad 
	B(y_2) = \frac12 u_\ell(y_2)^2 + \frac{\gamma p_0}{(\gamma-1)\rho_\ell(y_2)},
\end{equation}
where we have assumed that $\mathbf U_\ell$ has been transformed into the $\mathbf y$-coordinate system.  Consequently, $S$ and $B$ are positive $C^{1,\alpha}$ functions on $[-m^-,0]$ and $[0,m^+]$, respectively, with a  jump at $y_2=0$.

\subsection{The inverse stream function}

Let $\phi(y_1, y_2) = x_2$. Then, from the definition of the Lagrangian coordinates, we obtain
\begin{equation}\label{Def:velocity-Lag}
	\phi_{y_1}=\frac{u_2}{u_1},
	\qquad
	\phi_{y_2}=\frac1{\rho u_1}>0.
\end{equation}
Since $\mathbf U\in C^{1,\alpha}(\overline{\N_\zeta^+})\cap C^{1,\alpha}(\overline{\N_\zeta^-})$, condition \ref{Cond:p4} in Problem \ref{Problem:physical} implies 
\begin{equation}\label{Est:varphi}
\phi \in C^{2,\alpha}(\overline{\N^+})\cap C^{2,\alpha}(\overline{\N^-})\cap C(\N).
\end{equation}
The slip boundary conditions on the nozzle walls become the Dirichlet conditions
\begin{equation}\label{Cond:phi-wall}
	\phi=h_\zeta^-\quad\text{on }\Gamma^-,
	\qquad
	\phi=h_\zeta^+\quad\text{on }\Gamma^+.
\end{equation}
At the upstream, the corresponding inverse stream function, denoted by $\phi_\ell$, is given by
\[
\phi_\ell(y_2) = \int_{-m^-}^{y_2} \frac{1}{\rho_\ell(s)u_\ell(s)} \dif s - 1
\quad \text{for } y_2 \in [-m^-, m^+].
\]

We next show that the fluid density $\rho$ can be expressed as a smooth function of $S$, $B$, and $\nabla\phi$. To this end, for $\boldsymbol{\xi}\in\mathbb R^2$ with $\xi_2>0$ and $\boldsymbol{\eta}\in\mathbb R_+^2$, we introduce the auxiliary functions
\begin{equation}\label{Def:chi}
	X(\boldsymbol{\xi}) = \frac{\xi_1^2+1}{2\xi_2^2} ,\qquad
	Q(\boldsymbol{\eta},\rho) = \eta_2\rho^2 - \eta_1\rho^{\gamma+1},
\end{equation}
where $\rho$ here denotes a generic density variable. Consider the equation
\begin{equation}\label{Equ:Q}
	X(\boldsymbol{\xi}) = Q(\boldsymbol{\eta},\rho).
\end{equation}
For fixed $\boldsymbol{\eta}$, the graph of $Q$ as a function of $\rho$ is shown in Figure~\ref{fig:Q_curve}.
\begin{figure}[H]
	\centering
	\begin{tikzpicture}[
		declare function={
			gamma = 1.4;
			xi1 = 1;
			xi2 = 1;
			Q(\x) = xi2*\x^2 - xi1*\x^(gamma+1);
			rho_min_num = (2*xi2/(xi1*(gamma+1)))^(1/(gamma-1));
			Q_max_num = xi2*rho_min_num^2 - xi1*rho_min_num^(gamma+1);
			rho_max_num = (xi2/xi1)^(1/(gamma-1));
		},
		every node/.style={font=\tiny}
		]
		\begin{axis}[
			axis lines = middle,
			xlabel = $\rho$,
			ylabel = {$Q$},
			xmin = -0.1, xmax = rho_max_num + 0.1,
			ymin = -0.01, ymax = 0.08,
			domain = 0:rho_max_num,
			samples = 200,
			width = 12cm,
			height = 6cm,
			grid = none,
			xtick = \empty,
			ytick = \empty,
			tick style = {draw = none},
			axis line style = {thick},
			clip = false,
			xlabel style = {at={(ticklabel* cs:1)}, anchor=west, font=\normalsize},
			ylabel style = {at={(ticklabel* cs:1)}, anchor=south, font=\normalsize},
			]
			\addplot [thick, black] {Q(x)};
			
			\pgfmathsetmacro{\rmin}{rho_min_num}
			\pgfmathsetmacro{\rmax}{rho_max_num}
			
			
			\draw [black] (axis cs:\rmin,0) -- (axis cs:\rmin,-0.002);
			\node [below, align=center, font=\small] at (axis cs:\rmin,-0.002) 
			{$\rho_{\rm cr}$};
			
			\draw [black] (axis cs:\rmax,0) -- (axis cs:\rmax,-0.002);
			\node [below, align=center, font=\small] at (axis cs:\rmax,-0.002) 
			{$\rho^+$};
			
			\addplot [only marks, mark=*, black] coordinates {(\rmin, Q_max_num)};
			
			\draw [dashed, gray] (axis cs:\rmin, Q_max_num) -- (axis cs:0, Q_max_num);
			\node [left, align=right,font=\small] at (axis cs:-0.03, Q_max_num) 
			{$Q_{\max}$
			};
			\draw [dashed, gray] (axis cs:\rmin, Q_max_num) -- (axis cs:\rmin, 0);
		\end{axis}
	\end{tikzpicture}
	\vspace{-0.5cm}  
 \caption{Graph of the function $Q$}
\label{fig:Q_curve}
\end{figure}
\noindent
The maximum of $Q$ is attained at
\begin{equation}\label{Def:rho-cr}
	\rho_{\rm cr}(\boldsymbol{\eta}) = \left(\frac{2 \eta_2}{(\gamma+1)\eta_1}\right)^{1/(\gamma-1)},
\end{equation}
with maximum value
\begin{equation}\label{Def:Qmax}
	Q_{\max}(\boldsymbol{\eta})=\frac{\gamma-1}{\gamma+1}\eta_2\left(\frac{2\eta_2}{(\gamma+1)\eta_1}\right)^{2/(\gamma-1)}.
\end{equation}
Moreover, $Q$ vanishes at $\rho=0$ and at $\rho=\rho^+$, where
\begin{equation}
	\rho^+(\boldsymbol{\eta}) = \left(\frac{\eta_2}{\eta_1}\right)^{1/(\gamma-1)}.
\end{equation}
It follows that, for any $(\boldsymbol{\eta},\boldsymbol{\xi})$ lying in the admissible domain
\begin{equation}\label{Def:D}
	\mathcal D=\Big\{\big(\boldsymbol{\eta},\boldsymbol{\xi}\big)\in \mathbb R^4 \;\big|\; \eta_1>0,\ \eta_2>0,\ \xi_2>0,\ X(\boldsymbol{\xi})<Q_{\max}(\boldsymbol{\eta})\Big\},
\end{equation}
Eq.~\eqref{Equ:Q} admits a unique solution $\rho$ in the interval $(\rho_{\rm cr},\rho^+)$, which we call the subsonic branch. This branch is characterized by
\begin{equation}
	\partial_\rho Q(\boldsymbol{\eta},\rho)=2\eta_2\rho-(\gamma+1)\eta_1\rho^\gamma<0.
\end{equation}

Now, taking $(\boldsymbol{\eta},\boldsymbol{\xi})=(S,B,\nabla\phi)$, we see from Bernoulli's law and \eqref{Def:velocity-Lag} that the fluid density $\rho$ under consideration is precisely the solution of \eqref{Equ:Q}, that is,
\begin{equation}\label{Equ:X}
	X(\nabla \phi) = Q(S, B, \rho).
\end{equation}
Moreover, the subsonic condition $M<1$ yields
\begin{equation}\label{Equ:subsonic-derivative}
	\partial_\rho Q(S,B,\rho)=2B\rho-(\gamma+1)S\rho^\gamma=\rho c^2(M-1)<0,
\end{equation}
which means that $\rho$ lies in the subsonic branch, i.e.,
\begin{equation}
	\rho_{\rm cr}(S, B) < \rho < \rho^+(S, B) \text{ in } \N^\pm.
\end{equation} 
The implicit function theorem further implies that $\rho$ depends smoothly on $S$, $B$, and $\nabla\phi$. Consequently, all flow variables can be recovered from $\phi$ via
\begin{equation}\label{Def:flow-recovery}
	\rho=\rho(S,B,\nabla\phi),\qquad
	u_1=\frac1{\rho\phi_{y_2}},\qquad
	u_2=\frac{\phi_{y_1}}{\rho\phi_{y_2}},\qquad
	p=\frac{\gamma-1}{\gamma}S\rho^\gamma.
\end{equation}
Moreover, there exists a constant $c>0$ such that
\begin{equation}\label{Est:abc}
	\phi_{y_2}\geqslant\frac{1}{\sqrt{Q(S,B,\rho_{\rm cr}(S,B))}}>c.
\end{equation}

\subsection{Reformulation to an elliptic problem}

Given $\boldsymbol{\xi} \in \mathbb R^2$ such that $(S,B,\boldsymbol{\xi})\in\mathcal D$, let $\rho=\rho(S,B,\boldsymbol{\xi})$ denote the unique solution of \eqref{Equ:Q} lying in the subsonic branch. Define $\mathbf A=(A^1,A^2)$ by
\begin{equation}\label{Def:A}
	\mathbf A(S,B,\boldsymbol{\xi})=\left(\frac{\xi_1}{\rho(S,B,\boldsymbol{\xi})\xi_2},\ 
	\frac{\gamma-1}{\gamma}S\rho(S,B,\boldsymbol{\xi})^\gamma\right).
\end{equation}
For any admissible $\varphi\in C^{2,\alpha}(\overline{\N^+})\cap C^{2,\alpha}(\overline{\N^-})\cap C(\N)$, i.e. $(S,B,\nabla\varphi)\in\mathcal D$, we define the operator
\begin{equation}
	\Lop(\varphi) = \div_{\y}\Big(\mathbf A\big(S(y_2),B(y_2),\nabla\varphi(\y)\big)\Big).
\end{equation}
Equivalently, in weak form,
\begin{equation}\label{Def:Eop-weak}
	\langle\Lop(\varphi),v\rangle
	:=-\int_{\N}\mathbf A(S,B,\nabla\varphi)\cdot\nabla v\dif \y,
	\qquad v\in C_c^1(\N).
\end{equation}

In view of $\eqref{Equ:lang}_3$ and the Dirichlet boundary conditions \eqref{Cond:phi-wall}, $\phi$ satisfies
\begin{equation}\label{Equ:elliptic}
	\Lop(\phi)=0 \text{ in } \N \qquad\text{and}\qquad \phi = h^\pm_\zeta \text{ on } \Gamma^\pm.
\end{equation}
We claim that \eqref{Equ:elliptic} is uniformly elliptic. Indeed, a direct calculation gives 
\begin{equation}\label{Equ:A^1}
	A^1_{\xi_1}(S, B, \nabla \phi)
	= u_1 \frac{c^2 - u_1^2}{c^2 -|\mathbf u|^2}>0,
\end{equation}
and
\begin{equation}\label{Equ:det-A}
	\det \mathrm D_{\bxi} \mathbf A(S, B,\nabla \phi)
	= \frac{c^2\rho^2u_1^4}{c^2-|\mathbf u|^2}>0.
\end{equation}
By condition \ref{Cond:p4} in Problem \ref{Problem:physical}, the above inequalities hold uniformly in $\N^\pm$. Furthermore, \eqref{Est:varphi} implies that $D_{\bxi}\mathbf A$ are uniformly bounded in $\N^\pm$. This establishes the uniform ellipticity of \eqref{Equ:elliptic}. 

Condition \ref{Cond:p4} further implies that $(S,B,\nabla\phi)$ stays uniformly away from the boundary $\partial\mathcal D$. To see this, recall that $\partial\mathcal D$ is composed of the sets $\eta_1=0$, $\eta_2=0$, $\xi_2=0$, and $X(\boldsymbol{\xi})=Q_{\max}(\boldsymbol{\eta})$. Since $S$ and $B$ are fixed positive functions on $[-m^-,m^+]$, the conditions $\eta_1>0$ and $\eta_2>0$ are automatically satisfied. Moreover, \eqref{Est:abc} implies that $\xi_2$ stays uniformly away from $0$. It remains to verify that $X(\nabla\phi)$ is uniformly away from $Q_{\max}(S,B)$. Indeed, the uniform subsonicity condition implies that $\rho$ is uniformly bounded away from $\rho_{\rm cr}$, hence there exists $\kappa>0$ such that
\[
X(\nabla\phi)=Q(S,B,\rho)\leqslant Q_{\max}(S,B)-\kappa \quad\text{in } \N^\pm.
\]
Thus $(S,B,\nabla\phi)$ is uniformly away from $\partial\mathcal D$.

We extend the above compactness result to the parameter-dependent setting. Let $\mathbf U^{(\zeta)}$ be a solution of Problem~\ref{Problem:physical} for some $\zeta\in\mathbb R$, and let $\phi^{(\zeta)}$ denote the corresponding inverse stream function. Define
\begin{equation}\label{Def:K-zeta}
	\mathcal K^{(\zeta)} = \overline{\Big\{\big(S(y_2), B(y_2), \nabla \phi^{(\zeta)}(\y)\big) \,\big| \, \y =(y_1,y_2)\in \N^\pm\Big\}}.
\end{equation}
When no ambiguity arises, for the fixed solution under consideration we write $\mathcal K:=\mathcal K^{(\zeta)}$. The preceding argument, applied with $\phi=\phi^{(\zeta)}$, shows that $\mathcal K$ is a compact subset of $\mathcal D$.

The following lemma establishes the converse implication, showing that the compactness of $\mathcal K$ in $\mathcal D$ is not only necessary but also sufficient for the physical conditions in \ref{Cond:p4}.

\begin{lemma}\label{Lm:K-equiv}
	Let $\phi \in C^{2,\alpha}(\overline{\N^+})\cap C^{2,\alpha}(\overline{\N^-})\cap C(\N)$ be a solution of \eqref{Equ:elliptic} with corresponding flow variables $\mathbf U=(\mathbf u,\rho,p)$ given by \eqref{Def:flow-recovery}. Then, in Lagrangian coordinates, condition \ref{Cond:p4} is equivalent to $\mathcal K\Subset\mathcal D$.
\end{lemma}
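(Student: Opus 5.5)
Only the implication $\mathcal K\Subset\mathcal D\Rightarrow$\ref{Cond:p4} needs proof, since the forward direction was established in the discussion preceding the lemma. Assume $\mathcal K$ is a compact subset of $\mathcal D$. The first step is to work with continuous functions on $\mathcal D$. For $(\boldsymbol\eta,\boldsymbol\xi)\in\mathcal D$ let $\rho(\boldsymbol\eta,\boldsymbol\xi)\in(\rho_{\rm cr},\rho^+)$ be the subsonic root of \eqref{Equ:Q}. By the implicit function theorem, using $\partial_\rho Q<0$ on the subsonic branch, $\rho$ is smooth on $\mathcal D$. The flow quantities are then continuous functions of $(\boldsymbol\eta,\boldsymbol\xi)$ on $\mathcal D$:
\[
\rho u_1=\frac{1}{\xi_2},\qquad c^2=\gamma\frac{\gamma-1}{\gamma}\eta_1\rho^{\gamma-1},\qquad |\mathbf u|^2=\frac{1+\xi_1^2}{\rho^2\xi_2^2}=\frac{2X(\boldsymbol\xi)}{\rho^2}.
\]
Since $\mathcal K$ is compact, each of these attains its extreme values over $\mathcal K$, and the values of the flow variables at points $\y\in\N^\pm$ are values at points of $\mathcal K$.

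The mass-flux bound is direct. Because $\xi_2>0$ on $\mathcal D$ and $\mathcal K$ is compact, $\xi_2\leqslant C$ on $\mathcal K$. Hence $\rho u_1=1/\phi_{y_2}\geqslant 1/C>0$ in $\N^\pm$. Here I would use \eqref{Def:velocity-Lag} in the form $\phi_{y_2}=1/(\rho u_1)$, which is exactly how $u_1$ is defined through \eqref{Def:flow-recovery}.

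For subsonicity I would use the identity \eqref{Equ:subsonic-derivative}, $\partial_\rho Q(\boldsymbol\eta,\rho)=\rho c^2(M^2-1)$ up to a positive factor. This identity comes from Bernoulli's law $B=\tfrac12|\mathbf u|^2+\tfrac{c^2}{\gamma-1}$ together with $X=Q$, and I would verify it directly: substituting $2B=|\mathbf u|^2+\tfrac{2c^2}{\gamma-1}$ and $(\gamma+1)\eta_1\rho^{\gamma}=\tfrac{\gamma+1}{\gamma-1}\rho c^2$ gives $2B\rho-(\gamma+1)\eta_1\rho^\gamma=\rho(|\mathbf u|^2-c^2)$. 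Thus $M^2=1+\partial_\rho Q/(\rho c^2)$, which is a continuous function on $\mathcal D$. It is strictly less than $1$ everywhere on $\mathcal D$, since the root lies strictly in $(\rho_{\rm cr},\rho^+)$, where $\partial_\rho Q<0$. Its supremum over the compact set $\mathcal K$ is therefore attained and is strictly below $1$. This gives $\sup_{\N^\pm}M<1$.

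The main subtlety is justifying that the pointwise values in $\N^\pm$ really come from the subsonic branch, so that $M<1$ holds pointwise. This is built into the construction: \eqref{Def:flow-recovery} defines $\rho$ as the subsonic root, and $\mathcal K\subset\mathcal D$ guarantees that the root exists and $\rho$ is well-defined. A second point is that $\mathcal K$ is a closure, so it contains limit points, including traces on $\Gamma^*$ and on the walls. Continuity of the functions above on all of $\mathcal D$ handles these without further argument. No elliptic theory is needed; the lemma is purely algebraic plus compactness.
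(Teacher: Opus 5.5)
Your proof is correct and follows the same route as the paper: you take the forward direction from the preceding discussion, obtain the mass-flux bound from $\phi_{y_2}\leqslant C$ on the compact set $\mathcal K$, and get subsonicity because compactness keeps $\rho$ uniformly inside the subsonic branch. The only difference is that the paper passes through a gap $X(\nabla\phi)\leqslant Q_{\max}(S,B)-\kappa$ and then simply asserts that $M$ stays away from $1$, whereas you maximize the continuous function $M^2=1+\partial_\rho Q/(\rho c^2)$ directly over $\mathcal K$; this makes that last step explicit, and you correctly use $M^2-1$ where the paper's identity for $\partial_\rho Q$ writes $M-1$.
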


\begin{proof}
The forward implication has been established above. It remains to prove the converse. Suppose $\mathcal K\Subset\mathcal D$, i.e., $\operatorname{dist}(\mathcal K,\partial\mathcal D)>0$. Since $\mathcal K$ is compact, there exist positive constants $\kappa$ and $C$ such that
\[
X(\nabla\phi)\leqslant Q_{\max}(S,B)-\kappa \quad\text{and}\quad \phi_{y_2}\leqslant C \quad\text{in } \mathcal N^\pm.
\]
Consequently, $\rho$ is uniformly bounded away from $\rho_{\rm cr}$; in particular, $\inf M < 1$. Moreover, $\inf \rho u_1 \geqslant 1/C > 0$.
\end{proof}

Accordingly, instead of solving Problem~\ref{Problem:physical} directly, we consider the following fixed-boundary problem for $\phi$.
\begin{problem}\label{Problem:elliptic}
	For given $\zeta \in \mathbb R$, find 
	\[\phi \in C^{2,\alpha}(\overline{\N^+})\cap C^{2,\alpha}(\overline{\N^-}) \cap C^0(\overline{\N})
	\] such that:
	\begin{enumerate}[label=\textup{(\roman*)}, leftmargin=2em]
		\item \label{Cond:weak-solution} $\phi$ solves Eq.~\eqref{Equ:elliptic};
		\item \label{Cond:del-y-2} $(S, B, \nabla\phi)$ is contained in a compact subset of $\mathcal D$;
		\item \label{Cond:convergence} the upstream asymptotic condition holds:
	\[
			\lim_{y_1\rightarrow -\infty}\|\nabla \phi(y_1,\cdot) - \nabla \phi_\ell\|_{L^\infty([-m^-,m^+])} =0.
		\]
	\end{enumerate}
\end{problem}

\section{Solvability of the elliptic  problem}\label{Sec:proof-main}

The goal of this section is to prove the following theorem, from which Theorem \ref{Thm:main} follows by returning to the physical coordinates.
\begin{theorem}\label{Thm:elliptic}
	There exist $\zeta^+\in \R_+\cup\{+\infty\}$ and $\zeta^- \in \R_-\cup\{-\infty\}$ such that Problem~\ref{Problem:elliptic} admits a unique solution $\phi^{(\zeta)}$ for each $\zeta$ in the interval $(\zeta^-,\zeta^+)$. Furthermore, if an endpoint of this interval is finite, then along any sequence $\zeta_n$ approaching that endpoint, either
	\begin{equation}
		\|\partial_{y_2}\phi^{(\zeta_n)}\|_{0,0;\N^\pm} \to \infty,
		\label{Est:stagnation}
	\end{equation}
	or
	\begin{equation}
		\dist(\mathcal K^{(\zeta_n)}, \partial \mathcal D) \to 0.
		\label{Est:sonic}
	\end{equation}
\end{theorem}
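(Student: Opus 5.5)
A continuation argument in $\zeta$, built on uniform a priori estimates and a fixed point for the linearized problem, is what I would use. For $\zeta=0$ the upstream profile $\phi_\ell(y_2)$ solves Problem~\ref{Problem:elliptic}. Indeed, $\mathbf A(S,B,\nabla\phi_\ell)$ has $A^1=0$ and $A^2=p_0$ constant, so $\div\mathbf A=0$ weakly. The interface conditions \eqref{Cond:RH-y} are encoded in the weak form, since continuity of $\phi$ gives $[u_2/u_1]=[\phi_{y_1}]=0$ and the flux condition gives $[A^2]=[p]=0$.

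Set $I$ to be the set of $\zeta$ for which Problem~\ref{Problem:elliptic} is solvable. Define $\zeta^\pm$ as the endpoints of the connected component of $I$ containing $0$. I would show this component is open and that it is closed in the sense of the stated alternative.

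\textbf{Uniqueness and comparison.} Take two solutions $\phi_1,\phi_2$ with the same $\zeta$. The difference $w=\phi_1-\phi_2$ satisfies the linear divergence-form equation
\[
\div\big(a^{ij}(\y)\partial_j w\big)=0,\qquad a^{ij}=\int_0^1\partial_{\xi_j}A^i\big(S,B,\nabla\phi_2+t\nabla w\big)\dif t,
\]
with $w=0$ on $\Gamma^\pm$ and $\nabla w\to0$ as $y_1\to-\infty$. The convexity of $\mathcal D$ in $\bxi$ needs checking, or else one works along segments staying in $\mathcal D$. Then \eqref{Equ:A^1} and \eqref{Equ:det-A} give uniform ellipticity, with coefficients that are discontinuous only across $\Gamma^*$. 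I would first show $w\to0$ as $y_1\to\pm\infty$. Upstream this follows from the asymptotic condition. Downstream I would use a Phragmén–Lindelöf type argument in the strip, since $w$ is bounded (both $\phi_i$ lie between $h^\pm_\zeta$). The maximum principle then gives $w\equiv0$.

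\textbf{Openness.} Let $\zeta_0\in I$ with solution $\phi_0$. I would linearize $\Lop$ at $\phi_0$, giving the operator $L_0v=\div(\partial_{\bxi}\mathbf A(S,B,\nabla\phi_0)\nabla v)$. This is a transmission problem whose coefficients are $C^{1,\alpha}$ up to $\Gamma^*$ from each side. Its invertibility between the spaces $\{v\in C^{2,\alpha}(\overline{\N^\pm})\cap C(\overline\N):\ v=0 \text{ on } \Gamma^\pm\}$ and the natural data space follows from three ingredients. The first is the maximum principle, as in the uniqueness step. The second is the piecewise Schauder estimates for transmission problems with a flat interface, which I would take from Li–Nirenberg / Li–Vogelius type results or prove by flattening. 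The third is an exhaustion of the strip by finite rectangles together with uniform local estimates; the translation-invariant geometry keeps the constants uniform.

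The implicit function theorem in these Banach spaces, with the boundary data $\zeta W^\pm$ handled by subtracting an extension, then yields solutions for $\zeta$ near $\zeta_0$. The condition $\mathcal K\Subset\mathcal D$ is preserved for small perturbations. The upstream limit is preserved because the perturbation decays upstream, which I would obtain by solving in a weighted space.

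\textbf{Closedness and the alternative.} Suppose $\zeta_n\to\zeta^*$ finite, and suppose both $\|\partial_{y_2}\phi^{(\zeta_n)}\|_{0,0}$ stays bounded and $\dist(\mathcal K^{(\zeta_n)},\partial\mathcal D)\geq\delta>0$ along a subsequence. Then the $\mathcal K^{(\zeta_n)}$ lie in a fixed compact subset of $\mathcal D$. I also need a bound on $|\phi_{y_1}|$, which should follow from $X(\nabla\phi)<Q_{\max}$. Together these make the equation uniformly elliptic with uniform constants.

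Next I would bootstrap to a uniform piecewise $C^{2,\alpha}$ bound; this is the key claim, that higher norms are controlled only by subsonicity and non-stagnation. De Giorgi–Nash applied to $\phi$ and to the difference quotients $\partial_{y_1}\phi$, which satisfy a divergence-form equation with bounded measurable coefficients, gives $C^{1,\beta}$ in the tangential direction. Near $\Gamma^*$ the normal derivative is recovered from $A^2$, which is continuous across $\Gamma^*$ and satisfies $\partial_{y_2}A^2=-\partial_{y_1}A^1$. Solving $A^2=$ given for $\xi_2$ via $\partial_{\xi_2}A^2\neq0$ then yields piecewise $C^{1,\beta}$. Piecewise Schauder theory upgrades this to $C^{2,\alpha}$.

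Arzelà–Ascoli then gives a limit solving Problem~\ref{Problem:elliptic} at $\zeta^*$. The upstream condition needs uniformity, which I would obtain from a uniform upstream decay estimate via barriers. By openness this extends the interval past $\zeta^*$, contradicting maximality. I expect the main obstacle to be the uniform estimate near $\Gamma^*$ with discontinuous coefficients, together with keeping the upstream convergence uniform in $n$.
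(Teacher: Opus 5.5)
Your proposal is correct and has the same architecture as the paper: continuation from $\phi_\ell$ at $\zeta=0$, local solvability near any solvable $\zeta_0$, uniqueness, and a uniform $C^{2,\alpha}$ bound under non-degeneracy that contradicts maximality. The local step, however, uses a different tool.

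The paper never linearizes. It writes $\Lop(\phi)-\Lop(\phi_0)=\partial_{y_i}(a^{(\phi)}_{ij}\partial_{y_j}\psi)$ with coefficients averaged between $\nabla\phi_0$ and $\nabla\phi$. It therefore only solves homogeneous Dirichlet problems: the sup bound comes from the maximum principle, the $R$-independent piecewise $C^{2,\alpha}$ bounds come from the transmission estimate applied to $\psi$ and $\partial_{y_1}\psi$, and $\psi_{y_2y_2}$ is read off from the equation. It then closes with Schauder's fixed-point theorem.

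Your implicit function theorem needs somewhat more linear theory. You must show the linearized transmission operator is surjective onto interior $C^{0,\alpha}$ sources together with interface jumps $[a_{2j}\partial_{y_j}v]$. That requires an $R$-independent sup bound from a barrier depending only on $y_2$, and Schauder estimates with divergence-form sources. In exchange, it gives local uniqueness and $C^1$ dependence on $\zeta$ directly. It also avoids needing compactness of $\mathcal M_\eps$ in global H\"older norms, which fails on the unbounded strip (translate a bump to infinity). A fixed-point argument really has to be run in a local topology or replaced by a contraction estimate.

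Your Phragm\'en--Lindel\"of uniqueness argument is an equally valid substitute for the paper's cutoff energy identity. The convexity you flag does hold: each $\bxi$-slice of $\mathcal D$ is $\{\xi_2>\sqrt{(1+\xi_1^2)/(2Q_{\max})}\}$, the epigraph of a convex function. So segments between admissible gradients keep a uniform margin from $\partial\mathcal D$.

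The a priori bound is the paper's argument in different notation. Your equation for $\partial_{y_1}\phi$ is \eqref{Equ:w_second}. The identities $\partial_{y_2}A^2=-\partial_{y_1}A^1$ and $\partial_{y_1}A^2=\partial_{\bxi}A^2\cdot\nabla w$ together are exactly $\nabla^\perp p=\mathrm D_{\bxi}\mathbf A\,\nabla w$. Since $\partial_{y_1}A^1$ is not a priori bounded, you should state where the H\"older continuity of $p=A^2$ across $\Gamma^*$ comes from. Either use De Giorgi--Nash for the divergence-form equation that $p$ satisfies (the paper's choice), or use Morrey's lemma combined with the Caccioppoli bound $\int_{B_r}|\nabla w|^2\leqslant Cr^{2\beta}$.

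Finally, the upstream condition needs neither a weighted space nor uniformity in $n$. A weighted space would be awkward anyway, since $W^\pm\to0$ at no prescribed rate. The cutoff energy estimate of Proposition~\ref{Pr:asymptotic} applies to $\phi-\phi_\ell$ for any bounded solution satisfying (i)--(ii) of Problem~\ref{Problem:elliptic}, in particular to the limit at $\zeta^*$.
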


Here is a sketch of the proof. For any $\zeta_0\in\mathbb R$ at which Problem~\ref{Problem:elliptic} admits a solution, we first show that there exists a neighborhood of $\zeta_0$ such that for each $\zeta$ in this neighborhood, Problem~\ref{Problem:elliptic} has a unique solution. Since $\phi_\ell$ is a solution at $\zeta=0$, this yields a maximal interval $(\zeta^-,\zeta^+)$ on which a solution exists for every $\zeta$. Finally, we analyze the limiting behavior of $\phi^{(\zeta)}$ as $\zeta$ approaches $\zeta^\pm$.
	
We begin with some notation. Let
\[
\underline{m} := \min(m^-,m^+), \qquad \overline{m} := \max(m^-,m^+).
\]
Given $k\in\mathbb N$ and $f \in C^{k,\alpha}(\overline{\N^+}) \cap C^{k,\alpha}(\overline{\N^-})$, define
\begin{equation*}
	\|f\|_{k,\alpha;\N^\pm} := \|f\|_{k,\alpha;\N^+} + \|f\|_{k,\alpha;\N^-}.
\end{equation*}
For $R>0$, set
\[
\N_R := \N\cap\{|y_1|<R\}, \qquad \N_R^\pm := \N^\pm\cap\{|y_1|<R\},
\]
and
\[
\Gamma_R^\pm := \big\{(y_1,y_2)\in\Gamma^\pm : |y_1|<R\big\}, \qquad
\Gamma_R^* := \big\{(y_1,y_2)\in\Gamma^* : |y_1|<R\big\}.
\]
With $\psi \in C^{k,\alpha}(\overline{\N^+_R}) \cap C^{k,\alpha}(\overline{\N^-_R})$, its norm is given by
\[
\|\psi\|_{k,\alpha;\N^\pm_R} := \|\psi\|_{k,\alpha;\N^+_R} + \|\psi\|_{k,\alpha;\N^-_R}.
\]

Fix $\zeta_0 \in \mathbb R$ such that Problem~\ref{Problem:elliptic} admits a solution $\phi_0$ at $\zeta = \zeta_0$. Let $\delta_0 > 0$ be a small constant to be determined later. For $-\delta_0 < \delta < \delta_0$, consider the system \eqref{Equ:elliptic} with $\zeta = \zeta_0 + \delta$, which can be rewritten as
\begin{equation}\label{Equ:diff-zeta-0}
	\partial_{y_i}\big(a_{ij}^{(\phi)}\partial_{y_j}\psi\big) = \Lop(\phi) - \Lop(\phi_0) = 0 \quad \text{in } \N, \qquad \psi = \delta g \text{ on } \Gamma^\pm,
\end{equation}
where $\psi$ and $a_{ij}^{(\phi)}$ are respectively defined by
\begin{equation}
	\psi = \phi - \phi_0, \qquad 
	a_{ij}^{(\phi)} = \int_0^1 A^i_{\xi_{j}}\big(S, B, \phi_0 + s(\nabla \phi - \nabla \phi_0)\big) \,\dif s .
\end{equation}
For $\mathbf y = (y_1, y_2) \in \overline{\N}$, the function $g(\mathbf y)$ is given by
\begin{equation}\label{Def:g}
	g(\mathbf y)=\chi(y_2)W^-(y_1)+\bigl(1-\chi(y_2)\bigr) W^+(y_1),
\end{equation}
where $\chi$ is an arbitrary smooth function satisfying $\chi = 1$ near $\Gamma^-$ and $\chi = 0$ near $\Gamma^+$. Consequently, $g = W^\pm$ on $\Gamma^\pm$.

\subsection{A priori estimates for the coefficients}
For $\eps>0$, define
\begin{equation}
	\mathcal M_\eps := \Big\{
	\phi \in C^{2,\alpha}(\overline{\N^+}) \cap C^{2,\alpha}(\overline{\N^-}) \cap C(\N)
	\;\big|\;
	\| \phi - \phi_0\|_{2,\alpha;\N^\pm} \leqslant \eps
	\Big\},
\end{equation}
and set
\begin{equation}
	\mathcal K_\eps := \overline{\Big\{
		\big(S(y_2), B(y_2), \nabla \phi(\y)\big) \in \mathbb{R}^4
		\;\big|\;
		\phi \in \mathcal M_\eps,\ \y \in \N^\pm
		\Big\}}.
\end{equation}
Since $\mathcal K^{(\zeta_0)}$ is a compact subset of $\mathcal D$, it follows that for all sufficiently small $\varepsilon>0$, $\mathcal K_\varepsilon \Subset \mathcal D$ as well. The following lemma is a direct consequence of the smoothness of $\mathbf A$ on $\mathcal D$.
\begin{lemma}\label{Lm:N-elliptic}
	There exists $\eps>0$, depending on $\phi_0$, such that for all $\phi \in \mathcal M_\eps$,	the following inequalities
	\begin{equation}
		\partial_{\xi_1}A^1\big(S(y_2), B(y_2), \nabla \phi(\y)\big) > 0 , \quad \det \mathrm D_{\bxi}\mathbf A\big(S(y_2), B(y_2), \nabla \phi(\y)\big) > 0
	\end{equation}
	hold uniformly for $\y = (y_1, y_2) \in \N^\pm$. Moreover,
	\begin{equation}\label{Est:N}
		\|\mathrm D_{\boldsymbol{\xi}} \mathbf A\big(S, B, \nabla \phi\big) - \mathrm D_{\boldsymbol{\xi}} \mathbf A\big(S, B, \nabla \phi_0\big)\|_{1,\alpha;\N^\pm} \leqslant C\| \phi- \phi_0\|_{2,\alpha;\N^\pm},
	\end{equation}
	where $C$ depends on the  $S$, $B$, and $\nabla \phi_0$.
\end{lemma}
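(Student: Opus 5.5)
The argument is a compactness step followed by a Taylor/chain-rule estimate. Because $\mathcal K^{(\zeta_0)}$ is a compact subset of the open set $\mathcal D$, we can fix $d:=\tfrac12\dist(\mathcal K^{(\zeta_0)},\partial\mathcal D)>0$. Define the closed $d$-neighborhood
\[
\mathcal K^d:=\{z\in\mathbb R^4:\dist(z,\mathcal K^{(\zeta_0)})\leqslant d\}.
\]
It is compact and contained in $\mathcal D$. For $\phi\in\mathcal M_\eps$ we have $|\nabla\phi-\nabla\phi_0|\leqslant\eps$ pointwise in $\N^\pm$, and the $(S,B)$ components are identical. Hence every point of $\mathcal K_\eps$ lies within distance $\eps$ of $\mathcal K^{(\zeta_0)}$. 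Choosing $\eps\leqslant d$ gives $\mathcal K_\eps\subset\mathcal K^d$.

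\textbf{The ellipticity inequalities.} On $\mathcal D$ the density $\rho(S,B,\bxi)$ lies on the subsonic branch, where $\partial_\rho Q<0$. It is therefore smooth by the implicit function theorem, and so is $\mathbf A$. The identities \eqref{Equ:A^1} and \eqref{Equ:det-A} hold pointwise on $\mathcal D$, since they are algebraic consequences of the definitions with $M<1$. Thus $\partial_{\xi_1}A^1$ and $\det\mathrm D_{\bxi}\mathbf A$ are continuous and strictly positive on $\mathcal D$. They attain positive minima on the compact set $\mathcal K^d$, and these minima bound the quantities from below uniformly for $\phi\in\mathcal M_\eps$ and $\y\in\N^\pm$.

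\textbf{The estimate \eqref{Est:N}.} Set $F:=\mathrm D_{\bxi}\mathbf A$, which is smooth on $\mathcal D$. Write
\[
F(S,B,\nabla\phi)-F(S,B,\nabla\phi_0)=\int_0^1\mathrm D_{\bxi}F\big(S,B,\nabla\phi_0+t\nabla\psi\big)\dif t\;\nabla\psi,\qquad \psi=\phi-\phi_0.
\]
The segment between $\nabla\phi_0$ and $\nabla\phi$ stays in $\mathcal K^d$, because every point on it is within $\eps$ of $\mathcal K^{(\zeta_0)}$. So the integrand is a smooth function $G$ composed with $(S,B,\nabla\phi_0+t\nabla\psi)$. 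We bound the $C^{1,\alpha}(\overline{\N^\pm})$ norm of the product by $\|G(\cdots)\|_{1,\alpha}\,\|\nabla\psi\|_{1,\alpha}$, using that $C^{1,\alpha}$ is an algebra on each of $\N^+$ and $\N^-$ separately.

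The composition factor is handled by the chain rule. Its derivative is $\partial_\eta G\cdot(S',B')+\partial_\xi G\cdot\nabla^2(\cdots)$. We need a $C^{0,\alpha}$ bound on this, which requires $C^{0,\alpha}$ control of $S',B'$, of $\nabla^2\phi_0$ and $\nabla^2\psi$, and of $G$ together with its first and second derivatives on $\mathcal K^d$. These bounds hold because $S,B\in C^{1,\alpha}$ on each closed piece, $\|\psi\|_{2,\alpha}\leqslant\eps\leqslant1$, and $\mathcal K^d$ is compact. The constant depends only on $S$, $B$ and $\|\phi_0\|_{2,\alpha;\N^\pm}$.

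\textbf{Where care is needed.} This is the only step requiring attention. We must work on each subdomain separately, since $S$ and $B$ jump across $\Gamma^*$. We must also keep track of the regularity: $S$ and $B$ are only $C^{1,\alpha}$, so $\mathrm D_{\bxi}\mathbf A$ can be $C^{1,\alpha}$ but no better. That is exactly the norm claimed, so the argument closes.
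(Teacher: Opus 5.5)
Your proposal is correct and follows the paper's route. The paper only remarks that $\mathcal K_\eps\Subset\mathcal D$ for small $\eps$, because $\mathcal K^{(\zeta_0)}\Subset\mathcal D$, and then calls the lemma a direct consequence of the smoothness of $\mathbf A$ on $\mathcal D$; your compact neighborhood $\mathcal K^d$, the positivity of the explicit formulas for $\partial_{\xi_1}A^1$ and $\det\mathrm D_{\bxi}\mathbf A$ on all of $\mathcal D$, and the mean-value/chain-rule bound carried out separately on $\N^+$ and $\N^-$ supply exactly the details the paper leaves implicit.
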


	With Lemma~\ref{Lm:N-elliptic} at our disposal, we are ready to estimate coefficients $a_{ij}^{(\phi)}$.
	
	\begin{proposition}\label{Pr:a-ij-uniform}
		Let $\eps$ be given by Lemma~\ref{Lm:N-elliptic}. There exist constants $0<\lambda\leqslant\Lambda<\infty$ such that for every $\phi\in \mathcal M_\eps$,
		\begin{equation}\label{Est:a-ij-uniform}
			a_{ij}^{(\phi)}\eta_i\eta_j > \lambda|\boldsymbol{\eta}|^2 \quad \text{for all } \boldsymbol{\eta} = (\eta_1,\eta_2)\in\mathbb{R}^2
			\text{ and } 
			\|a_{ij}^{(\phi)}\|_{1,\alpha;\N^\pm} \leqslant \Lambda.
		\end{equation}
	\end{proposition}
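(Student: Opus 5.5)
The plan is to view $a_{ij}^{(\phi)}$ as an average of $\mathrm D_{\bxi}\mathbf A$ along the segment joining $\nabla\phi_0$ to $\nabla\phi$. I read the argument of $A^i_{\xi_j}$ in its definition as $\nabla\phi_0+s(\nabla\phi-\nabla\phi_0)$. Both estimates should then follow from the compactness of $\mathcal K_\eps$ in $\mathcal D$ and Lemma~\ref{Lm:N-elliptic}.

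\textbf{Step 1: the segment stays in $\mathcal K_\eps$.} For $\phi\in\mathcal M_\eps$ and $s\in[0,1]$, set $\phi_s:=\phi_0+s(\phi-\phi_0)$. Then $\|\phi_s-\phi_0\|_{2,\alpha;\N^\pm}=s\|\phi-\phi_0\|_{2,\alpha;\N^\pm}\leqslant\eps$, so $\phi_s\in\mathcal M_\eps$, since $\mathcal M_\eps$ is convex. Hence $(S,B,\nabla\phi_s)(\y)\in\mathcal K_\eps\Subset\mathcal D$ for all $\y\in\N^\pm$ and $s\in[0,1]$. All later bounds therefore reduce to bounds for smooth functions on the fixed compact set $\mathcal K_\eps$. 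On this set $\mathbf A$ and its derivatives of every order are bounded, and $A^1_{\xi_1}$ and $\det\mathrm D_{\bxi}\mathbf A$ are bounded below by a positive constant, by continuity and Lemma~\ref{Lm:N-elliptic}.

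\textbf{Step 2: uniform ellipticity.} Positivity of $A^1_{\xi_1}$ and of the determinant gives positive definiteness of the quadratic form only if $\mathrm D_{\bxi}\mathbf A$ is symmetric. I would therefore first check that $A^1_{\xi_2}=A^2_{\xi_1}$. Differentiate $X(\bxi)=Q(S,B,\rho)$ implicitly to get
\[
\rho_{\xi_1}=\frac{\xi_1}{\xi_2^2\,\partial_\rho Q},\qquad \rho_{\xi_2}=-\frac{\xi_1^2+1}{\xi_2^3\,\partial_\rho Q}.
\]
Then compare $A^2_{\xi_1}=c^2\rho_{\xi_1}$ with $A^1_{\xi_2}=-\frac{\xi_1}{\rho\xi_2^2}-\frac{\xi_1\rho_{\xi_2}}{\rho^2\xi_2}$, using $\partial_\rho Q=2B\rho-(\gamma+1)S\rho^\gamma$ together with \eqref{Equ:Q}. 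I expect the two to coincide, reflecting that $\mathbf A$ is the $\bxi$-gradient of a Lagrangian; the paper's own ellipticity claim after \eqref{Equ:det-A} presupposes this. With symmetry, a symmetric $2\times2$ matrix with $m_{11}\geqslant c_0$, $\det\geqslant c_0$ and entries bounded by $C_0$ has smallest eigenvalue at least $\det/\mathrm{tr}\geqslant c_0/(2C_0)$. This holds pointwise at every $(S,B,\nabla\phi_s)$. Integrating in $s$ preserves the lower bound, which gives $a_{ij}^{(\phi)}\eta_i\eta_j\geqslant\lambda|\boldsymbol\eta|^2$ with $\lambda$ independent of $\phi$. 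If symmetry unexpectedly fails, I would instead bound the symmetric part directly on $\mathcal K_\eps$, shrinking $\eps$ so that it is close to the symmetric part at $\nabla\phi_0$. This step, verifying the structure behind ellipticity, is the main obstacle.

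\textbf{Step 3: the $C^{1,\alpha}$ bound.} On each of $\N^+$ and $\N^-$ separately, $S,B\in C^{1,\alpha}$ and $\nabla\phi_s\in C^{1,\alpha}$ with norm at most $\|\phi_0\|_{2,\alpha;\N^\pm}+\eps$. The chain rule and the standard composition estimate for $C^{1,\alpha}$ functions, with $\mathrm D_{\bxi}\mathbf A$ smooth and its derivatives up to second order bounded on $\mathcal K_\eps$, give $\|\mathrm D_{\bxi}\mathbf A(S,B,\nabla\phi_s)\|_{1,\alpha;\N^\pm}\leqslant\Lambda$ uniformly in $s$ and $\phi$. Alternatively one can take \eqref{Est:N} applied to $\phi_s$ and add the fixed norm at $\phi_0$. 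Finally, differentiation and Hölder quotients commute with $\int_0^1\dif s$, so the same bound holds for $a_{ij}^{(\phi)}$.
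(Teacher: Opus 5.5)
Your proposal is correct and follows the paper's own proof. Convexity of $\mathcal M_\eps$ puts every $\phi_0+s(\phi-\phi_0)$ in $\mathcal M_\eps$, Lemma~\ref{Lm:N-elliptic} gives the ellipticity, and \eqref{Est:N} (or your composition estimate) gives the $C^{1,\alpha}$ bound; the symmetry $A^1_{\xi_2}=A^2_{\xi_1}$ that you flag, which the paper uses tacitly, does hold, since substituting $(\xi_1^2+1)/\xi_2^2=\rho^2|\mathbf u|^2$ and $\partial_\rho Q=\rho(|\mathbf u|^2-c^2)$ into your expression gives $A^1_{\xi_2}=\xi_1c^2/(\xi_2^2\,\partial_\rho Q)=A^2_{\xi_1}$, so your $\det/\mathrm{tr}$ bound closes the argument.
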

	
	\begin{proof}
		Observe that $\phi_0 +t( \phi -  \phi_0)$ belongs to $\mathcal M_\eps$ for all $t \in(0,1)$. It follows from Lemma~\ref{Lm:N-elliptic} that there exists $\lambda>0$ such that 
		\begin{equation}
		a_{ij}^{(\phi)}\eta_i\eta_j > \lambda|\boldsymbol{\eta}|^2.
		\end{equation}
		Moreover, from \eqref{Est:N} we deduce
		\begin{equation}
			\|a_{ij}^{(\phi)} - a_{ij}^{(\phi_0)}\|_{1,\alpha;\N^\pm}
			\leqslant C\| \phi -  \phi_0\|_{2,\alpha;\N^\pm}
			\leqslant C\eps.
		\end{equation}
		Choosing
		\[
	\Lambda = \max\big( \|a_{ij}^{(\phi_0)}\|_{1,\alpha;\N^\pm} + C\varepsilon, \lambda \big)
		\]
		completes the proof of Proposition~\ref{Pr:a-ij-uniform}.
	\end{proof}
	
\subsection{Uniform estimates on truncated domains}

Let $\phi \in \mathcal{M}_\varepsilon$ and $R > 0$. We consider the linearized elliptic problem on the truncated domain $\mathcal{N}_R$:
\begin{equation}\label{Equ:app}
	\partial_{y_i}\big(a_{ij}^{(\phi)} \partial_{y_j}\psi\big) = 0 \quad \text{in } \mathcal{N}_R,
	\qquad
	\psi = \delta g \quad \text{on } \partial \mathcal{N}_R.
\end{equation}

\begin{proposition}\label{Pr:app-estimate}
	Suppose $\phi \in \mathcal{M}_\varepsilon$ and $R > 2\overline{m}$. Then \eqref{Equ:app} admits a unique weak solution
	$\psi \in H^1(\mathcal{N}_R)$ satisfying the uniform estimate
	\begin{equation}\label{Est:psi-r}
		\|\psi\|_{2,\alpha;\mathcal{N}_{R-2\overline{m}}^\pm}
		\leqslant C |\delta|\, \|g\|_{2,\alpha;\mathcal{N}_R},
	\end{equation}
	where $C$ is independent of $R$ and $\phi$.
\end{proposition}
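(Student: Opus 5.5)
Existence and uniqueness of a weak solution in $H^1(\N_R)$ come from Lax--Milgram applied to $\psi-\delta g\in H^1_0(\N_R)$. By Proposition~\ref{Pr:a-ij-uniform} the coefficients $a_{ij}^{(\phi)}$ are bounded and uniformly elliptic with constant $\lambda$, so the bilinear form $\int a_{ij}\partial_j w\,\partial_i v$ is coercive on $H^1_0(\N_R)$; the right-hand side is $-\partial_i(a_{ij}\partial_j(\delta g))\in H^{-1}$. The substance of the statement is that the constant in \eqref{Est:psi-r} does not depend on $R$ or $\phi$, so every constant below may involve only $\lambda$, $\Lambda$, $\alpha$ and $m^\pm$.

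\textbf{Step 1: an $R$-independent $L^\infty$ bound.} The weak maximum principle for divergence-form operators (e.g.\ Gilbarg--Trudinger Thm.~8.1) gives
\[
\|\psi\|_{L^\infty(\N_R)}\le \sup_{\partial\N_R}|\delta g|\le |\delta|\,\|g\|_{0,0;\N_R}.
\]
This uses only ellipticity and boundedness of $a_{ij}$, and it holds even though the coefficients jump across $\Gamma^*$.

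\textbf{Step 2: local Schauder estimates on unit cells.} Fix $y^0$ with $|y_1^0|<R-2\overline m$, and take a box $B$ of size comparable to $\underline m$ around it. Since $R-2\overline m+2\overline m=R$, the doubled box still lies in $\N_R$, away from the lateral sides $\{|y_1|=R\}$. Three cases arise.
\begin{enumerate}[label=(\alph*)]
\item The box is interior to $\N^+$ or $\N^-$: use interior $C^{2,\alpha}$ Schauder estimates for the operator written in nondivergence form.
\item The box meets $\Gamma^\pm$: use boundary Schauder estimates with Dirichlet data $\delta g$. We need $a_{ij}\in C^{1,\alpha}$ on each side, which Proposition~\ref{Pr:a-ij-uniform} supplies.
\item The box meets $\Gamma^*$: this is a transmission problem. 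On each side $\N^\pm$ the coefficients are $C^{1,\alpha}$ up to $\Gamma^*$, $\psi$ is continuous across $\Gamma^*$, and the conormal flux $a_{2j}\partial_j\psi$ is continuous (weakly). Piecewise-$C^{1,\alpha}$ regularity for divergence-form equations with coefficients piecewise H\"older across a flat interface (Li--Vogelius / Li--Nirenberg type, or a direct reflection/Campanato argument) gives
\[
\|\psi\|_{1,\alpha}\lesssim \|\psi\|_{L^\infty}+|\delta|\|g\|_{1,\alpha}.
\]
To reach $C^{2,\alpha}$ I would differentiate tangentially. Because the interface is flat, $\partial_{y_1}\psi$ solves an equation of the same form with divergence right-hand side $-\partial_i(\partial_1 a_{ij}\partial_j\psi)$, where $\partial_1 a_{ij}\in C^{0,\alpha}$ on each side. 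The same transmission estimate then gives $\partial_1\psi\in C^{1,\alpha}$ on each side. Finally $\partial_{22}\psi$ is recovered from the equation, using $a_{22}\ge\lambda$.
\end{enumerate}

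\textbf{Step 3: assembling the bound.} All local constants depend only on $\lambda,\Lambda,\alpha$ and the box size, and the $C^{2,\alpha}$ norm on a strip is a supremum of local norms. Combining Steps 1 and 2 therefore gives \eqref{Est:psi-r} uniformly in $R$ and $\phi$.

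The main obstacle is case (c): the $C^{2,\alpha}$ estimate up to $\Gamma^*$ from both sides. The cleanest route is to first get the piecewise $C^{1,\alpha}$ estimate, then the tangential-derivative bootstrap. Uniformity must be tracked carefully, since $\Lambda$ controls only one-sided norms. A subtler point is that the tangential differentiation must be justified at the weak level, by difference quotients in $y_1$, which is fine because the interface is $y_1$-invariant.
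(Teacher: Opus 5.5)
Your proposal is correct and follows the same route as the paper. Both arguments get an $R$-independent $L^\infty$ bound from the weak maximum principle. Both then use local piecewise $C^{1,\alpha}$ estimates near $\Gamma^*$ (transmission) and near $\Gamma^\pm$ (boundary Schauder), assembled over cells of fixed size comparable to $\underline m$. Finally, both differentiate tangentially in $y_1$, using that $\Gamma^*$ is flat, and recover $\partial_{y_2}^2\psi$ from the equation via $a_{22}\geqslant\lambda$. The only differences are citations and bookkeeping: the paper uses Lemma~2.2 of \cite{Chen2024Elliptic} for the flat-interface estimate and also applies the tangential-derivative step near $\Gamma^\pm$, where you invoke $C^{2,\alpha}$ boundary Schauder directly.
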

	
	\begin{proof}
			By Proposition~\ref{Pr:a-ij-uniform}, the coefficients $a_{ij}^{(\phi)}$ are uniformly elliptic and satisfy
		\[
		 a_{ij}^{(\phi)} \xi_i \xi_j \geqslant \lambda |\xi|^2, \quad 	\|a_{ij}^{(\phi)}\|_{1,\alpha;\mathcal{N}^\pm}
		\leqslant\Lambda,
		\]
		with constants $\lambda$ and $\Lambda$ independent of $R$ and $\phi$.
		
		Hence, by standard elliptic theory (cf. \cite{Gilbarg1997Elliptic}, Theorems 8.1 and 8.3), there exists a unique weak solution
		$\psi \in H^1(\mathcal{N}_R)$. Moreover, the maximum principle yields
		\begin{equation}\label{Est:app-Linfty}
			\|\psi\|_{0,0;\mathcal{N}_R} \leqslant|\delta|\,\|g\|_{0,0;\mathcal{N}_R}.
		\end{equation}
		
		We now derive estimates independent of $R$.
		
		\medskip
		\noindent
		\textbf{Step 1. $C^{1,\alpha}$ estimate.}
		We claim that
		\begin{equation}\label{Est:app-C1}
			\|\psi\|_{1,\alpha;\N_{R-\overline m}^{\pm}}
			\leqslant C|\delta|\,\|g\|_{1,\alpha;\N_R}.
		\end{equation}
		The proof proceeds by deriving local estimates and then applying a covering argument. Let $B_r^\pm(\y) := B_r(\y)\cap \mathcal{N}^\pm$. For $\y\in\Gamma^*_{R-\overline m}$, Lemma~2.2 in
		\cite{Chen2024Elliptic} gives
		\begin{equation}\label{Est:app-contact-C1}
			\|\psi\|_{1,\alpha;B_{\underline m/2}^{\pm}(\y)}
			\leqslant C\|\psi\|_{0,0;B_{\underline m}(\y)},
		\end{equation}
	and for $\y^\pm\in\Gamma^\pm_{R-\overline m}$, standard boundary Schauder estimates (see
Corollary~8.36 in
\cite{Gilbarg1997Elliptic} and its subsequent remark)  give
\begin{equation}\label{Est:app-wall-C1}
	\|\psi\|_{1,\alpha;B_{m^\pm-\underline m/4}^{\pm}(\y^\pm)}
	\leq
	C\Big(
	\|\psi\|_{0,0;B_{m^\pm}^{\pm}(\y^\pm)}
	+|\delta|\|g\|_{1,\alpha;B_{m^\pm}^{\pm}(\y^\pm)}
	\Big).
\end{equation}

		Observe that, for every
		$\mathbf y=(y_1,y_2)\in \N^+_{R-\overline m}$, either
		\[
		B_{\underline m/8}^+(\mathbf y)
		\subset B_{\underline m/2}^+\big((y_1,0)\big),
		\]
		or
		\[
		B_{\underline m/8}^+(\mathbf y)
		\subset B_{m^+-\underline m/4}^+\big((y_1,m^+)\big),
		\]
		therefore, by combining \eqref{Est:app-Linfty}, \eqref{Est:app-contact-C1}, and \eqref{Est:app-wall-C1}, we obtain
		\begin{equation}
			\|\psi\|_{1,\alpha; B^+_{\underline m/8}(\y)} \leqslant C|\delta|\|g\|_{1,\alpha; \N_R} \text{ for }\y \in \N^+_{R-\overline m},
		\end{equation}
		and the same argument applied in $\N^-$ yields
	\begin{equation}
	\|\psi\|_{1,\alpha; B^-_{\underline m/8}(\y)} \leqslant C|\delta|\|g\|_{1,\alpha; \N_R} \text{ for }\y \in \N^-_{R-\overline m}.
\end{equation}

		Then, using a standard local-to-global covering argument with the fixed
		radius $\underline m/8$, we obtain \eqref{Est:app-C1}.
		
		\medskip
		\noindent
	\textbf{Step 2. Estimates for tangential derivatives.}
		Let $w := \partial_{y_1}\psi$. Differentiating \eqref{Equ:app} with respect to $y_1$, we obtain
	\begin{equation}\label{Equ:app-w}
		\partial_{y_i}\big(a_{ij}^{(\phi)} \partial_{y_j} w\big)
		=
		\partial_{y_i}F_i,
		\qquad
		F_i := -\partial_{y_1}a_{ij}^{(\phi)}\,\partial_{y_j}\psi.
	\end{equation}
	On $\partial\mathcal{N}_R$, we have $w = \delta \partial_{y_1}g$.

	 For any $ \y\in\Gamma^*_{R-2\overline m}$, applying Lemma~2.2 in
	\cite{Chen2024Elliptic} again, we deduce that
	\begin{equation*}
		\|w\|_{1,\alpha;B_{\underline m/2}^{\pm}(\y)}
		\leqslant C\big(\|w\|_{0,0;B_{\underline m}(\y)}+\|F\|_{0,\alpha;B_{\underline m}(\y)}\big),
	\end{equation*}
	and for any $\y^\pm\in\Gamma^\pm_{R-2\overline m}$, it follows from Corollary~8.36 in \cite{Gilbarg1997Elliptic} that
	\begin{equation*}
		\|w\|_{1,\alpha;B_{m^\pm-\underline m/4}^{\pm}(\y^\pm)}
		\leqslant
		C\big(
		\|w\|_{0,0;B_{m^\pm}^{\pm}(\y^\pm)} + \|F\|_{0,\alpha;B_{m^\pm}^{\pm}(\y^\pm)}
		+|\delta|\|g\|_{1,\alpha;B_{m^\pm}^{\pm}(\y^\pm)}
		\big). 
	\end{equation*}
	Then, by repeating the
		same covering argument as in Step~1, we obtain
		\begin{equation}\label{Est:w-C1}
			\|w\|_{1,\alpha;\N_{R-2\overline m}^{\pm}}
			\leqslant C|\delta|\,\|g\|_{2,\alpha;\N_R}.
		\end{equation}
		Since $w=\psi_{y_1}$, this implies
		\begin{equation}\label{Est:psi-11-12}
			\|\partial_{y_i}\partial_{y_j}\psi\|_{0,\alpha;\N_{R-2\overline m}^{\pm}}
			\leqslant C|\delta|\,\|g\|_{2,\alpha;\N_R},
		\end{equation}
		provided $j\neq2$.
		
		\medskip
		\noindent
		\textbf{Step 3. Estimates for normal second derivatives.}
		Rewriting  \eqref{Equ:app}  as
		\begin{equation}\label{Equ:psi-22}
			a_{22}^{(\phi)}\psi_{y_2y_2}
			=
			-a_{11}^{(\phi)}\partial_{y_1}\partial_{y_1}\psi
			-2a_{12}^{(\phi)}\partial_{y_1}\partial_{y_2}\psi
			-\partial_{y_i}a_{ij}^{(\phi)}\,\partial_{y_j}\psi.
		\end{equation}
		Observe that the right-hand side is controlled in
		$C^\alpha(\N_{R-2\overline m}^{\pm})$ by
		\eqref{Est:app-C1} and \eqref{Est:psi-11-12}, and therefore \eqref{Est:psi-11-12} also holds for $j=2$, and this completes the proof of Proposition \ref{Pr:app-estimate}.
	\end{proof}
	
	\subsection{Compactness and fixed-point argument}
	For a fixed $\phi \in \mathcal M_\eps$, consider the linear boundary value problem
	\begin{equation}\label{Equ:app-phi}
		\partial_{y_i}\big(a_{ij}^{(\phi)}\partial_{y_j} \psi\big) = 0 \quad \text{in } \N, \quad
			\psi = \delta g \quad \text{on } \Gamma^\pm.
	\end{equation}
	
	\begin{proposition}\label{Pr:linear}
		Suppose $\phi \in \mathcal M_\eps$. Then \eqref{Equ:app-phi} admits a unique solution 
		\[\psi^{(\phi)} \in C^{2,\alpha}(\overline{\N^+}) \cap C^{2,\alpha}(\overline{\N^-}) \cap C(\N).
		\] Moreover,
		\begin{equation}\label{Est:psi-2alpha}
			\|\psi^{(\phi)}\|_{2,\alpha;\N^\pm} \leqslant C |\delta|\|g\|_{2,\alpha;\N}.
		\end{equation}
	\end{proposition}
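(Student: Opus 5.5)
Existence will come from the truncated problems \eqref{Equ:app} on $\N_R$ together with the $R$-independent bound \eqref{Est:psi-r} of Proposition~\ref{Pr:app-estimate}, followed by a limit $R\to\infty$. For each $R=R_k\to\infty$ (with $R_k>2\overline m$), let $\psi_R\in H^1(\N_R)$ be the unique weak solution given by Proposition~\ref{Pr:app-estimate}. For any fixed $R_0$, once $R_k>R_0+2\overline m$ we have
\[
\|\psi_{R_k}\|_{2,\alpha;\N^\pm_{R_0}}\leqslant C|\delta|\,\|g\|_{2,\alpha;\N},
\]
with $C$ independent of $k$ and $\phi$. Arzel\`a--Ascoli on each $\overline{\N^\pm_{R_0}}$ and a diagonal argument then yield a subsequence converging in $C^{2}(\overline{\N^\pm_{R_0}})$ for every $R_0$ to some $\psi$. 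The limit is continuous across $\Gamma^*$, since each $\psi_R\in H^1$ with piecewise $C^{2,\alpha}$ regularity has no jump. It equals $\delta g$ on $\Gamma^\pm$. It satisfies the weak formulation of \eqref{Equ:app-phi}, because for a test function $v\in C^1_c(\N)$ the support lies in some $\N_{R_0}$ and $a_{ij}^{(\phi)}\partial_j\psi_{R_k}\to a_{ij}^{(\phi)}\partial_j\psi$ uniformly there. Lower semicontinuity of H\"older seminorms under local uniform convergence gives \eqref{Est:psi-2alpha} on every $\N_{R_0}^\pm$, hence on $\N^\pm$.

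Uniqueness is the more delicate point on the unbounded strip, since the maximum principle on $\N_R$ does not directly control the behavior at $y_1=\pm\infty$. I would work in the class of bounded solutions, which suffices because any solution in $C^{2,\alpha}(\overline{\N^\pm})$ is bounded. Let $\Psi$ be the difference of two solutions; it is bounded, continuous, solves the homogeneous divergence-form equation weakly in $\N$, and vanishes on $\Gamma^\pm$. I would use a barrier exploiting the finite width $m^-+m^+$ of the strip. Take $\Phi(\y)=\cosh(\mu y_1)\,\cos\bigl(\nu(y_2-c_0)\bigr)$, with $c_0=(m^+-m^-)/2$ the midpoint and $\nu$ chosen so that $\nu(m^++m^-)/2<\pi/2$, making $\Phi$ bounded below by a positive constant. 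Choosing $\mu$ small relative to $\nu$, ellipticity should give $\partial_i(a_{ij}\partial_j\Phi)\le 0$, but this requires $a_{ij}\in C^1$ across $\Gamma^*$, which fails.

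The main obstacle is therefore the discontinuity of $a_{ij}$ across $\Gamma^*$. The easiest route avoids classical barriers. Since $\Psi$ is bounded, Step~1 of Proposition~\ref{Pr:app-estimate} gives $\nabla\Psi$ bounded as well. I would then use an energy argument with a cutoff $\eta_R(y_1)$: test the equation with $\Psi\eta_R^2$, and apply the Poincar\'e inequality in $y_2$, valid since $\Psi=0$ on $\Gamma^\pm$. This yields the Saint-Venant type inequality
\[
E(R)\leqslant C\bigl(E(R+1)-E(R)\bigr),\qquad E(R)=\int_{\N_R}|\nabla\Psi|^2 \dif \y ,
\]
so $E(R)\geqslant c\,e^{\kappa R}E(1)$ with $\kappa>0$. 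On the other hand $E(R)\leqslant CR$ by the boundedness of $\nabla\Psi$. Hence $E\equiv 0$, so $\Psi$ is constant, and since it vanishes on $\Gamma^\pm$, $\Psi\equiv0$. By uniqueness, the whole family $\psi_{R_k}$ converges, not just a subsequence.
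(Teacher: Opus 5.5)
Your proposal is correct and follows essentially the same route as the paper. Existence comes from solving the truncated problems \eqref{Equ:app}, invoking the $R$-independent bound \eqref{Est:psi-r} and passing to the limit via Arzel\`a--Ascoli; uniqueness comes from testing the homogeneous equation with a $y_1$-cutoff times the difference of two solutions. The only variation is how you close the uniqueness step. The paper uses boundedness of $\overline{\psi}$ and $\nabla\overline{\psi}$ to get finite total energy and then lets $L\to\infty$ in \eqref{Est:L}. You instead add the Poincar\'e inequality in $y_2$ to get $E(R)\leqslant C\bigl(E(R+1)-E(R)\bigr)$ and play exponential growth against $E(R)\leqslant CR$. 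This works equally well and even gives uniqueness among solutions with subexponential energy growth.
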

	
\begin{proof}
	\medskip\noindent\textbf{Step 1: Existence.}
	For $R>2\overline{m}$, let $\psi^{R}\in H^1(\N_{R})$ be the unique solution of \eqref{Equ:app} satisfying
	\begin{equation}\label{Est:psi-m}
		\|\psi^{R}\|_{2,\alpha;\N_{R-2\overline{m}}^\pm}\leqslant C|\delta|\|g\|_{2,\alpha;\N_{R}} \leqslant C|\delta|\|g\|_{2,\alpha;\N}.
	\end{equation}
	By the Arzel\`a--Ascoli theorem, there exists a subsequence (still denoted $\psi^{R}$) converging to $\psi^{(\phi)}\in H^1_{\mathrm{loc}}(\N)$ such that 
	\[
	\psi^{R}\to\psi^{(\phi)} \text{ in } C_{loc}(\overline{\N}),\qquad \psi^{R}\to\psi^{(\phi)} \text{ in } C^2_{loc}(\overline{\N^+}) \cap C^2_{loc}(\overline{\N^-}).
	\]
	Moreover $\psi^{(\phi)}$ inherits \eqref{Est:psi-m}, namely
	\begin{equation}\label{Est:psi-tmp}
		\|\psi^{(\phi)}\|_{2,\alpha;\N_{R-\overline{m}}^\pm}\leqslant C|\delta|\|g\|_{2,\alpha;\N},
	\end{equation}
	which implies \eqref{Est:psi-2alpha} since $C$ is independent of $R$.
	
	\medskip\noindent\textbf{Step 2: Uniqueness.}
	Let $\psi^i\in C^{2,\alpha}(\overline{\N^+})\cap C^{2,\alpha}(\overline{\N^-})\cap C^0(\N)$, $i=1,2$, be two solutions and set $\overline{\psi}=\psi^1-\psi^2$. Then $\overline{\psi}=0$ on $\Gamma^\pm$, so it suffices to show $\nabla\overline{\psi}=0$ in $\N^\pm$. Let $\eta$ be a smooth cutoff function on $\mathbb{R}$ such that $\eta=1$ for $|y_1|\leqslant L$, $\eta=0$ for $|y_1|\geqslant L+1$, and $|\eta'|\le2$. Then $\eta\overline{\psi}\in H_0^1(\N)$ and hence
	\[
	\int_{\N} a_{ij}\partial_{y_j}\overline{\psi}\,\partial_{y_i}(\eta\overline{\psi})=0.
	\]
	This yields
	\begin{equation}\label{Est:L}
		\int_{[-L,L]\times[-m^-,m^+]}|\nabla\overline{\psi}|^2
		\leqslant C\int_{\big([-L-1,-L]\cup[L,L+1]\big)\times[-m^-,m^+]}|\overline{\psi}|\,|\nabla\overline{\psi}|.
	\end{equation}
	Since the right-hand side is uniformly bounded with respect to $L$, we deduce \[\int_{\N}|\nabla\overline{\psi}|^2<\infty.
	\]
	 Letting $L\to\infty$ in \eqref{Est:L} gives $\nabla\overline{\psi}=0$ in $\N^\pm$.
\end{proof}

We next show that Eq.~\eqref{Equ:diff-zeta-0} is uniquely solvable. 
\begin{proposition}\label{Pr:existence-varphi}
	There exists $\delta_0>0$ such that for any $\zeta \in (\zeta_0-\delta_0,\zeta_0+\delta_0)$, Eq.~\eqref{Equ:diff-zeta-0} admits a unique solution
	\[\phi=\phi^{(\zeta)} \in  C^{2,\alpha}(\overline{\N^+}) \cap C^{2,\alpha}(\overline{\N^-}) \cap C(\N).
	\] Moreover,
	\begin{equation}\label{Est:varphi-2alpha}
		\| \phi^{(\zeta)}- \phi_0\|_{2,\alpha;\N^\pm} \leqslant C |\zeta - \zeta_0|\|g\|_{2,\alpha;\N}.
	\end{equation}
\end{proposition}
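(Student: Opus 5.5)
Existence will come from a contraction mapping on $\mathcal M_\eps$, with Proposition~\ref{Pr:linear} providing the solution operator. For $\phi\in\mathcal M_\eps$ and $\delta=\zeta-\zeta_0$, set
\[
\mathcal F(\phi):=\phi_0+\psi^{(\phi)},
\]
where $\psi^{(\phi)}$ is the unique solution of \eqref{Equ:app-phi}. A fixed point $\phi=\mathcal F(\phi)$ is exactly a solution of \eqref{Equ:diff-zeta-0}. To see this, note that
\[
\partial_{y_i}\big(a^{(\phi)}_{ij}\partial_{y_j}(\phi-\phi_0)\big)=\Lop(\phi)-\Lop(\phi_0),
\]
which follows from the fundamental theorem of calculus along the segment $\phi_0+s(\phi-\phi_0)$. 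The boundary values are also correct: on $\Gamma^\pm$ we have $\phi=\phi_0+\delta W^\pm=h^\pm_{\zeta}$.

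\textbf{Invariance.} By \eqref{Est:psi-2alpha},
\[
\|\mathcal F(\phi)-\phi_0\|_{2,\alpha;\N^\pm}\le C|\delta|\,\|g\|_{2,\alpha;\N},
\]
and $C$ does not depend on $\phi\in\mathcal M_\eps$. Choosing $\delta_0$ with $C\delta_0\|g\|_{2,\alpha;\N}\le\eps$ gives $\mathcal F(\mathcal M_\eps)\subset\mathcal M_\eps$. Continuity across $\Gamma^*$ is inherited from $\psi^{(\phi)}$.

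\textbf{Contraction.} Take $\phi^1,\phi^2\in\mathcal M_\eps$ and put $\bar\psi=\psi^{(\phi^1)}-\psi^{(\phi^2)}$. Then
\[
\partial_i\big(a^{(\phi^1)}_{ij}\partial_j\bar\psi\big)=-\partial_i\Big(\big(a^{(\phi^1)}_{ij}-a^{(\phi^2)}_{ij}\big)\partial_j\psi^{(\phi^2)}\Big)\ \text{in }\N,\qquad \bar\psi=0\ \text{on }\Gamma^\pm .
\]
I will rerun the argument of Propositions~\ref{Pr:app-estimate} and \ref{Pr:linear} with a divergence-form right-hand side $\partial_iF_i$. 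The $L^\infty$ bound now comes from the maximum principle for equations with $\partial_iF_i$ (Theorem~8.16 of \cite{Gilbarg1997Elliptic}) on truncated domains. The interface estimate is Lemma~2.2 of \cite{Chen2024Elliptic} with $F$, and the wall estimate is Corollary~8.36. Differentiating in $y_1$ and then solving for $\bar\psi_{y_2y_2}$ as before yields
\[
\|\bar\psi\|_{2,\alpha;\N^\pm}\le C\|F\|_{1,\alpha;\N^\pm},\qquad F_i=\big(a^{(\phi^1)}_{ij}-a^{(\phi^2)}_{ij}\big)\partial_j\psi^{(\phi^2)} .
\]
The smoothness of $\mathbf A$ on the compact set $\mathcal K_\eps$, as in Lemma~\ref{Lm:N-elliptic}, gives $\|a^{(\phi^1)}-a^{(\phi^2)}\|_{1,\alpha}\le C\|\phi^1-\phi^2\|_{2,\alpha}$. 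Combined with \eqref{Est:psi-2alpha} for $\psi^{(\phi^2)}$, this yields
\[
\|\mathcal F(\phi^1)-\mathcal F(\phi^2)\|_{2,\alpha;\N^\pm}\le C|\delta|\,\|g\|_{2,\alpha;\N}\,\|\phi^1-\phi^2\|_{2,\alpha;\N^\pm}.
\]
Shrinking $\delta_0$ makes this a contraction. $\mathcal M_\eps$ is complete, since it is a closed ball in a Banach space, so Banach's fixed point theorem gives a fixed point. The estimate \eqref{Est:varphi-2alpha} is then exactly the invariance bound.

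\textbf{Uniqueness.} Let $\phi^1,\phi^2$ be two solutions. For solutions in $\mathcal M_\eps$, the contraction already gives uniqueness. For uniqueness within the regularity class in the statement, set $\bar\phi=\phi^1-\phi^2$. It satisfies the linear equation $\partial_i(\tilde a_{ij}\partial_j\bar\phi)=0$, with $\tilde a_{ij}$ the averaged coefficients along the segment $\phi^2+s(\phi^1-\phi^2)$, and $\bar\phi=0$ on $\Gamma^\pm$. The cutoff energy argument from Step~2 of Proposition~\ref{Pr:linear} then gives $\bar\phi\equiv0$. This needs only that $\tilde a_{ij}$ be uniformly elliptic and bounded. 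That holds when both solutions lie in $\mathcal M_\eps$, which is convex. For general admissible solutions, I would restrict uniqueness to the local class $\mathcal M_\eps$.

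\textbf{Main obstacle.} The hard step is the contraction estimate with a divergence-form source that is discontinuous across $\Gamma^*$. It requires that the interface Schauder estimate of \cite{Chen2024Elliptic} include a $\|F\|_{0,\alpha}$ term for the first derivatives, and that the $y_1$-differentiated equation retain this structure, with the source $\partial_{y_1}F$ controlled by $\|F\|_{1,\alpha}$. This is the same bookkeeping already carried out in Step~2 of Proposition~\ref{Pr:app-estimate}, so I expect it to go through.
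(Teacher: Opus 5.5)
\textbf{Different route, and where it breaks.} The paper applies Schauder's fixed point theorem to the same map $\mathcal F(\phi)=\phi_0+\psi^{(\phi)}$. It gets continuity of $\mathcal F$ from the uniform bound \eqref{Est:psi-2alpha} together with uniqueness for the linear problem, so it never needs a Lipschitz estimate for $\phi\mapsto\psi^{(\phi)}$. Your contraction does need one, namely $\|\bar\psi\|_{2,\alpha;\N^\pm}\leqslant C\|F\|_{1,\alpha;\N^\pm}$ on the infinite strip, and this is where the gap is.

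The problem is not the interface Schauder bookkeeping you flag; the paper already uses Lemma~2.2 of \cite{Chen2024Elliptic} with a divergence-form source. It is the zeroth-order bound. Theorem~8.16 of \cite{Gilbarg1997Elliptic} gives $\sup|u|\leqslant\sup_{\partial\Omega}|u|+C\lambda^{-1}\|F\|_{L^q(\Omega)}$ with $C$ depending on $|\Omega|$. On $\N_R$ both $|\N_R|$ and $\|F\|_{L^q(\N_R)}$ grow with $R$, and on $\N$ itself the theorem does not apply. In Step~1 of Proposition~\ref{Pr:app-estimate} this never arises, because the source vanishes and the maximum principle has constant one. Without an $R$-independent bound on $\|\bar\psi\|_{0,0}$, the local estimates near $\Gamma^*$ and $\Gamma^\pm$ cannot be patched into a global $C^{1,\alpha}$ bound, so your contraction inequality is not established.

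\textbf{How to close it.} The missing ingredient must use the bounded width of the strip. For instance, take $v=K\big(e^{C_1\overline m}-e^{C_1|y_2|}+1\big)$, the barrier of Section~\ref{Sec:outlet} with $\beta=0$. For $C_1$ large, $\partial_i(a_{ij}\partial_jv)\leqslant-\lambda C_1^2K/2$ in $\N^\pm$ and $[a_{2j}\partial_jv]\leqslant-2\lambda C_1K$ on $\Gamma^*$. Since $F$ is piecewise $C^1$, your source is a bounded function in $\N^\pm$ plus a line density of size $|[F_2]|$ on $\Gamma^*$. Choosing $K=C(\|\nabla F\|_{0,0;\N^\pm}+\|F\|_{0,0;\N^\pm})$ makes $v\pm\bar\psi^R$ weak supersolutions that are nonnegative on $\partial\N_R$, so $|\bar\psi^R|\leqslant v$ uniformly in $R$. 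An exponentially weighted energy estimate also works: test with $\bar\psi e^{-\mu|y_1-t|}$, use Poincar\'e's inequality in $y_2$, then apply local De~Giorgi--Nash--Moser bounds.

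With this added your argument closes, and it then has an advantage over the paper's. Banach's theorem needs only completeness of the $C^{2,\alpha}$ ball. By contrast, $\mathcal M_\eps$ is compact in $C^{2,\alpha/2}$ only locally on the unbounded strip, not in the global norm the paper invokes.

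\textbf{Uniqueness.} You need not restrict uniqueness to $\mathcal M_\eps$. The matrix $\mathrm D_{\bxi}\mathbf A$ is symmetric, and each $\bxi$-slice $\{\xi_2>\sqrt{(1+\xi_1^2)/(2Q_{\max})}\}$ of $\mathcal D$ is convex. Hence, for any two solutions with $(S,B,\nabla\phi^i)$ in a compact subset of $\mathcal D$, the averaged coefficients are uniformly elliptic. The energy argument then gives uniqueness in the full class, which is what the paper's uniqueness step implicitly uses.
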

\begin{proof}
From \eqref{Est:psi-2alpha}, we deduce that there exists $\delta_0>0$, depending on $\eps$, such that for all $-\delta_0<\delta<\delta_0$,
\begin{equation}
	\|\psi^{(\phi)}\|_{2,\alpha;\N^\pm}\leqslant \eps,
\end{equation}
which allows us to define an iteration mapping $\mathcal F:\mathcal M_\eps\to\mathcal M_\eps$ by
\begin{equation}
	\mathcal F(\phi)=\psi^{(\phi)}+\phi_0.
\end{equation}

We claim that $\mathcal F$ is continuous in the $C^{2,\alpha/2}(\overline{\N^+})\cap C^{2,\alpha/2}(\overline{\N^-})\cap C(\overline{\N})$ topology. Let $\{\phi_k\}\subset \mathcal M_\eps$ converge to $\phi$ in $C^{2,\alpha/2}(\overline{\N^+})\cap C^{2,\alpha/2}(\overline{\N^-})\cap C(\overline{\N})$. Set $\psi_k:=\mathcal F(\phi_k)-\phi_0$ and $\psi:=\mathcal F(\phi)-\phi_0$. By \eqref{Est:psi-2alpha}, the sequence $\{\psi_k\}$ is uniformly bounded in $C^{2,\alpha}(\overline{\N^+})\cap C^{2,\alpha}(\overline{\N^-})$. Hence every subsequence has a further subsequence converging in $C^{2,\alpha/2}(\overline{\N^+})\cap C^{2,\alpha/2}(\overline{\N^-})\cap C(\overline{\N})$ to some limit $\psi^*$ with $\psi^*+\phi_0\in\mathcal M_\eps$. Moreover, Lemma~\ref{Lm:N-elliptic} implies that $a_{ij}^{(\phi_k)}\to a_{ij}^{(\phi)}$ in $C^{2,\alpha/2}(\overline{\N^+})\cap C^{2,\alpha/2}(\overline{\N^-})\cap C(\overline{\N})$. Consequently, both $\psi$ and $\psi^*$ satisfy
\begin{equation}
	\partial_{y_i}\big(a_{ij}^{(\phi)}\partial_{y_j}\psi\big)=0 \quad \text{in } \N,\quad \psi=\delta g \text{ on } \Gamma^\pm,
\end{equation}
and thus, by uniqueness, $\psi^*=\psi$. This proves the continuity of $\mathcal F$ in $C^{2,\alpha/2}(\overline{\N^+})\cap C^{2,\alpha/2}(\overline{\N^-})\cap C(\overline{\N})$. Since $\mathcal M_\eps$ is convex and compact in $C^{2,\alpha/2}(\overline{\N^+})\cap C^{2,\alpha/2}(\overline{\N^-})\cap C(\overline{\N})$, Schauder's fixed point theorem yields a fixed point $\phi=\psi+\phi_0$ of $\mathcal F$, which solves \eqref{Equ:diff-zeta-0}. 

Uniqueness follows directly: if $\phi^1,\phi^2\in C^{2,\alpha}(\overline{\N^+}) \cap C^{2,\alpha}(\overline{\N^-}) \cap C(\N)$ are two solutions of \eqref{Equ:diff-zeta-0}, set $\overline{\psi}=\phi^2-\phi^1$. Then $\overline{\psi}$ satisfies
\begin{equation}
		\partial_{y_i}\big({a}_{ij}\partial_{y_j}\overline{\psi}\big)=0  \text{ in } \N, \quad
		\overline{\psi}=0  \text{ on } \Gamma^\pm,
\end{equation}
where
\[
{a}_{ij}=\int_0^1 A^i_{\xi_j}\big(S,B,\nabla \phi^1+t\nabla(\phi^2-\phi^1)\big)\,dt.
\]
Following the proof of Proposition~\ref{Pr:linear}, we conclude that $\overline{\psi}\equiv 0$. This completes the proof of Proposition~\ref{Pr:existence-varphi}.
\end{proof}

The solution obtained in Proposition~\ref{Pr:existence-varphi} satisfies conditions  \ref{Cond:weak-solution} and \ref{Cond:del-y-2} in Problem~\ref{Problem:elliptic}. 

\subsection{Upstream asymptotic behavior}
	
We now verify condition \ref{Cond:convergence} in Problem~\ref{Problem:elliptic}.

	\begin{proposition}\label{Pr:asymptotic}
		Let $\phi \in \mathcal M_\eps$ be the solution constructed in Proposition \ref{Pr:existence-varphi}. Then
		\begin{equation}
			\lim_{y_1\to -\infty}\|\nabla \phi(y_1,\cdot)-\nabla \phi_\ell\|_{L^\infty([-m^-,m^+])}=0.
		\end{equation}
	\end{proposition}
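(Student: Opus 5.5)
The plan is a compactness-and-Liouville argument on translates of the solution. First we fix what has to be shown: that the solution $\phi$ converges to $\phi_\ell$ upstream. Since $W^\pm(y_1)\to 0$ as $y_1\to-\infty$, the boundary data $h_\zeta^\pm$ tend to $\pm1$, which are exactly the boundary values of $\phi_\ell$. Also, $\phi_\ell$ solves $\Lop(\phi_\ell)=0$ in $\N$, because it depends only on $y_2$ and $A^2(S,B,\nabla\phi_\ell)=p_0$ is constant. We argue by contradiction. Suppose there are a sequence $y_1^{(k)}\to-\infty$ and points $y_2^{(k)}$ with $|\nabla\phi(y_1^{(k)},y_2^{(k)})-\nabla\phi_\ell(y_2^{(k)})|\geqslant\sigma>0$. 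Define the translates $\phi_k(y_1,y_2):=\phi(y_1+y_1^{(k)},y_2)$. Because $\phi\in\mathcal M_\eps$, the $\phi_k$ are uniformly bounded in $C^{2,\alpha}(\overline{\N^\pm})$, and $(S,B,\nabla\phi_k)$ stays in the compact set $\mathcal K_\eps\Subset\mathcal D$. By Arzel\`a--Ascoli, after passing to a subsequence, $\phi_k\to\phi_\infty$ in $C^{2}_{loc}(\overline{\N^\pm})\cap C_{loc}(\overline{\N})$ and $y_2^{(k)}\to\bar y_2$. The limit $\phi_\infty$ is a weak solution of $\Lop(\phi_\infty)=0$ in $\N$ with $\phi_\infty=\pm1$ on $\Gamma^\pm$. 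It is piecewise $C^{2,\alpha}$, its data stay in a compact subset of $\mathcal D$, and $|\nabla\phi_\infty(0,\bar y_2)-\nabla\phi_\ell(\bar y_2)|\geqslant\sigma$, where the one-sided limits are taken if $\bar y_2=0$.

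The key step is a Liouville-type uniqueness result: $\phi_\infty=\phi_\ell$. Set $\bar\psi=\phi_\infty-\phi_\ell$. It satisfies $\partial_{y_i}(a_{ij}\partial_{y_j}\bar\psi)=0$ with $a_{ij}=\int_0^1 A^i_{\xi_j}(S,B,\nabla\phi_\ell+t\nabla\bar\psi)\,dt$ and $\bar\psi=0$ on $\Gamma^\pm$. We need this segment to remain in the region where Lemma~\ref{Lm:N-elliptic} gives uniform ellipticity. This holds if we choose the convex neighborhood $\mathcal M_\eps$ around $\phi_0$ and note that $\phi_\ell$ is itself a limit of translates of $\phi_0$ in the same class. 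If that is not clear, we can instead use the explicit structure of $\mathcal D$: the ellipticity of $\mathrm D_{\bxi}\mathbf A$ holds on all of $\mathcal D$, and it suffices that the segment stays in $\mathcal D$. Then we need the convexity of the $\boldsymbol\xi$-section $\{X(\boldsymbol\xi)<Q_{\max}\}\cap\{\xi_2>0\}$. This section is convex because the condition reduces to $\sqrt{\xi_1^2+1}<\sqrt{2Q_{\max}}\,\xi_2$, which is a cone condition. So the coefficients are uniformly elliptic with bounded $C^{1,\alpha}$ norms on $\N^\pm$, and $\bar\psi$ is bounded. The energy argument from Step~2 of the proof of Proposition~\ref{Pr:linear} then gives $\nabla\bar\psi\equiv0$, hence $\bar\psi\equiv0$, which contradicts the choice of $\sigma$.

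The main obstacle I expect is justifying this uniqueness step rigorously. Two points need care. The first is the uniform ellipticity along the interpolating segment, which I plan to settle through the cone convexity just described. The second is that the energy estimate \eqref{Est:L} needs $\bar\psi$ and $\nabla\bar\psi$ to be bounded, not just locally integrable. Both bounds follow from the uniform $C^{2,\alpha}$ bounds that $\phi_\infty$ inherits and from the smoothness of $\phi_\ell$. The remaining ingredient is the convergence itself. The local $C^{2,\alpha}$ bounds give convergence in $C^{2,\alpha/2}$ up to the interface on each side, and this convergence justifies passing to the limit in the weak formulation. It also shows that the gradient discrepancy at $(0,\bar y_2)$ persists in the limit, including when $y_2^{(k)}$ approaches the interface $\Gamma^*$.
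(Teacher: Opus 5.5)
Your argument is correct, but it takes a different route from the paper.

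\textbf{The paper's route.} The paper works quantitatively with $\psi=\phi-\phi_0$. It tests $\partial_{y_i}(a_{ij}^{(\phi)}\partial_{y_j}\psi)=0$ against $\eta^2\psi$, using a cutoff equal to $1$ on $[-L-1,-L]$ whose transition layers have width $\sigma_L\to\infty$, tuned to the decay of $W^\pm$ on $(-\infty,-L/2)$. This gives $\int_{[-L-1,-L]\times[-m^-,m^+]}|\nabla\psi|^2\leqslant C\sigma_L^{-1}$, which the $C^{2,\alpha}$ bound upgrades to $\|\nabla\psi(y_1,\cdot)\|_{L^\infty}\to0$. The conclusion then follows because $\phi_0$, being a solution of Problem~\ref{Problem:elliptic}, already satisfies condition~\ref{Cond:convergence}.

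\textbf{Your route.} You pass to limits of translates and prove a Liouville-type uniqueness theorem for the flat limit problem with boundary values $\pm1$.

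\textbf{What each buys.} The paper never needs the global geometry of $\mathcal D$, since the segment from $\phi_0$ to $\phi$ lies in the convex set $\mathcal M_\eps$ by construction, and it yields an explicit $L^2$ rate on unit strips. However, it inherits condition~\ref{Cond:convergence} from $\phi_0$ along the continuation. Your first justification of ellipticity uses that same input: translates of $\phi_0+t(\phi-\phi_0)\in\mathcal M_\eps$ converge to $\phi_\ell+t(\phi_\infty-\phi_\ell)$, and $\mathcal K_\eps$ is closed. Your second justification removes that dependence. It shows that condition~\ref{Cond:convergence} holds automatically, once $W^\pm\to0$, for every bounded piecewise $C^{2,\alpha}$ solution with $(S,B,\nabla\phi)$ in a compact subset of $\mathcal D$. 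The price is that the argument is purely qualitative.

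\textbf{Two precision points for the convexity route.} First, the $\bxi$-section $\{\xi_2>0,\ \xi_1^2+1<2Q_{\max}\xi_2^2\}$ is not a cone. It is the strict epigraph of the convex function $\xi_1\mapsto\sqrt{(\xi_1^2+1)/(2Q_{\max})}$, i.e.\ the region above a hyperbola branch. Convexity is all you use, so this does not affect the argument. Second, a \emph{uniform} ellipticity constant needs compactness of the set of interpolated points. Consider all $(\boldsymbol\eta,(1-t)\bxi+t\bxi')$ with $(\boldsymbol\eta,\bxi)$ in the closure of the values of $(S,B,\nabla\phi_\ell)$, $(\boldsymbol\eta,\bxi')\in\mathcal K_\eps$, and $t\in[0,1]$. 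This set is a continuous image of a compact set, hence compact, and it lies in $\mathcal D$ by the convexity of the sections. On it, $\mathrm D_{\bxi}\mathbf A$ is positive definite, by the explicit formulas for $A^1_{\xi_1}$ and $\det\mathrm D_{\bxi}\mathbf A$. This uses that $\mathrm D_{\bxi}\mathbf A$ is symmetric, which holds because one checks $\partial_{\xi_2}A^1=\partial_{\xi_1}A^2$.
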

	
\begin{proof}
	
	For $L>0$, define
	\begin{equation}
		\sigma_L =
		\Big(
		\|W^+\|_{L^\infty(-\infty,-L/2)}^{1/2}
		+
		\|W^-\|_{L^\infty(-\infty,-L/2)}^{1/2}
		+
		(L/2)^{-1}
		\Big)^{-1},
	\end{equation}
	which implies, by \eqref{Def:omega}, that $\sigma_L \to \infty$ as $L\to\infty$.
	
Let $\eta$ be a smooth cutoff function on $\N$ with the properties that
\begin{equation}
	\begin{cases}
		\eta = 1 & \text{for } y_1 \in [-L-1,-L],\\[2pt]
		\eta = 0 & \text{for } y_1 \leqslant -L-1-\sigma_L \ \text{or } y_1 \geqslant -L+\sigma_L,\\[2pt]
		|\nabla \eta| \leqslant 2\sigma_L^{-1} & \text{in } \N.
	\end{cases}
\end{equation}
		Then
	\begin{equation}
		\begin{split}
			\int_{\N} a_{ij}^{(\phi)} \,\partial_{y_j}\psi \,\partial_{y_i}\psi \,\eta^2
			&=
			\Big(\int_{\Gamma^+}-\int_{\Gamma^-}\Big)
			a_{2j}^{(\phi)} \,\partial_{y_j}\psi \,\psi\,\eta^2 \\
			&\quad
			-2 \int_{\N}
			a_{ij}^{(\phi)} \,\partial_{y_j}\psi \, \psi\, \partial_{y_i}\eta \,\eta.
		\end{split}
	\end{equation}
	
	Using the uniform ellipticity
	\[
	a_{ij}^{(\phi)} \,\partial_{y_i}\psi \,\partial_{y_j}\psi
	\geqslant \lambda |\nabla \psi|^2
	\quad \text{in } \N^\pm,
	\]
	together with
	\[
	\big|a_{2j}^{(\phi)} \,\partial_{y_j}\psi \,\psi\,\eta^2 \big| \leqslant C|\delta||g|
	\leqslant C \sigma_L^{-2}
	\quad \text{on } \Gamma^\pm \cap \{y_1 < -L/2\},
	\]
	and
	\[
	2\big|a_{ij}^{(\phi)} \,\partial_{y_j}\psi \, \psi\,\partial_{y_i}\eta \,\eta\big|
	\leqslant \frac{\lambda}{2} |\nabla \psi|^2 + C \sigma_L^{-2},
	\]
	we obtain
	\begin{equation}\label{Est:decay-l}
		\int_{[-L-1,-L]\times[-m^-,m^+]} |\nabla \psi|^2
		\leqslant C \sigma_L^{-1},
	\end{equation}
	where $C$ is independent of $L$.
	
	We claim from \eqref{Est:decay-l} that
	\begin{equation}
		\lim_{y_1\to -\infty}
		\|\nabla \psi(y_1,\cdot)\|_{L^\infty([-m^-,m^+])}
		= 0.
	\end{equation}
	Indeed, suppose by contradiction that there exist $\tau>0$ and a sequence $\y^n = (y_1^n,y_2^n)\in \N^+$ with $y_1^n\to -\infty$ such that
	\[
	|\nabla \psi(\y^n)|>\tau.
	\]
	Since $\psi$ is uniformly bounded in $C^{2,\alpha}(\overline{\N^+})$, there exists $r>0$ independent of $n$ such that
	\[
	|\nabla \psi(\y)|>\tau/2
	\quad \text{for } \y \in B_r(\y^n) \cap \N^+,
	\]
  	which yields
	\begin{equation}
		\int_{[y_1^n-1,y_1^n]\times[0,m^+]}
		|\nabla \psi|^2
		\geqslant c \tau^2 r^2,
	\end{equation}
	for some $c>0$, contradicting \eqref{Est:decay-l}. The same argument applies to the case $\y^n\in \N^-$ (with $[0,m^+]$ replaced by $[-m^-,0]$). Hence the claim holds, and the proposition follows.
	
\end{proof}

\subsection{Proof of Theorem~\ref{Thm:elliptic} (continued)}

So far, we have established that whenever Problem~\ref{Problem:elliptic} admits a solution for some $\zeta_0$, it also admits a unique solution for every $\zeta$ in a neighborhood of $\zeta_0$. We therefore define $I = (\zeta^-,\zeta^+)$ as the maximal open interval containing $0$ on which a solution exists. Since $\phi_\ell$ is a solution at $\zeta=0$, this interval is nonempty.

We now investigate the asymptotic behavior of $\phi^{(\zeta)}$ as $\zeta$ approaches an endpoint of $I$. Without loss of generality, we focus on the right endpoint $\zeta^+$; the left endpoint $\zeta^-$ can be treated in a completely analogous manner.

\begin{proposition}\label{Pr:asymptotic-zeta}
	If $\zeta^+ < \infty$, then for any sequence $\zeta_n \in (0, \zeta^+)$ with $\zeta_n \to \zeta^+$, either
	\begin{equation}\label{Est:del-2-phi}
		\|\partial_{y_2}\phi^{(\zeta_n)}\|_{0,0;\N^\pm} \to \infty
	\end{equation}
	or
	\begin{equation}\label{Est:K}
		\dist(\mathcal K^{(\zeta_n)}, \partial \mathcal D) \to 0.
	\end{equation}
\end{proposition}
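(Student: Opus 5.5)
The plan is to argue by contradiction. Suppose that for some sequence $\zeta_n\to\zeta^+<\infty$ neither \eqref{Est:del-2-phi} nor \eqref{Est:K} holds. After passing to a subsequence, we may then assume $\|\partial_{y_2}\phi^{(\zeta_n)}\|_{0,0;\N^\pm}\leqslant C_0$ and $\dist(\mathcal K^{(\zeta_n)},\partial\mathcal D)\geqslant\kappa_0>0$. The goal is to show that $\phi^{(\zeta_n)}$ converges to a solution of Problem~\ref{Problem:elliptic} at $\zeta=\zeta^+$. Proposition~\ref{Pr:existence-varphi} and Proposition~\ref{Pr:asymptotic}, applied with $\zeta_0=\zeta^+$, would then give solutions on a neighbourhood of $\zeta^+$, contradicting the maximality of $I$.

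\textbf{Step 1: a fixed compact set.} First I check that all $\mathcal K^{(\zeta_n)}$ lie in one compact set $\mathcal K'\Subset\mathcal D$. The distance bound keeps $\xi_2=\partial_{y_2}\phi^{(\zeta_n)}$ away from $0$, and $C_0$ bounds it above. Since $S$ and $B$ are fixed, $Q_{\max}(S,B)$ is bounded, and $X(\bxi)<Q_{\max}$ gives $\xi_1^2\leqslant 2\xi_2^2Q_{\max}$, so $\xi_1$ is bounded too. Consequently $\mathrm D_{\bxi}\mathbf A$ and all its derivatives are uniformly bounded on $\mathcal K'$, and the ellipticity bounds \eqref{Equ:A^1}, \eqref{Equ:det-A} hold with constants independent of $n$.

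\textbf{Step 2: uniform $C^{2,\alpha}$ bounds (the main obstacle).} The a priori information is only that $\nabla\phi^{(\zeta_n)}$ is bounded. To upgrade this to a H\"older bound I would proceed as follows.
\begin{itemize}
\item Differentiate $\Lop(\phi)=0$ in $y_1$; this is legitimate because $S$ and $B$ depend only on $y_2$. Then $w=\partial_{y_1}\phi^{(\zeta_n)}$ solves the divergence-form equation $\partial_{y_i}\big(A^i_{\xi_j}(S,B,\nabla\phi)\partial_{y_j}w\big)=0$ with boundary data $\zeta_nW^{\pm\prime}$.
\item The coefficients of this equation are merely bounded, measurable and uniformly elliptic across $\Gamma^*$. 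De Giorgi--Nash--Moser, in the interior and up to $\Gamma^\pm$, therefore gives $\|w\|_{C^\beta}\leqslant C$ uniformly in $n$.
\item For the normal derivative I would use the structure $\div(u_2,p)=0$ with $p$ continuous across $\Gamma^*$. This gives $\partial_{y_2}p=-\partial_{y_1}A^1$, where $A^1=u_2$ is a smooth function of $(S,B,w,\partial_{y_2}\phi)$, so $p$ inherits H\"older continuity from the equation. I would then recover $\partial_{y_2}\phi$ from $p=A^2(S,B,w,\partial_{y_2}\phi)$ by the implicit function theorem on $\mathcal K'$, using $\partial_{\xi_2}A^2\neq0$ on the subsonic branch.
\item Once $\nabla\phi^{(\zeta_n)}$ is uniformly piecewise $C^\beta$, the coefficients $A^i_{\xi_j}$ are piecewise $C^\beta$. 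Lemma~2.2 of \cite{Chen2024Elliptic}, combined with boundary Schauder estimates and the covering argument of Proposition~\ref{Pr:app-estimate}, then bootstraps to uniform piecewise $C^{1,\alpha}$ bounds on $\nabla\phi$. Repeating Steps 2--3 of that proof yields $\|\phi^{(\zeta_n)}\|_{2,\alpha;\N^\pm}\leqslant C$.
\end{itemize}
This is the step I expect to be hardest. The delicate point is H\"older continuity of $\partial_{y_2}\phi$ across the interface, where it genuinely jumps; it must be obtained from continuity of the conormal quantity $p$ rather than from $\partial_{y_2}\phi$ itself.

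\textbf{Step 3: passage to the limit.} With the uniform bounds of Step 2, Arzel\`a--Ascoli gives a subsequence with $\phi^{(\zeta_n)}\to\phi^*$ in $C^2_{loc}(\overline{\N^\pm})$ and locally uniformly on $\overline{\N}$. The limit $\phi^*$ is piecewise $C^{2,\alpha}$ and solves \eqref{Equ:elliptic} with $\zeta=\zeta^+$ in the weak sense, and its $\mathcal K$ lies in $\mathcal K'\Subset\mathcal D$.

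\textbf{Step 4: upstream condition and conclusion.} For the upstream condition, I would repeat the cutoff energy argument of Proposition~\ref{Pr:asymptotic} for $\psi_n=\phi^{(\zeta_n)}-\phi_\ell$. The linearized coefficients $\int_0^1A^i_{\xi_j}\,\dif s$ between $\phi_\ell$ and $\phi^{(\zeta_n)}$ are uniformly elliptic because the segment stays in a convex neighbourhood, or alternatively in the fixed compact set after enlarging it. The boundary data $\zeta_nW^\pm$ on $\{y_1<-L/2\}$ are controlled uniformly since $|\zeta_n|\leqslant|\zeta^+|+1$. This gives $\int_{[-L-1,-L]\times[-m^-,m^+]}|\nabla\psi_n|^2\leqslant C\sigma_L^{-1}$ uniformly in $n$. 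That bound passes to $\phi^*$, and together with its $C^{2,\alpha}$ bound it yields condition \ref{Cond:convergence}.

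Hence $\phi^*$ solves Problem~\ref{Problem:elliptic} at $\zeta^+$. The local existence result (Proposition~\ref{Pr:existence-varphi} with $\zeta_0=\zeta^+$) then extends the solution to $(\zeta^+-\delta_0,\zeta^++\delta_0)$, contradicting the definition of $\zeta^+$.
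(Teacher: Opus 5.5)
Your argument follows the paper's route. You argue by contradiction with the maximality of $I$ and obtain a uniform piecewise $C^{2,\alpha}$ bound from De Giorgi--Nash applied to $w=\partial_{y_1}\phi$. You recover $\partial_{y_2}\phi$ from the pressure $p=A^2$, which is continuous across $\Gamma^*$; your implicit-function inversion is equivalent to the paper's explicit formula via $\rho=(\gamma p/((\gamma-1)S))^{1/\gamma}$ and \eqref{Equ:X}. You then bootstrap with Lemma~2.2 of \cite{Chen2024Elliptic}. The phrase ``$p$ inherits H\"older continuity'' should be made precise. From $\nabla^\perp p=\mathrm D_{\bxi}\mathbf A\,\nabla w$ one has $|\nabla p|\leqslant C|\nabla w|$, and the Caccioppoli bound $\int_{B_r}|\nabla w|^2\leqslant Cr^{2\beta}$ together with Morrey's lemma gives $p\in C^{0,\beta}$. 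This also works up to $\Gamma^\pm$; near the walls the paper runs the same Morrey argument directly on $\partial_{y_2}\phi$. Your Step~4 is a genuine addition, since the paper never checks condition~\ref{Cond:convergence} for the limit. The uniform ellipticity you need there holds because $\xi_2-\sqrt{(1+\xi_1^2)/(2Q_{\max}(S,B))}$ is concave in $\bxi$. Hence the segment from $\nabla\phi_\ell$ to $\nabla\phi^{(\zeta_n)}$ stays a uniform distance inside $\mathcal D$; you should state this rather than just assert it.

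The one genuine gap is in your opening reduction, and the paper's proof contains the same one. Negating ``\eqref{Est:del-2-phi} or \eqref{Est:K}'' gives two subsequences, not necessarily the same one. Along the first, $\|\partial_{y_2}\phi^{(\zeta_n)}\|_{0,0;\N^\pm}$ is bounded; along the second, $\dist(\mathcal K^{(\zeta_n)},\partial\mathcal D)$ is bounded below. The sequence could alternate between approaching stagnation and approaching the sonic state. Then neither alternative holds for the full sequence, yet no subsequence satisfies both of your bounds, so your contradiction never starts. What your argument (and the paper's) actually proves is
\[
\|\partial_{y_2}\phi^{(\zeta_n)}\|_{0,0;\N^\pm}+\dist\big(\mathcal K^{(\zeta_n)},\partial\mathcal D\big)^{-1}\to\infty .
\]
Equivalently, every subsequence has a further subsequence satisfying \eqref{Est:del-2-phi} or \eqref{Est:K}. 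Nothing in the argument rules out the alternation above, so the literal dichotomy for the whole sequence is not obtained. The conclusion should be stated in this combined form.
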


To prove Proposition~\ref{Pr:asymptotic-zeta}, it suffices to show that for any fixed $\eps > 0$, there exists a constant $\overline C = \overline C(\eps, \zeta^+)$, independent of $\zeta$, such that whenever
\begin{equation}\label{Equ:cond}
	\|\partial_{y_2}\phi^{(\zeta)}\|_{0,0;\N^\pm} < \frac{1}{\eps}
	\quad\text{and}\quad
	\dist(\mathcal K^{(\zeta)}, \partial \mathcal D) > \eps,
\end{equation}
one has
	\begin{equation}\label{Est:U-alpha}
	\|\phi^{(\zeta)}\|_{2,\alpha;\N^\pm} \leqslant \overline C.
\end{equation}
Indeed, suppose that both alternatives \eqref{Est:del-2-phi} and \eqref{Est:K} fail along some sequence $\zeta_n \to \zeta^+$. Then there exists $\eps_0 > 0$ such that \eqref{Equ:cond} holds for all $n$ with $\eps = \eps_0$. The uniform bound \eqref{Est:U-alpha} then implies, upon extracting a subsequence, a solution of Problem~\ref{Problem:elliptic} at $\zeta = \zeta^+$, which contradicts the maximality of $I$.

We now establish the desired estimate for a given $\eps > 0$, namely, we seek a constant $\overline C = \overline C(\eps, \zeta^+)$ such that \eqref{Est:U-alpha} holds. To this end, we assume throughout that $\phi = \phi^{(\zeta)}$ satisfies \eqref{Equ:cond}. Then, Bernoulli's law \eqref{Equ:X} yields that $\|\partial_{y_1}\phi^{(\zeta)}\|_{0,0;\N}$ is bounded. Consequently, there exists a compact subset $\mathcal K(\eps) \Subset \mathcal D$, independent of $\zeta$, such that
\begin{equation}
	\mathcal K^{(\zeta)} \subset \mathcal K(\eps).
\end{equation}
This compactness yields constants $0 < \lambda \leqslant \Lambda < \infty$, independent of $\zeta$, for which the following uniform ellipticity hold:
\begin{equation}\label{Est:uniform}
	\boldsymbol{\eta}^\top \mathrm D_{\boldsymbol{\xi}} \mathbf A(S, B, \nabla \phi^{(\zeta)}) \, \boldsymbol{\eta}
	\geqslant \lambda |\boldsymbol{\eta}|^2
	\quad\text{and}\quad
	\| \mathrm D_{\boldsymbol{\xi}} \mathbf A(S, B, \nabla \phi^{(\zeta)}) \|_{0,0;\N^\pm}
	\leqslant \Lambda.
\end{equation}

With the above uniform ellipticity at hand, the general strategy is to apply the De Giorgi--Nash theory to establish $C^{0,\beta}$ bounds for $\nabla \phi$ for some $\beta > 0$, and then bootstrap the regularity via an iteration argument to obtain \eqref{Est:U-alpha}. The main difficulty, however, is that the coefficients in \eqref{Equ:elliptic} are discontinuous across $\Gamma^*$. Consequently, differentiation with respect to $y_2$ is not admissible, and a direct estimate for $\partial_{y_2}\phi$ is unavailable.

To address this issue, we employ the approach in \cite{Chen2006} to derive an elliptic system for $w := \partial_{y_1}\phi$ and the pressure $p$. To this end, let $\mathbf U = (\mathbf u, p, \rho)$ denote the associated flow recovered from \eqref{Def:flow-recovery}, it follows that $\mathbf U$ satisfies the transformed Euler system \eqref{Equ:lang}, which can be rewritten in the compact form
\begin{equation}\label{Equ:matrix}
	\mathbf H(\mathbf U) \mathbf U_{y_1} + \mathbf G(\mathbf U) \mathbf U_{y_2} = 0,
\end{equation}
where
\begin{align}
	\mathbf H(\mathbf U) &=
	\begin{pmatrix}
		-\dfrac{1}{\rho u_1^2} & 0  & 0  & -\dfrac{1}{\rho^2 u_1}\\[1.8ex]
		1-\dfrac{p}{\rho u_1^2} & 0   & \dfrac{1}{\rho u_1} & -\dfrac{p}{\rho^2 u_1} \\[1.8ex]
		0 & 1 & 0 & 0 \\[1.8ex]
		u_1 & u_2   & \dfrac{\gamma}{(\gamma-1)\rho} & -\dfrac{\gamma p}{(\gamma-1)\rho^2}
	\end{pmatrix},\\[2ex]
	\mathbf G(\mathbf U) &=
	\begin{pmatrix}
		\dfrac{u_2}{u_1^2} & -\dfrac{1}{u_1} & 0 & 0 \\[1.8ex]
		 \dfrac{p u_2}{u_1^2}  & -\dfrac{p}{u_1}  &  -\dfrac{u_2}{u_1}& 0\\[1.8ex]
		0 & 0 & 1 & 0 \\[1.8ex]
		0 & 0 & 0 & 0
	\end{pmatrix}.
\end{align}

For subsonic flows, the characteristic equation
\begin{equation}
	\det(\lambda \mathbf H - \mathbf G) = 0
\end{equation}
has four eigenvalues
\begin{equation}
	\lambda_1 = \lambda_2 = 0,\qquad
	\lambda_{3,4} = \lambda_\pm = \lambda_R \pm i \lambda_I,
\end{equation}
where
\begin{equation}
	\lambda_R = -\frac{c^2 \rho u_2}{c^2 - u_1^2},\qquad
	\lambda_I = \frac{c \rho u_1 \sqrt{c^2 - |\mathbf u|^2}}{c^2 - u_1^2}.
\end{equation}
The corresponding left eigenvectors $\boldsymbol{\ell}_i$ ($i = 1,\dots,4$) are given by
\begin{equation}
	\begin{split}
		\boldsymbol{\ell}_1 &= (0,\;0,\;0,\;1),\\
		\boldsymbol{\ell}_2 &= (-p u_1,\; u_1,\; u_2,\; -1),\\
		\boldsymbol{\ell}_{3,4} &= \Bigg(
		\left( \frac{\gamma p^2}{(\gamma-1)\rho u_1} - \frac{p u_1}{\gamma-1} \right) \lambda_{3,4}
		+ \frac{\gamma p^2 u_2}{(\gamma-1)u_1},\;\\
		&\quad-\left( u_1 + \frac{\gamma p}{(\gamma-1)\rho u_1} \right)\lambda_{3,4}
		- \frac{\gamma p u_2}{(\gamma-1)u_1},\; 
		\frac{\gamma p}{\gamma-1} - u_2 \lambda_{3,4},\;
		\lambda_{3,4}
		\Bigg).
	\end{split}
\end{equation}
These eigenvectors can be used to project the system onto the characteristic directions. Indeed, multiplying \eqref{Equ:matrix} on the left by $\boldsymbol{\ell}_1$ and $\boldsymbol{\ell}_2$ yields the entropy relation and the Bernoulli law, respectively (cf. \eqref{Equ:SB}). Left-multiplication by $\boldsymbol{\ell}_3$ yields a complex equation; separating its real and imaginary parts, we obtain the following elliptic system for $(w,p)$:
\begin{equation}
	\begin{aligned}
		\mathrm D_R w + e \, \mathrm D_I p &= 0,\\
		\mathrm D_I w - e \, \mathrm D_R p &= 0,
	\end{aligned}
	\label{eq:wp_system}
\end{equation}
where the differential operators are defined by
\begin{equation}
	\mathrm D_R = \partial_{y_1} + \lambda_R \partial_{y_2},\qquad
	\mathrm D_I = \lambda_I \partial_{y_2},
\end{equation}
and
\begin{equation}
	e = \frac{\sqrt{c^2 - |\mathbf u|^2}}{c \rho u_1^2}.
\end{equation}
In view of \eqref{Def:velocity-Lag} and \eqref{Def:A}, equation \eqref{eq:wp_system} can be reformulated as
\begin{equation}\label{Equ:wp}
\nabla^\perp_{\y} p = \mathrm D_{\bxi} \mathbf A \cdot \nabla w,
\end{equation}
where $\nabla^\perp_{\y} p$ denotes $(-\partial_{y_2}p, \partial_{y_1} p)$. Consequently, upon taking the divergence of \eqref{Equ:wp}, we arrive at
\begin{equation}	\label{Equ:w_second}
	\div_{\y} \big(\mathrm D_{\bxi} \mathbf A \cdot \nabla w\big) = 0.
\end{equation}
Analogously, the pressure $p$ satisfies the following elliptic equation:
\begin{equation}\label{eq:p_second}
	\div_{\y}\big((\det \mathrm D_{\bxi} \mathbf A)^{-2} \mathrm D_{\bxi} \mathbf A \cdot \nabla p\big) = 0.
\end{equation}

By the classical De Giorgi--Nash theory, the system $(w,p)$ admits interior $C^{0,\beta}$ estimates for some $\beta > 0$. This, in turn, yields a $C^{1,\beta}$ bound for $\phi$ near the contact discontinuity. More precisely, we have the following result.

\begin{lemma}\label{Lm:inner}
	Suppose that $\phi$ satisfies \eqref{Equ:cond}. Then for every $\mathbf y \in \Gamma^*$,
	\begin{equation}\label{Est:mid-beta}
		\|\phi\|_{1,\beta; B^\pm_{\underline{m}/2}(\mathbf y)} \leqslant \overline C.
	\end{equation}
\end{lemma}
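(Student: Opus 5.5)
The plan is to derive \eqref{Est:mid-beta} from interior De Giorgi--Nash estimates for the two divergence-form equations \eqref{Equ:w_second} and \eqref{eq:p_second}. Their coefficients are bounded and measurable but may jump across $\Gamma^*$, and De Giorgi--Nash theory does not require continuity of the coefficients. After that, the remaining components of $\nabla\phi$ will be recovered algebraically.

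\textbf{Step 1: equations and uniform ellipticity.} First I check that $w=\partial_{y_1}\phi$ and $p$ are weak solutions of \eqref{Equ:w_second} and \eqref{eq:p_second} in all of $\N$, including across $\Gamma^*$. Away from $\Gamma^*$ this is the classical computation. Across $\Gamma^*$, the contact conditions \eqref{Cond:RH-y} give $[w]=[p]=0$, so $w,p\in H^1_{\rm loc}(\N)\cap C(\N)$.

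The conormal fluxes $\mathrm D_{\bxi}\mathbf A\cdot\nabla w$ and $(\det \mathrm D_{\bxi}\mathbf A)^{-2}\mathrm D_{\bxi}\mathbf A\cdot\nabla p$ must also have continuous normal components. By \eqref{Equ:wp}, the normal component of the first flux is the second component of $\nabla^\perp p$, namely $\partial_{y_1}p$, whose jump vanishes because $[p]=0$ along $\Gamma^*$. The second flux can be written, by inverting \eqref{Equ:wp}, as a rotated gradient of $w$. Its normal component is then $\pm\partial_{y_1}w$, which is continuous since $[w]=0$. I would verify this inversion carefully; it is the same structure used in \cite{Chen2006}.

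Under \eqref{Equ:cond}, \eqref{Est:uniform} gives bounds $\lambda,\Lambda$ independent of $\zeta$. I also need to check that $\mathrm D_{\bxi}\mathbf A$ is uniformly elliptic as a (nonsymmetric) matrix, and that $\det \mathrm D_{\bxi}\mathbf A$ is bounded above and below by \eqref{Equ:det-A} on $\mathcal K(\eps)$.

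\textbf{Step 2: Hölder bounds.} Next I obtain $L^\infty$ bounds. We have $|w|\le C$ from the Bernoulli relation \eqref{Equ:X} together with $\phi_{y_2}<1/\eps$. We have $|p|\le C$ because $p=\frac{\gamma-1}{\gamma}S\rho^\gamma$ and $\rho<\rho^+(S,B)$.

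The De Giorgi--Nash interior estimate (e.g. Theorem 8.24 in \cite{Gilbarg1997Elliptic}) on $B_{\underline m}(\y)\subset\N$ for $\y\in\Gamma^*$ then gives
\[
\|w\|_{0,\beta;B_{\underline m/2}(\y)}+\|p\|_{0,\beta;B_{\underline m/2}(\y)}\le \overline C,
\]
with $\beta$ and $\overline C$ depending only on $\eps$ and the data.

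\textbf{Step 3: recovering $\partial_{y_2}\phi$.} Finally I recover $\partial_{y_2}\phi$ on each side of $\Gamma^*$. Since $p=\frac{\gamma-1}{\gamma}S\rho^\gamma$ and $S\in C^{1,\alpha}$ on each closed side, $\rho$ is $C^{0,\beta}$ on $B^\pm_{\underline m/2}(\y)$, with $\rho$ bounded away from $0$.

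Then $\phi_{y_2}=(2Q(S,B,\rho))^{-1/2}(1+w^2)^{1/2}$ by \eqref{Equ:X} and the definition of $X$. The quantity $Q(S,B,\rho)$ is bounded below, being at least $X\ge \tfrac12\phi_{y_2}^{-2}>\eps^2/2$. Hence $\phi_{y_2}$ is a smooth function of $C^{0,\beta}$ quantities on each side and is itself $C^{0,\beta}$ there. Together with $|\phi|\le C$ (the maximum principle gives $\phi$ between $h^\pm_\zeta$), this yields \eqref{Est:mid-beta}.

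\textbf{Main obstacle.} The hard part is Step 1: justifying rigorously that $w$ and $p$ are global weak solutions across the interface, i.e. that the transmission conditions for both conormal fluxes follow from $[u_2/u_1]=[p]=0$. Everything else is standard.
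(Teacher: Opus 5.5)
Your proposal is correct and follows essentially the paper's proof. The paper also applies interior De Giorgi--Nash estimates (Gilbarg--Trudinger, Theorem 8.24) to $w=\partial_{y_1}\phi$ and $p$ on balls centered on $\Gamma^*$, then recovers $\partial_{y_2}\phi$ algebraically from $\rho=\big(\gamma p/((\gamma-1)S)\big)^{1/\gamma}$ and the lower bound on $Q(S,B,\rho)$; your Step~1 additionally justifies the interface (via $w,p\in H^1_{\mathrm{loc}}(\N)$), which the paper leaves implicit. When you invert \eqref{Equ:wp}, you will find that $\mathrm D_{\bxi}\mathbf A$ is symmetric and the divergence-free flux is $(\det \mathrm D_{\bxi}\mathbf A)^{-1}\mathrm D_{\bxi}\mathbf A\,\nabla p=(\partial_{y_2}w,-\partial_{y_1}w)$, so the exponent in \eqref{eq:p_second} should be $-1$ rather than $-2$, which is exactly the form your rotated-gradient argument needs.
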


\begin{proof}
	We deduce from Theorem 8.24 in \cite{Gilbarg1997Elliptic} that there exists some $\beta >0$ such that
	\begin{equation}
		\|\partial_{y_1}\phi\|_{0,\beta; B_{\underline{m}/2}(\mathbf y)}
		= \|w\|_{0,\beta; B_{\underline{m}/2}(\mathbf y)}
		\leqslant\overline C \|w\|_{0,0; B_{3\underline m/4}(\mathbf y)},
	\end{equation}
	and
	\begin{equation}
		\|p\|_{0,\beta; B_{\underline{m}/2}(\mathbf y)}
		\leqslant\overline C \|p\|_{0,0; B_{3\underline m/4}(\mathbf y)}.
	\end{equation}
	Furthermore, from \eqref{Equ:X} and \eqref{Def:flow-recovery}, we deduce that
	\begin{equation}
		\phi_{y_2} = \sqrt{\frac{1+\phi_{y_1}^2}{Q(S, B, \rho)}},\qquad
		\rho = \left(\frac{\gamma p}{(\gamma-1)S}\right)^{1/\gamma}.
	\end{equation}
	Since $Q$ is smooth and
	\begin{equation}
		Q(S, B, \rho) \geqslant \inf_{\N^\pm} \frac{1+\phi_{y_1}^2}{\phi_{y_2}^2} \geqslant \eps^2,
	\end{equation}
	the desired estimate \eqref{Est:mid-beta} follows immediately.
\end{proof}

Near the boundary $\Gamma^\pm$, we refrain from using the elliptic equation for $p$, because $p$ is not prescribed on these boundaries. Rather, we work directly with the original equation \eqref{Equ:elliptic} and apply the De Giorgi--Nash theory to obtain the desired $C^{1,\beta}$ estimate.

\begin{lemma}\label{Lm:boundary}
	Suppose that $\phi$ satisfies \eqref{Equ:cond}. Then for every $\mathbf y \in \Gamma^\pm$, the following estimate holds:
	\begin{equation}\label{Est:boundary-holder}
		\|\phi\|_{1,\beta; B^\pm_{m^\pm-\underline{m}/4}(\mathbf y)} \leqslant\overline C.
	\end{equation}
\end{lemma}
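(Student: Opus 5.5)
Near $\Gamma^\pm$ the coefficients are smooth, since $S$ and $B$ are $C^{1,\alpha}$ on each closed side. We may therefore differentiate \eqref{Equ:elliptic} in the tangential direction $y_1$ and use De Giorgi--Nash--Moser up to the flat boundary. Fix $\y\in\Gamma^+$ (the case $\Gamma^-$ is symmetric) and work in the half ball $B^+_{m^+}(\y)$. This half ball lies in $\overline{\N^+}$ and only touches $\Gamma^*$ at its edge, so on it $\phi$ solves a uniformly elliptic quasilinear equation with no interface inside.

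\textbf{Step 1: the equation for $w=\partial_{y_1}\phi$.} Since $S,B$ depend only on $y_2$, differentiating $\div\mathbf A(S,B,\nabla\phi)=0$ in $y_1$ gives
\[
\div_{\y}\big(\mathrm D_{\bxi}\mathbf A(S,B,\nabla\phi)\nabla w\big)=0 \quad\text{in } B^+_{m^+}(\y).
\]
This is first justified via difference quotients in $y_1$ and is legitimate because $\phi\in C^{2,\alpha}(\overline{\N^+})$. By \eqref{Est:uniform} the coefficient matrix has ellipticity and bound constants $\lambda,\Lambda$ independent of $\zeta$; only measurability is needed. On the flat boundary $\Gamma^+\cap B_{m^+}(\y)$ we have $w=\zeta W^{+\prime}(y_1)$. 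This is bounded in $C^{0,1}$ by $|\zeta^+|\,\|W^+\|_{2,\alpha}$ for $\zeta$ in a bounded range, which holds since we consider $\zeta\to\zeta^+<\infty$.

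\textbf{Step 2: Hölder bound for $w$.} Moreover $|w|$ is bounded, since by \eqref{Equ:cond} and Bernoulli $\|\partial_{y_1}\phi\|_{0,0}$ is bounded, as already noted. The boundary De Giorgi--Nash estimate (Theorem 8.29 in \cite{Gilbarg1997Elliptic}) then gives
\[
\|w\|_{0,\beta;B^+_{m^+-\underline m/4}(\y)}\leqslant \overline C\big(\|w\|_{0,0;B^+_{m^+}(\y)}+|\zeta|\,\|W^+\|_{1,0}\big)\leqslant\overline C,
\]
for some $\beta>0$ depending only on $\lambda,\Lambda$.

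\textbf{Step 3: recovering $\partial_{y_2}\phi$.} This is the main obstacle, because we cannot differentiate in $y_2$ cheaply. We use the first component of the equation, which in divergence form reads $\partial_{y_2}A^2=-\partial_{y_1}A^1$, i.e.\ $\partial_{y_2}p=-\partial_{y_1}(w/(\rho\phi_{y_2}))$. Here $w/(\rho\phi_{y_2})=u_2$, so this says $\nabla^\perp p=\mathrm D_{\bxi}\mathbf A\nabla w$, the relation \eqref{Equ:wp}, which holds pointwise in $\N^+$. Rather than estimating $p$ by an equation needing boundary data, we use this relation to bound $[p]_{\beta}$ through an integral representation.

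Concretely, \eqref{Equ:wp} gives $\nabla p$ in terms of $\nabla w$. Campanato/Caccioppoli estimates for the $w$-equation give $\int_{B_r}|\nabla w|^2\leqslant C r^{2\beta}$ on half balls centered on or near $\Gamma^+$, and the boundary Caccioppoli inequality applies since $w$'s boundary data is Lipschitz. Hence $\int_{B_r}|\nabla p|^2\leqslant Cr^{2\beta}$, and Morrey's lemma yields $\|p\|_{0,\beta}\leqslant\overline C$ on $B^+_{m^+-\underline m/4}(\y)$. Then, exactly as in the proof of Lemma \ref{Lm:inner}, $\rho=(\gamma p/((\gamma-1)S))^{1/\gamma}$ is $C^{0,\beta}$. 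Also $Q(S,B,\rho)\geqslant\eps^2$, and $\phi_{y_2}=\sqrt{(1+w^2)/Q(S,B,\rho)}$ with $Q$ smooth, which gives $\|\partial_{y_2}\phi\|_{0,\beta}\leqslant\overline C$. Together with Step 2 and $|\phi|\leqslant\max|h^\pm_\zeta|$, this proves \eqref{Est:boundary-holder}.
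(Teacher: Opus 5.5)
Your proposal is correct. Steps 1--2 coincide with the paper's proof: the differentiated equation \eqref{Equ:w_second} for $w=\partial_{y_1}\phi$ with data $\zeta(W^+)'$ on $\Gamma^+$, followed by the boundary De Giorgi--Nash estimate (Theorem 8.29 in \cite{Gilbarg1997Elliptic}). Step 3 uses the paper's key mechanism, the Caccioppoli inequality with $\overline w=\zeta(W^+)'$ or $\overline w=w(\mathbf z)$. It turns the H\"older bound on $w$ into $\int_{B_r\cap\N^+}|\nabla w|^2\leqslant\overline C r^{2\beta}$, and Morrey's lemma then finishes. The difference is where you send this decay:
\begin{itemize}
\item The paper solves the non-divergence form \eqref{Equ:y22-phi} for $\partial_{y_2}\partial_{y_2}\phi$. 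This uses $A^2_{\xi_2}\geqslant\lambda$ and the boundedness of $A^2_SS'+A^2_BB'$. It then applies Morrey directly to $\partial_{y_2}\phi$.
\item You use the first-order identity \eqref{Equ:wp}, which gives $|\nabla p|\leqslant\Lambda|\nabla w|$ pointwise in $\N^+$. You apply Morrey to $p$ and then recover $\rho$ and $\phi_{y_2}$ algebraically, exactly as in Lemma \ref{Lm:inner}.
\end{itemize}
The two routes rest on the same identity: the $y_2$-component of \eqref{Equ:wp}, expanded by the chain rule, is \eqref{Equ:y22-phi}. Your version makes the boundary lemma structurally identical to the interface lemma (H\"older bounds for $(w,p)$, then algebraic recovery), and this step needs no bounds on $S'$, $B'$. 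The paper's version avoids the inversion through $Q$ and works with $\phi$ alone.

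One bookkeeping point. You must run Caccioppoli on $B_{2r}(\mathbf z)$ for every $\mathbf z\in B^+_{m^+-\underline m/4}(\mathbf y)$, including interior balls and not just those near $\Gamma^+$. For that you need the H\"older bound for $w$ on a slightly larger half ball than the one Step 2 produces. The paper proves it on $B^+_{m^+-\underline m/8}(\mathbf y)$ and takes $r\leqslant\underline m/24$. Applying Theorem 8.29 with that margin fixes this at no cost.
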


\begin{proof}
	Because  $w = \partial_{y_1}\phi$ satisfies Eq.~\eqref{Equ:w_second} with boundary $w = \zeta (W^\pm)'$ on $\Gamma^\pm$, Theorem 8.29 in \cite{Gilbarg1997Elliptic} guarantees 
	\begin{equation}
		\|\partial_{y_1}\phi\|_{0,\beta; B^\pm_{m^\pm-\underline{m}/8}(\mathbf y)}
		\leq
		C\left(
		\|\partial_{y_1}\phi\|_{0,0; B^\pm_{m^\pm-\underline{m}/16}(\mathbf y)}
		+ |\zeta| \, \|W^\pm\|_{1,\alpha;\mathbb R}
		\right).
	\end{equation}
	This $\beta$ maybe different from that in Lemma \ref{Lm:inner}, we take the smaller one as our $\beta$.	Notice that $0<\zeta < \zeta^+$, the above inequality implies
	\begin{equation}
		\|\partial_{y_1}\phi\|_{0,\beta; B^\pm_{m^\pm-\underline{m}/8}(\mathbf y)}  \leqslant\overline C.
	\end{equation}
	
We next estimate $\partial_{y_2}\phi$. Fix $\mathbf z \in B^+_{m^+-\underline{m}/4}(\mathbf y)$ and choose $R \leqslant\underline{m}/24$. Let $\eta \in C_0^\infty(B_{2R}(\mathbf z))$ be a cut-off function satisfying $\eta = 1$ on $B_R$ and $|\nabla \eta| < 2/R$. Define
	\begin{equation}
		\overline w =
		\begin{cases}
			\zeta (W^+)', & \text{if } B_{2R} \cap \Gamma^+ \neq \varnothing, \\[4pt]
			w(\mathbf z), & \text{otherwise},
		\end{cases}
	\end{equation}
	and set $v = \eta^2 (w - \overline w)$. Then $v \in W_0^{1,2}\big(B^+_{m^+-\underline{m}/8}(\mathbf y)\big)$, and from \eqref{Equ:w_second} we obtain
	\begin{equation}
		\begin{aligned}
			&\int_{B^+_{m^+-\underline{m}/8}(\mathbf y)}
			\eta^2 A^i_{\xi_j} w_{y_j} w_{y_i} \\
			&= 
			\int_{B^+_{m^+-\underline{m}/8}(\mathbf y)}
			\eta^2 A^i_{\xi_j} w_{y_j} \partial_{y_i} \overline w
			-
			2 \int_{B^+_{m^+-\underline{m}/8}(\mathbf y)}
			\eta \, (w - \overline w) \, A^i_{\xi_j} w_{y_j} \partial_{y_i}\eta.
		\end{aligned}
	\end{equation}
	By the Cauchy--Schwarz inequality together with the uniform ellipticity estimate \eqref{Est:uniform}, we obtain
	\begin{equation}
		\int_{B_R \cap \N^+} |\nabla w|^2
		\leqslant
		\overline C \left(
		R^2 + \sup_{B_{2R}\cap \N^+} (w - \overline w)^2
		\right)
		\leqslant\overline C R^{2\beta},
	\end{equation}
	where the last inequality follows from the previously established H\"older bound for $w$. Consequently,
	\begin{equation}\label{Est:int}
		\int_{B_R \cap \N^+} |\partial_{y_i}\partial_{y_j}\phi|^2
		\leqslant\overline C R^{2\beta}
	\end{equation}
	for $j \neq 2$.
	
	To extend the above estimate to $j = 2$, we rewrite the elliptic equation \eqref{Equ:elliptic} as
	\begin{equation}\label{Equ:y22-phi}
		A^2_{\xi_2} \partial_{y_2}\partial_{y_2}\phi
		=
		-\Big(
		A^1_{\xi_1}\partial_{y_1}\partial_{y_1}\phi
		+ 2A^2_{\xi_1}\partial_{y_1}\partial_{y_2}\phi
		+ A^2_S S'
		+ A^2_B B'
		\Big).
	\end{equation}
	Since $A^2_{\xi_2} \geqslant \lambda > 0$ uniformly and all terms on the right-hand side are already controlled by the preceding estimates, \eqref{Est:int} also holds for $j = 2$. 
	
	Then, an application of Morrey's estimate (see, e.g., Theorem 7.19 in \cite{Gilbarg1997Elliptic}) yields the desired H\"older estimate:
	\begin{equation}
		\|\partial_{y_2}\phi\|_{0,\beta; B^+_{m^+-\underline{m}/4}(\mathbf y)} \leqslant\overline C.
	\end{equation}
	The proof for the lower boundary $\Gamma^-$ is completely analogous. Combining the above estimates yields the claimed bound for $\|\phi\|_{1,\beta}$ near $\Gamma^\pm$, and thus completes the proof of the lemma.
\end{proof}

We are now ready to proceed with the proof of Proposition~\ref{Pr:asymptotic-zeta}.

\begin{proof}[Proof of Proposition \ref{Pr:asymptotic-zeta}]
	For any $\mathbf y = (y_1, y_2) \in \N$, we have
	\begin{equation}\label{Est:contain}
		B_{\underline m/8}^\pm(\mathbf y) \subset B^\pm_{m^\pm-\underline{m}/4}(y_1, \pm m^\pm)
		\quad\text{or}\quad
		B_{\underline m/8}^\pm(\mathbf y) \subset B^\pm_{\underline{m}/2}(y_1, 0).
	\end{equation}
	Combining this with Lemmas~\ref{Lm:inner} and~\ref{Lm:boundary} yields
	\begin{equation}\label{Est:varphi-beta}
		\|\phi\|_{1,\beta;\N^\pm} \leqslant\overline C.
	\end{equation}
	
We next claim that the Schauder theory yields
	\begin{equation}\label{Est:beta-2}
		\|\phi\|_{2,\beta;\N^\pm} \leqslant\overline C.
	\end{equation}
To see this, consider again equation \eqref{Equ:w_second} with boundary $w = \zeta (W^\pm)'$ on $\Gamma^\pm$. For $\mathbf y \in \Gamma^*$, Lemma 2.2 in \cite{Chen2024Elliptic} gives
\begin{equation}
	\|w\|_{1,\beta; B_{\underline m/2}^{\pm}(\mathbf y)}
	\leqslant C \|w\|_{0,0; B_{\underline m}(\mathbf y)},
\end{equation}
whereas for $\mathbf y^\pm \in \Gamma^\pm$, Corollary 8.36 in \cite{Gilbarg1997Elliptic} yields
\begin{equation}
	\|w\|_{1,\beta; B_{m^\pm-\underline m/4}^{\pm}(\mathbf y^\pm)}
	\leq
	\overline C\left(
	\|w\|_{0,0; B_{m^\pm}^{\pm}(\mathbf y^\pm)}
	+ \zeta^+ \|(W^\pm)'\|_{1,\beta;\mathbb R}
	\right).
\end{equation}
Hence, in view of \eqref{Est:contain}, we have
\begin{equation}\label{Est:psi-regularity}
	\|\partial_{y_1}\phi\|_{1,\beta;\N^\pm}
	= \|w\|_{1,\beta;\N^\pm}
	\leqslant\overline C.
\end{equation}
	
	It remains to estimate $\partial_{y_2}\partial_{y_2}\phi$. To this end, recall that the right-hand side of \eqref{Equ:y22-phi} involves only derivatives of $\phi$ that are already controlled by \eqref{Est:psi-regularity}. Since $A^2_{\xi_2} \geqslant \lambda > 0$, we obtain
	\begin{equation}\label{Est:y2-regularity}
		\|\partial_{y_2}\partial_{y_2}\phi\|_{0,\beta;\N^\pm}
		\leqslant\overline C.
	\end{equation}
	Combining \eqref{Est:psi-regularity} and \eqref{Est:y2-regularity} yields \eqref{Est:beta-2}. It follows that
	\begin{equation}
		\|\phi\|_{1,\alpha;\N^\pm} \leqslant C\|\phi\|_{2,\beta;\N^\pm} \leqslant \overline C.
	\end{equation} 
	By repeating the same argument above yields \eqref{Est:U-alpha}, which completes the proof of Proposition \ref{Pr:asymptotic-zeta}.
\end{proof}

\section{ Return to the physical coordinates}\label{Sec:main}

In this section, let $\phi$ be a solution of Problem~\ref{Problem:elliptic} obtained in Proposition~\ref{Pr:existence-varphi}. Then, the associated flow $\U$ recovered from \eqref{Def:flow-recovery} satisfies the transformed Euler system \eqref{Equ:lang}. Moreover, on the contact discontinuity $y_2=0$, both $\phi$ and $A^2(S,B,\nabla\phi)$ are continuous. Consequently, $\mathbf U$ satisfies the Rankine–Hugoniot conditions \eqref{Cond:RH-y}.

We next return to the physical variables by defining the inverse Lagrangian transformation
\[
\mathcal S:\N^\pm \to \N_\zeta^\pm, \qquad
\mathcal S(y_1, y_2) = \big(y_1, \phi(y_1, y_2)\big).
\]
Geometrically, $\mathcal S$ maps the flat strip $\N^\pm$ back to the physical nozzle $\N_\zeta^\pm$ by preserving the horizontal coordinate ($x_1 = y_1$) and setting the vertical coordinate to $\phi$. Since $\phi_{y_2} > 0$ uniformly in $\N^\pm$, hence $\mathcal S$ is a $C^{2,\alpha}$ diffeomorphism from $\N^\pm$ onto $\N_\zeta^\pm$. We now define the physical solution by
\[
\mathbf U(\mathbf x) := \mathbf U(\mathcal S^{-1}(\mathbf x)), \qquad \mathbf x \in \N_\zeta^\pm.
\]
A direct verification shows that $\mathbf U(\mathbf x)$ is a weak solution of the Euler system \eqref{Equ:euler} in $\N_\zeta$, satisfies the slip boundary conditions \eqref{Cond:slip} on $\Gamma_\zeta^\pm$, and fulfills the Rankine--Hugoniot conditions \eqref{Cond:RH-intro} across the contact discontinuity $\Gamma_\zeta^*$.

The remaining assertions of Theorem~\ref{Thm:main} are now proved.
 First, since $(S,B,\nabla\phi)$ lies in a compact subset of $\mathcal D$, Lemma~\ref{Lm:K-equiv} directly yields condition \ref{Cond:p4}. Second, \ref{Cond:convergence} follows immediately from the convergence of $\nabla \varphi$ to $\nabla \varphi_\ell$. Third, we prove \eqref{Est:blowup} by contradiction. Suppose that $\zeta^+ < \infty$ and there exists a sequence $\zeta_n \to \zeta^+$ for which \eqref{Est:blowup} fails. Then, for some $\varepsilon > 0$, condition \eqref{Equ:cond} is satisfied  for every $\zeta = \zeta_n$. In view of \eqref{Est:U-alpha}, the corresponding solution $\mathbf U^{(\zeta_n)}$ is uniformly bounded in $C^{1,\alpha}(\overline{\N_\zeta^+}) \cap C^{1,\alpha}(\overline{\N_\zeta^-})$. Therefore, up to a subsequence, $\mathbf U^{(\zeta_n)}$ converges to a solution $\mathbf U$ of Problem~\ref{Problem:physical} at $\zeta = \zeta^+$, contradicting the maximality of $I$. The case $\zeta^- > -\infty$ can be verified analogously.

Finally, the contact discontinuity is given by the image of $\Gamma^*$ under the mapping $\mathcal S$, i.e.,
\[
\Gamma^*_\zeta = \left\{(x_1,x_2)\in \mathbb R^2 : x_2 = \phi(x_1,0)\right\},
\]
hence $\omega^*(x_1)=\phi(x_1,0)\in C^{2,\alpha}(\mathbb R)$, which completes the proof of Theorem~\ref{Thm:main}.

\section{Higher regularity of the solutions}\label{Sec:regularity}

In this section we prove Theorem~\ref{Thm:regularity}. The argument relies on differentiating the elliptic equation \eqref{Equ:elliptic} with respect to $y_1$ and then applying the Schauder estimates.

\begin{proposition}\label{Pr:regularity-varphi}
	Let $k \geqslant 1$ be an integer. Suppose additionally that
	\begin{equation}\label{Cond:reg}
		S, B \in C^{k,\alpha}\big([-m^-,0]\big) \cap C^{k,\alpha}\big([0, m^+]\big) \quad \text{ and } \quad W^\pm \in C^{k+1,\alpha}(\overline R).
	\end{equation}
	Then the solution $\phi$ of Problem~\ref{Problem:elliptic} satisfies
	\begin{equation}\label{Est:varphi-regularity}
		\|\phi\|_{k+1,\alpha;\N^\pm} \leqslant C,
	\end{equation}
	where the constant $C$ depends on $\zeta$, $S$, $B$, and $W^\pm$.
\end{proposition}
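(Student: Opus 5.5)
We argue by induction on $k$. The base case $k=1$ is the $C^{2,\alpha}$ bound already available: $\phi\in C^{2,\alpha}(\overline{\N^+})\cap C^{2,\alpha}(\overline{\N^-})$ with norm controlled by Proposition~\ref{Pr:existence-varphi}, or by \eqref{Est:U-alpha}. Assume the bound \eqref{Est:varphi-regularity} holds with $k$ replaced by $k-1\geqslant1$, i.e. $\|\phi\|_{k,\alpha;\N^\pm}\leqslant C$, and assume \eqref{Cond:reg} for $k$. The structure of the step mirrors the proof of Proposition~\ref{Pr:app-estimate}. The $y_1$-derivatives are handled by an elliptic transmission problem across $\Gamma^*$. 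The pure normal derivatives are recovered algebraically from the equation, since $A^2_{\xi_2}\geqslant\lambda>0$.

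\textbf{Step 1 (tangential derivative).} Set $w=\partial_{y_1}\phi$. Since $S,B$ depend only on $y_2$, differentiating $\Lop(\phi)=0$ in $y_1$ (in weak form, using difference quotients in $y_1$, which is legitimate because $\Gamma^*$ and $\Gamma^\pm$ are horizontal) gives \eqref{Equ:w_second}:
\[
\partial_{y_i}\big(A^i_{\xi_j}(S,B,\nabla\phi)\,\partial_{y_j}w\big)=0 \ \text{in } \N, \qquad w=\zeta (W^\pm)' \ \text{on } \Gamma^\pm.
\]
Weak formulation on all of $\N$ encodes the transmission conditions: $w$ is continuous across $\Gamma^*$, and the conormal flux $A^2_{\xi_j}\partial_{y_j}w=\partial_{y_1}\big(A^2(S,B,\nabla\phi)\big)=\partial_{y_1}p$ is continuous across $\Gamma^*$. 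By the induction hypothesis and the $C^{k-1,\alpha}$ regularity of $S,B$ on each side, the coefficients $a_{ij}:=A^i_{\xi_j}(S,B,\nabla\phi)$ lie in $C^{k-1,\alpha}(\overline{\N^\pm})$. This is a smooth function of $(S,B,\nabla\phi)$ on the compact set $\mathcal K\Subset\mathcal D$. The boundary data $\zeta(W^\pm)'$ lie in $C^{k,\alpha}$.

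\textbf{Step 2 (higher transmission Schauder estimate).} We need the $C^{k,\alpha}$ version of the piecewise estimate: for a divergence-form equation with piecewise $C^{k-1,\alpha}$ coefficients across a flat interface, solutions are piecewise $C^{k,\alpha}$. Near $\Gamma^*$ we would prove this by a nested induction. Differentiate further in $y_1$: for $v=\partial_{y_1}^{j}w$ we have $\partial_i(a_{ij}\partial_j v)=\partial_i F_i$, where $F$ involves lower $y_1$-derivatives times derivatives of $a_{ij}$. Apply Lemma~2.2 of \cite{Chen2024Elliptic} (with right-hand side in divergence form, as in Step~2 of Proposition~\ref{Pr:app-estimate}) to get $\partial^{j}_{y_1}w\in C^{1,\alpha}$ on each side. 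Then recover the normal derivatives from the equation written as
\[
a_{22}\partial_{y_2}^2 w=-\big(a_{11}\partial_{y_1}^2w+2a_{12}\partial_{y_1}\partial_{y_2}w\big)-\partial_{y_i}a_{ij}\,\partial_{y_j}w,
\]
and differentiate this identity repeatedly in $y_2$ within each subdomain. Near $\Gamma^\pm$ we use the standard higher-order boundary Schauder estimates (Theorem~6.19 and the divergence-form analogue in \cite{Gilbarg1997Elliptic}). The covering argument with fixed radius $\underline m/8$ and \eqref{Est:contain} makes all constants independent of $y_1$, yielding $\|\partial_{y_1}\phi\|_{k,\alpha;\N^\pm}\leqslant C$.

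\textbf{Step 3 (normal derivatives of $\phi$).} It remains to control $\partial_{y_2}^{k+1}\phi$. Rewrite \eqref{Equ:y22-phi}:
\[
\partial_{y_2}^2\phi=-(A^2_{\xi_2})^{-1}\big(A^1_{\xi_1}\partial^2_{y_1}\phi+2A^2_{\xi_1}\partial_{y_1}\partial_{y_2}\phi+A^2_SS'+A^2_BB'\big).
\]
Every term on the right lies in $C^{k-1,\alpha}(\overline{\N^\pm})$. This uses Step 1 for the derivatives containing $\partial_{y_1}$, the induction hypothesis for the coefficients, and $S',B'\in C^{k-1,\alpha}$ on each side. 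Hence $\partial_{y_2}^2\phi\in C^{k-1,\alpha}$ on each side, which together with Step 1 gives \eqref{Est:varphi-regularity}.

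The main obstacle is Step 2 at the interface $\Gamma^*$. The cited Lemma~2.2 of \cite{Chen2024Elliptic} only provides $C^{1,\alpha}$ piecewise regularity, so the higher-order version must be bootstrapped by hand. The key point is that differentiation is only allowed tangentially, and the jump conditions are preserved under $\partial_{y_1}$ precisely because the interface is flat and $S,B$ are independent of $y_1$.
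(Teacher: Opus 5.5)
Your proposal is correct and follows essentially the same route as the paper. You differentiate tangentially in $y_1$, which is legitimate because $\Gamma^*$ is flat and $S,B$ depend only on $y_2$; you then apply Lemma~2.2 of \cite{Chen2024Elliptic} and the boundary Schauder estimates to the top-order tangential derivative with a divergence-form right-hand side, and recover the remaining $y_2$-derivatives algebraically from the equation using $A^2_{\xi_2}\geqslant\lambda>0$. The only difference is bookkeeping: the paper applies the transmission estimate once, to $\partial_{y_1}^{n}\phi$, since the induction hypothesis $\phi\in C^{k,\alpha}$ already gives $\partial_{y_1}^{j}w\in C^{1,\alpha}$ for $j\leqslant k-2$, so your nested induction in Step~2 is redundant at the lower levels but harmless.
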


\begin{proof}
	The estimate is known for $k=1$ (see Proposition~\ref{Pr:existence-varphi}). Assume it holds for $k=n-1$ with $n\geqslant 2$. We prove it for $k = n$. Differentiating \eqref{Equ:elliptic} $n$ times with respect to $y_1$ gives
	\begin{equation}\label{Equ:k-y1}
		\partial_{y_i}\left( A^i_{\xi_j} \, \partial_{y_j}\partial_{y_1}^{n} \phi \right)
		= \div \mathbf F \quad \text{in } \N,
	\end{equation}
	where $\mathbf F = (F_1, F_2)$ is defined by
	\begin{equation}
		F_i = -\sum_{m=1}^{n}\binom{n}{m} \partial_{y_1}^m\big(A^i_{\xi_j}\big) \, \partial_{y_j}\partial_{y_1}^{n-m} \phi, \quad i=1,2.
	\end{equation}
	
Although $\mathbf F$ is not identically zero, it contains only derivatives of $\phi$ of order at most $n$, which are already controlled by the induction hypothesis. Hence by applying Lemma 2.2 of \cite{Chen2024Elliptic} and Corollary 8.36 of \cite{Gilbarg1997Elliptic} as in the proof of Proposition~\ref{Pr:app-estimate}, we obtain
	\begin{equation}\label{Est:n-phi}
		\|\partial_{y_1}^{n} \phi\|_{1,\alpha;\N^\pm}
		\leqslant C\left( \|\mathbf F\|_{0,\alpha;\N^\pm} + \|W^\pm\|_{n+1,\alpha;\mathbb R} \right)
		\leqslant C.
	\end{equation}
	
We next extend the above estimate to all partial derivatives of $\phi$ of order $n+1$. Specifically, for every $0 \leqslant m \leqslant n+1$, we claim that
	\begin{equation}\label{Est:arb}
		\|\partial_{y_1}^{n+1-m} \partial_{y_2}^{m} \phi\|_{0,\alpha;\N^\pm} \leqslant C.
	\end{equation}
The cases $m=0$ and $m=1$ are covered by \eqref{Est:n-phi}.
Suppose it holds for some $m = m_0$ with $1 \leqslant m_0 \leqslant n$; we verify it for $m = m_0 + 1$. Indeed, applying $\partial_{y_1}^{n-m_0} \partial_{y_2}^{m_0-1}$ to \eqref{Equ:elliptic} yields
	\begin{equation}
		A^2_{\xi_2} \, \partial_{y_1}^{n-m_0} \partial_{y_2}^{m_0+1} \phi = f,
	\end{equation}
 	where $f$ consists of terms involving derivatives $\partial_{y_1}^t \partial_{y_2}^s \phi$ with $s \leqslant m_0$ and $t + s \leqslant n + 1$. By the induction hypothesis, all such terms are bounded in $C^{0,\alpha}$. This establishes \eqref{Est:arb}. Therefore, \eqref{Est:varphi-regularity} holds for $k = n$, completing the induction.
\end{proof}

\begin{proof}[Proof of Theorem~\ref{Thm:regularity}]
	In view of \eqref{Equ:SB}, the condition \eqref{Cond:reg} is satisfied. Consequently, Proposition~\ref{Pr:regularity-varphi} yields
	\[
	\|\phi\|_{k+1,\alpha;\N^\pm} \leqslant C.
	\]
	Recalling the recovery formula \eqref{Def:flow-recovery}, the flow variables $\mathbf U = (\mathbf u,\rho,p)$ are expressed as smooth functions of $S$, $B$, and $\nabla\phi$. Since $S$ and $B$ are piecewise $C^{k,\alpha}$ and $\phi \in C^{k+1,\alpha}$, it follows that
	\[
	\mathbf U \in C^{k,\alpha}(\overline{\N^+}) \cap C^{k,\alpha}(\overline{\N^-}).
	\]
	Returning to the physical coordinates via the inverse Euler--Lagrange transformation $\mathcal S$ (which is a $C^{k+1,\alpha}$ diffeomorphism by virtue of the regularity of $\phi$), we conclude that the corresponding physical solution $\mathbf U$ enjoys the same regularity. This completes the proof of Theorem~\ref{Thm:regularity}.
\end{proof}

\section{Downstream asymptotic behavior}\label{Sec:outlet}
In this section we study the asymptotic behavior of the flow in the downstream direction, i.e., as $x_1\to+\infty$. To this end, we impose an additional assumption on the nozzle geometry: there exist constants $a^+,a^-\in\mathbb R$ such that
\begin{equation}\label{Cond:r}
	W^+(x_1)\to a^+,\qquad W^-(x_1)\to a^- \quad \text{as } x_1\to+\infty.
\end{equation}

For $-\infty\leqslant L<R\leqslant +\infty$, we introduce the truncated domain
\begin{equation*}
	\N_{L,R}:=\N\cap\{L<y_1<R\},
\end{equation*}
and we further denote
\begin{equation*}
	\N^\pm_{L,R}:=\N^\pm\cap\{L<y_1<R\}.
\end{equation*}

\begin{theorem}\label{Thm:asymptotic}
	Suppose that $W^\pm$ satisfy \eqref{Cond:r}, and let $\phi$ be the solution of Problem~\ref{Problem:elliptic}. Then there exists a function
	\[
	\phi_r \in C^{2,\alpha}\big([-m^-,0]\big) \cap C^{2,\alpha}\big([0,m^+]\big) \cap C\big([-m^-,m^+]\big)
	\]
	such that
	\begin{equation}\label{Est:r}
		\lim_{y_1\to+\infty}
		\big\|\nabla\phi(y_1,\cdot)-\nabla\phi_r\big\|_{L^\infty([-m^-,m^+])}
		=0.
	\end{equation}
	
	Moreover, if there exist constants $\beta>0$ and $C_0>0$ such that
	\begin{equation}\label{Cond:decay}
		|W^+(y_1)-a^+|+|W^-(y_1)-a^-|
		\leqslant \frac{C_0}{(1+y_1)^\beta},
		\qquad \forall\, y_1>0,
	\end{equation}
	then there exists a constant $C>0$ such that
	\begin{equation}\label{Est:beta}
		\|\phi-\phi_r\|_{2,\alpha;\N^\pm_{Y,\infty}}
		\leqslant \frac{C}{(1+Y)^\beta},
		\qquad \forall\, Y>0.
	\end{equation}
\end{theorem}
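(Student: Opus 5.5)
We construct $\phi_r$ directly as the solution of the one-dimensional downstream problem and then compare $\phi$ with it. Set $h^\pm_r=\pm1+\zeta a^\pm$.

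\textbf{Step 1: the downstream state.} We look for $\phi_r=\phi_r(y_2)$ solving the $y_1$-independent version of \eqref{Equ:elliptic}:
\[
\frac{\dif}{\dif y_2}A^2\big(S,B,(0,\phi_r')\big)=0,\qquad \phi_r(-m^-)=h_r^-,\quad \phi_r(m^+)=h_r^+ .
\]
Thus the pressure $p_r=A^2(S,B,(0,\phi_r'))$ is a constant $p_\infty$. Given $p_\infty$, the density is $\rho=(\gamma p_\infty/((\gamma-1)S))^{1/\gamma}$, and $\phi_r'=1/\sqrt{2Q(S,B,\rho)}$. The constant $p_\infty$ is then fixed by the single scalar condition $\int_{-m^-}^{m^+}\phi_r'\,\dif y_2=h_r^+-h_r^-$.

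Rather than solving this scalar equation a priori, which would require checking subsonicity, we obtain $\phi_r$ as a limit. By Proposition~\ref{Pr:existence-varphi} and the bound \eqref{Est:U-alpha}, the translates $\phi(\cdot+n,\cdot)$ are bounded in $C^{2,\alpha}(\overline{\N^\pm})$. A subsequence therefore converges locally to some $\phi_\infty$. This limit solves \eqref{Equ:elliptic} in the flat strip with boundary data $h_r^\pm$ and satisfies $\mathcal K_\infty\Subset\mathcal D$, since compactness of $\mathcal K^{(\zeta)}$ is preserved under the limit.

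Next we show that $\phi_\infty$ is independent of $y_1$. The derivative $w=\partial_{y_1}\phi_\infty$ is bounded and solves \eqref{Equ:w_second} with $w=0$ on $\Gamma^\pm$. The Liouville-type energy argument from the uniqueness step of Proposition~\ref{Pr:linear} then gives $w\equiv0$. Hence $\phi_\infty=\phi_r(y_2)$, and this $\phi_r$ automatically has the stated regularity.

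To see that the limit does not depend on the subsequence, observe that any two such limits are $y_2$-profiles with the same data. Their difference solves a uniformly elliptic divergence-form ODE, $(a\,\psi')'=0$ with $\psi=0$ at the endpoints, so it vanishes. The full family of translates therefore converges. The $C^{2,\alpha}$ bound upgrades local convergence to the uniform convergence of gradients in \eqref{Est:r}, exactly as in the argument of Proposition~\ref{Pr:asymptotic}.

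\textbf{Step 2: decay rate.} Let $\psi=\phi-\phi_r$. As in \eqref{Equ:diff-zeta-0}, $\psi$ solves
\[
\partial_i(a_{ij}\partial_j\psi)=0 \quad\text{in } \N,
\]
with uniformly elliptic coefficients that are piecewise $C^{1,\alpha}$, and on $\Gamma^\pm$ we have $\psi=\zeta(W^\pm-a^\pm)=O((1+y_1)^{-\beta})$.

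The main obstacle is a pointwise bound $|\psi|\le C(1+y_1)^{-\beta}$ under the discontinuous coefficients. We plan a barrier of the form
\[
\Phi=K(1+y_1)^{-\beta}\,v(y_2)+K'e^{-\mu y_1}.
\]
Here $v>0$ solves a one-dimensional problem $(a_{22}v')'\le-1$ in a form adapted to the jump. Choosing $v=\int_{-m^-}^{y_2} q$ with $q$ piecewise smooth ensures that the flux $a_{2j}\partial_j\Phi$ is continuous across $\Gamma^*$, so $\Phi$ is a weak supersolution across the interface. In the $y_1$-direction, the terms produced by differentiating $(1+y_1)^{-\beta}$ are of order $(1+y_1)^{-\beta-1}$, which is small relative to $(1+y_1)^{-\beta}$ for $y_1$ large, so they are absorbed. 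The exponential term handles the left boundary $y_1=Y_0$, where $\psi$ is merely bounded.

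The coefficients $a_{12}$ and $a_{22}$ depend on $y_1$, but they converge to their limiting values by Step~1. We will therefore construct $q$ for the limiting coefficients and absorb the error for $y_1\ge Y_0$ large. Applying the comparison principle on $\N_{Y_0,R}$ and letting $R\to\infty$, using that $\psi\to0$ by Step~1, gives $|\psi|\le C(1+y_1)^{-\beta}$. Finally, the local Schauder estimates of Proposition~\ref{Pr:app-estimate}, applied on unit boxes around each $y_1\ge Y$, upgrade this to the $C^{2,\alpha}$ bound \eqref{Est:beta}. The boundary data contribute $\|W^\pm-a^\pm\|_{2,\alpha}$ on $[Y-1,\infty)$; this is controlled by interpolating the decay \eqref{Cond:decay} against the $C^{2,\alpha}$ bound on $W^\pm$, or directly if the decay is assumed in $C^{2,\alpha}$.
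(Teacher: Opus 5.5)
Your Step~1 matches the paper's argument: compactness of the translates $\phi(\cdot+n,\cdot)$, then an energy argument for $y_1$-independence. Differentiating in $y_1$ instead of subtracting shifts, and using one-dimensional uniqueness to get convergence of the whole family, are harmless and arguably cleaner.

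\textbf{The gap is in Step~2, at $\Gamma^*$.} For $\Phi$ to be a weak supersolution you need $\partial_i(a_{ij}\partial_j\Phi)\leqslant 0$ in $\N^\pm$ \emph{and} $[a_{2j}\partial_j\Phi]\leqslant 0$ pointwise on $\Gamma^*$. The interior margin cannot absorb an interface defect: for nonnegative test functions supported in $B_r$ around a point of $\Gamma^*$, the bulk term is $O(r^2)$ while the interface term is of order $r$.

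If $q$ is built for the limiting coefficients with exactly continuous flux, then for $\Phi_1=K(1+y_1)^{-\beta}v(y_2)$ the actual jump is
\[
[a_{2j}\partial_j\Phi_1]=K(1+y_1)^{-\beta}\Big(\big[(a_{22}-a_{22}^\infty)v'\big]-\frac{\beta\,[a_{21}]\,v(0)}{1+y_1}\Big).
\]
This tends to zero but has no sign, and your design leaves no margin to absorb it. Exact flux continuity with the true coefficients is not achievable in general for $v=v(y_2)$, since $[a_{21}]$ and $[a_{22}]$ vary with $y_1$.

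The missing idea is the paper's singular barrier: build a strictly negative conormal jump (a concave kink) into the profile. For example, require $(a_{22}^\infty v')'\leqslant -1$ on each side \emph{and} $[a_{22}^\infty v']\leqslant -1$ at $y_2=0$. The paper's profile $e^{C_1\overline m}-e^{C_1|y_2|}+1$ does exactly this. Because its second derivative dominates its first by the factor $C_1$, it works for all $y_1>0$ using only bounds on $a_{ij}$ and $\nabla a_{ij}$. Your version, once fixed, would instead need $Y_0$ large and the $C^1$ convergence $a_{ij}\to a_{ij}^\infty$ from Step~1.

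\textbf{Smaller points.}
\begin{enumerate}
\item The term $K'e^{-\mu y_1}$ has the wrong sign. We have $\partial_i(a_{ij}\partial_j e^{-\mu y_1})=(\mu^2a_{11}-\mu\,\partial_ia_{i1})e^{-\mu y_1}>0$ for large $\mu$, and it also adds the sign-indefinite jump $-\mu K'[a_{21}]e^{-\mu y_1}$. Drop it and enlarge $K$ so that $K(1+Y_0)^{-\beta}\min v\geqslant\sup|\psi|$; this is legitimate because $Y_0$ is fixed.
\item As written, $v=\int_{-m^-}^{y_2}q$ vanishes on $\Gamma^-$, where $|\psi|$ decays only like $(1+y_1)^{-\beta}$. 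You need $v\geqslant c>0$ on all of $[-m^-,m^+]$.
\item Your comparison with $\Phi+\sup_{\{y_1=R\}}|\psi|$ on $\N_{Y_0,R}$, letting $R\to\infty$, is a valid substitute for the paper's auxiliary problems $\psi^R$.
\item The interpolation route in your last sentence does not work. $L^\infty$ decay plus a $C^{2,\alpha}$ bound on $W^\pm$ gives no decay of the $C^{2,\alpha}$ seminorm. Since the tangential derivatives of $\psi$ on $\Gamma^\pm$ are those of $\zeta(W^\pm-a^\pm)$, \eqref{Est:beta} in the $C^{2,\alpha}$ norm genuinely requires the decay \eqref{Cond:decay} in $C^{2,\alpha}$. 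The paper's final Schauder step also uses this assumption tacitly.
\end{enumerate}
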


The proof is divided into three steps.

\medskip\noindent\textbf{Step 1: Existence of the limiting profile $\phi_r$.} Consider the boundary value problem
\begin{equation}\label{Equ:inf}
	\Lop(\phi_r)=0 \quad\text{in } \N,\qquad
	\phi_r=\pm1+\zeta a^\pm \quad\text{on } \Gamma^\pm.
\end{equation}

\begin{proposition}\label{Pr:existence-r}
	Suppose $\zeta \in (\zeta^-,\zeta^+)$. Then Eq.~\eqref{Equ:inf} admits a unique solution 
	\[\phi_r\in C^{2,\alpha}(\overline{\N^+})\cap C^{2,\alpha}(\overline{\N^-})\cap C(\overline{\N}).
	\]
	 Moreover, $\phi_r$ is independent of $y_1$, i.e. $\phi_r=\phi_r(y_2)$.
\end{proposition}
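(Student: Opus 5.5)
Since the boundary data $\pm1+\zeta a^\pm$ in \eqref{Equ:inf} do not depend on $y_1$, I will look for $\phi_r$ among functions of $y_2$ alone. For such a function, $\partial_{y_1}\phi_r=0$ and $A^1(S,B,\nabla\phi_r)=0$. The equation $\Lop(\phi_r)=0$ then reduces to
\[
\frac{d}{dy_2}A^2\big(S(y_2),B(y_2),(0,\phi_r'(y_2))\big)=0 ,
\]
understood weakly across $y_2=0$. This says the pressure is a constant $\bar p$ on $[-m^-,m^+]$, which is exactly the continuity $[p]=0$ across $\Gamma^*$.

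\textbf{Existence: a one-parameter shooting in $\bar p$.} For a given $\bar p$, the density is $\rho(y_2)=\big(\gamma\bar p/((\gamma-1)S(y_2))\big)^{1/\gamma}$. Bernoulli's law \eqref{Equ:X} with $\xi_1=0$ then gives
\[
\phi_r'(y_2)=Q\big(S,B,\rho\big)^{-1/2}.
\]
This is admissible only while $\rho$ stays in the subsonic branch $(\rho_{\rm cr},\rho^+)$. Integrating, I set
\[
\phi_r(y_2)=-1+\zeta a^-+\int_{-m^-}^{y_2}\phi_r'\,ds .
\]
The remaining condition is $F(\bar p):=\int_{-m^-}^{m^+}Q^{-1/2}\,ds=2+\zeta(a^+-a^-)$. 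Along the subsonic branch, $F$ is strictly monotone in $\bar p$, because $\partial_\rho Q<0$ there. So uniqueness of $\bar p$ is immediate.

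\textbf{The main obstacle: showing the target value is attained.} I would not try to track the range of $F$ directly. Instead I would use the solution $\phi$ of Problem~\ref{Problem:elliptic}, which is uniformly subsonic with $\phi_{y_2}$ bounded. Take translates $\phi(\cdot+y_1^n,\cdot)$ with $y_1^n\to+\infty$. These are bounded in $C^{2,\alpha}(\overline{\N^\pm})$ by Proposition~\ref{Pr:existence-varphi}. Along a subsequence they converge in $C^{2}_{loc}$ on each side to a solution $\tilde\phi$ of \eqref{Equ:inf} whose data $(S,B,\nabla\tilde\phi)$ lie in $\mathcal K^{(\zeta)}\Subset\mathcal D$.

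Next I show $\tilde\phi$ is independent of $y_1$. Its translate $\tilde\phi(\cdot+h,\cdot)$ solves the same problem, and their difference satisfies a linear uniformly elliptic divergence equation of the type used in Proposition~\ref{Pr:linear}, with zero Dirichlet data. The uniqueness argument of Step~2 of Proposition~\ref{Pr:linear} applies to bounded solutions with bounded gradients. Hence $\tilde\phi(\cdot+h,\cdot)=\tilde\phi$ for every $h$, so $\tilde\phi=\tilde\phi(y_2)$.

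\textbf{Conclusion.} The function $\tilde\phi$ provides a constant pressure $\bar p$ in the subsonic range with $F(\bar p)$ equal to the target value. Thus existence holds, and $\phi_r:=\tilde\phi$. Its regularity is piecewise $C^{2,\alpha}$ because $\phi_r'=Q^{-1/2}$ with $S,B$ piecewise $C^{1,\alpha}$, and $\phi_r$ is continuous at $y_2=0$ by construction.

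For uniqueness among all solutions in the stated class, and not only $y_1$-independent ones, I would repeat the translation-invariance argument. Any solution in this class is bounded in $C^{2,\alpha}(\overline{\N^\pm})$ with $(S,B,\nabla\phi_r)\Subset\mathcal D$. The difference of two solutions then solves a linear uniformly elliptic problem with zero boundary data, and the energy argument of Proposition~\ref{Pr:linear} gives that the difference vanishes identically. In particular, every such solution is independent of $y_1$.
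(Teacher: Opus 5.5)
Your proposal is correct and is essentially the paper's argument: existence comes from the $C^{2,\alpha}$-bounded downstream translates $\phi(\cdot+n,\cdot)$ and Arzel\`a--Ascoli, and $y_1$-independence and uniqueness come from the energy argument of Proposition~\ref{Pr:linear}, applied to the difference of $\phi_r$ and its translate (resp.\ of two solutions). Your preliminary reduction to a constant pressure $\bar p$ with a strictly monotone $F(\bar p)$ is a correct extra characterization of $\phi_r$ that the paper does not need (note only that $X(0,\xi_2)=1/(2\xi_2^2)$, so $\phi_r'=(2Q)^{-1/2}$ rather than $Q^{-1/2}$, which changes nothing).
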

\begin{proof}
	For $n\in\mathbb N$, define
	\[
	\phi^n(y_1,y_2):=\phi(y_1+n,y_2).
	\]
	Since $\phi^n$ is uniformly bounded in $C^{2,\alpha}(\overline{\N^+})\cap C^{2,\alpha}(\overline{\N^-})\cap C(\overline{\N})$ with respect to $n$, the Arzel\`a--Ascoli theorem yields a subsequence (still denoted by $\phi^n$) and a function $\phi_r\in C^{2,\alpha}(\overline{\N^+})\cap C^{2,\alpha}(\overline{\N^-})\cap C(\overline{\N})$ such that, as $n\to\infty$,
	\[
	\phi^n\to\phi_r \quad\text{in } C_{\mathrm{loc}}(\overline{\N}),
	\]
	and
	\[
	\phi^n\to\phi_r \quad\text{in } C^2_{\mathrm{loc}}(\overline{\N^+})\cap C^2_{\mathrm{loc}}(\overline{\N^-}).
	\]
	It follows that $\phi_r$ is a solution of \eqref{Equ:inf}. Uniqueness follows from the same argument as in Proposition~\ref{Pr:existence-varphi}.
	
	To show that $\phi_r$ is independent of $y_1$, fix an arbitrary shift $y^\delta\in\mathbb R$, set $\phi_r^\delta(y_1,y_2):=\phi_r(y_1+y^\delta,y_2)$, and define
	\[
	\psi_r^\delta(y_1,y_2):=\phi_r^\delta(y_1,y_2)-\phi_r(y_1,y_2).
	\]
	Then $\psi_r^\delta$ satisfies
	\[
	\partial_{y_i}\big(a_{ij}^\delta\,\partial_{y_j}\psi_r^\delta\big)=0 \quad\text{in } \N,
	\qquad
	\psi_r^\delta=0 \quad\text{on } \Gamma^\pm,
	\]
	where $a_{ij}^\delta$ is defined analogously to $a_{ij}^{(\phi)}$ in \eqref{Equ:diff-zeta-0}, with $\phi$ and $\phi_0$ replaced by $\phi_r^\delta$ and $\phi_r$, respectively.
	The same energy argument as in Proposition~\ref{Pr:linear} yields $\psi_r^\delta\equiv0$. This completes the proof of Proposition~\ref{Pr:existence-r}.
\end{proof}

\medskip\noindent\textbf{Step 2: Convergence to the limiting profile.}
With $\phi_r$ at hand, the convergence \eqref{Est:r} follows by the same argument as in Proposition~\ref{Pr:asymptotic}. We therefore state the following result without proof.

\begin{proposition}\label{Pr:asymptotic-r}
	Let $\phi_r$ be the profile constructed in Proposition~\ref{Pr:existence-r}. Then
	\[
	\lim_{y_1\to +\infty}
	\big\|\nabla\phi(y_1,\cdot)-\nabla\phi_r\big\|_{L^\infty([-m^-,m^+])}=0.
	\]
\end{proposition}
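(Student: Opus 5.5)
We argue by compactness and uniqueness, exploiting the translation invariance of \eqref{Equ:inf}, rather than redoing the cut-off energy estimate of Proposition~\ref{Pr:asymptotic}. Suppose the claim fails. Then there are $\tau>0$ and a sequence $t_n\to+\infty$ such that
\[
\big\|\nabla\phi(t_n,\cdot)-\nabla\phi_r\big\|_{L^\infty([-m^-,m^+])}\geqslant\tau \quad\text{for all } n.
\]
Consider the translates $\phi^{t_n}(y_1,y_2):=\phi(y_1+t_n,y_2)$. By Proposition~\ref{Pr:existence-varphi} (together with \eqref{Est:U-alpha}), they are uniformly bounded in $C^{2,\alpha}(\overline{\N^+})\cap C^{2,\alpha}(\overline{\N^-})\cap C(\overline{\N})$. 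By Arzel\`a--Ascoli, a subsequence converges in $C_{\mathrm{loc}}(\overline{\N})$ and in $C^2_{\mathrm{loc}}(\overline{\N^\pm})$ to some $\tilde\phi$ that enjoys the same piecewise $C^{2,\alpha}$ bound.

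Next we identify the limit. Passing to the limit in the weak form \eqref{Def:Eop-weak} is legitimate because $\nabla\phi^{t_n}\to\nabla\tilde\phi$ locally uniformly on each side of $\Gamma^*$ and $\mathbf A$ is smooth on $\mathcal D$. Hence $\Lop(\tilde\phi)=0$ in $\N$. The boundary data $\phi^{t_n}=\pm1+\zeta W^\pm(y_1+t_n)$ on $\Gamma^\pm$ converge to $\pm1+\zeta a^\pm$ by \eqref{Cond:r}. Moreover, every value $(S,B,\nabla\tilde\phi)$ lies in the closure of $\{(S,B,\nabla\phi)\}$, that is, in $\mathcal K^{(\zeta)}$, which is a compact subset of $\mathcal D$. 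So $\tilde\phi$ belongs to the admissible class in which Proposition~\ref{Pr:existence-r} asserts uniqueness for \eqref{Equ:inf}. We conclude $\tilde\phi=\phi_r$. In other words, the limit does not depend on the subsequence, so the whole family of translates converges to $\phi_r$.

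Taking $y_1=0$ and using $C^1$ convergence on the compact set $\{0\}\times[-m^-,0]$ and $\{0\}\times[0,m^+]$ separately (one-sided traces), we obtain
\[
\|\nabla\phi(t_n,\cdot)-\nabla\phi_r\|_{L^\infty}\to0
\]
along the subsequence, contradicting the choice of $\tau$.

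The one delicate point is the uniqueness invoked in Proposition~\ref{Pr:existence-r}. Its energy argument needs the averaged coefficients $a_{ij}=\int_0^1 A^i_{\xi_j}\big(S,B,\nabla\phi_r+s\nabla(\tilde\phi-\phi_r)\big)\,\dif s$ to be uniformly elliptic, that is, the segment between the two gradients must stay inside a compact subset of $\mathcal D$. If this has to be checked rather than taken from Proposition~\ref{Pr:existence-r}, we would argue as follows.
\begin{enumerate}[label=\textup{(\alph*)}, leftmargin=2em]
\item First show $\tilde\phi=\phi_r$ only on the set where $\nabla\tilde\phi$ is close to $\nabla\phi_r$.
\item Then use a continuity argument in the shift parameter: the set of shifts where the translate stays in an $\eps$-neighbourhood of $\phi_r$, in the sense of $\mathcal M_\eps$, is open and closed.
\item On that neighbourhood, ellipticity of the $a_{ij}$ is supplied by Lemma~\ref{Lm:N-elliptic} and Proposition~\ref{Pr:a-ij-uniform}.
\end{enumerate}
As a fallback, we would instead repeat verbatim the cut-off energy estimate of Proposition~\ref{Pr:asymptotic} for $\psi=\phi-\phi_r$. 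That estimate uses boundary data $\zeta(W^\pm-a^\pm)$, which tend to zero as $y_1\to+\infty$, and the same slowly widening cut-off $\sigma_L$. The only extra input is the same ellipticity of the interpolated coefficients for large $y_1$.
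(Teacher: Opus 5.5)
Your argument is correct, but it takes a different route from the paper's. The paper gives no separate proof. It only says the convergence follows ``by the same argument as in Proposition~\ref{Pr:asymptotic}''. That means the cut-off energy estimate with the widening cut-off $\sigma_L$, applied to $\psi=\phi-\phi_r$ with boundary data $\zeta(W^\pm-a^\pm)\to0$. This gives smallness of $\int|\nabla\psi|^2$ on unit strips, and the $C^{2,\alpha}$-bound contradiction step then upgrades it to uniform convergence; this is exactly your fallback.

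Your main argument instead treats the statement as an $\omega$-limit statement:
\begin{itemize}
\item translates along any $t_n\to+\infty$ are precompact in $C^2_{\mathrm{loc}}(\overline{\N^\pm})$;
\item every limit solves \eqref{Equ:inf} weakly, with the transmission condition across $\Gamma^*$ passing to the limit in the weak form, and its gradients lie in $\mathcal K^{(\zeta)}$;
\item the uniqueness in Proposition~\ref{Pr:existence-r} identifies the limit as $\phi_r$.
\end{itemize}
So Proposition~\ref{Pr:asymptotic-r} becomes a corollary of the same compactness argument used to build $\phi_r$, extended from integer shifts to arbitrary ones.

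The trade-off is as follows. Your route is softer and needs no estimates, but it gives no quantitative information. The paper's route works with the linear equation for the difference and yields smallness controlled by $\sigma_L$, i.e.\ by the decay of $W^\pm-a^\pm$. It also does not need the uniqueness part of Proposition~\ref{Pr:existence-r}.

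Both routes rest on the same tacit ingredient: uniform ellipticity of the averaged coefficients between two gradients that are not a priori close. Your workaround (a)--(c) for this is unnecessary. For fixed $\boldsymbol{\eta}$, the slice $\{\bxi : (\boldsymbol{\eta},\bxi)\in\mathcal D\}=\{\xi_2>\sqrt{(1+\xi_1^2)/(2Q_{\max}(\boldsymbol{\eta}))}\}$ is the epigraph of a convex function, hence convex. Therefore all segments joining points of $\mathcal K^{(\zeta)}$ with the same $\boldsymbol{\eta}$ fill a compact subset of $\mathcal D$. On that set $\mathrm D_{\bxi}\mathbf A$ is uniformly positive definite, since it is symmetric and satisfies $A^1_{\xi_1}>0$ and $\det\mathrm D_{\bxi}\mathbf A>0$ at every point of $\mathcal D$.
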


\medskip\noindent\textbf{Step 3: Decay rate estimates.}
We now establish the refined estimate \eqref{Est:beta} under the additional decay assumption \eqref{Cond:decay}. Define
\[
\psi(y_1,y_2):=\phi(y_1,y_2)-\phi_r(y_2).
\]
Then $\psi$ satisfies
\begin{equation}\label{Equ:psi}
	\partial_{y_i}\big(a_{ij}\partial_{y_j}\psi\big)=0 \quad\text{in } \N,
	\qquad
	\psi=\zeta(W^\pm-a^\pm) \quad\text{on } \Gamma^\pm,
\end{equation}
where $a_{ij}$ is defined as in \eqref{Equ:diff-zeta-0} with $\phi_0$ replaced by $\phi_r$.

To derive the decay estimate, we consider the associated linearized problem
\begin{equation}\label{Equ:infa}
	\partial_{y_i}\big(a_{ij}\partial_{y_j}\psi^R\big)=0 \quad\text{in } \N_{0,R},
	\qquad
	\psi^R=g \quad\text{on } \partial\N_{0,R},
\end{equation}
with boundary data
\[
g(y_1,y_2):=\zeta\mu(y_1)\underline{\psi}(y_1,y_2)+\big(1-\mu(y_1)\big)\psi(y_1,y_2),
\]
where $\mu\in C^\infty(\mathbb R)$ satisfies $\mu=0$ for $y_1<1$ and $\mu=1$ for $y_1>2$, and
\begin{equation}\label{Def:g-r}
	\underline{\psi}(y_1,y_2)
	:=\chi(y_2)\big(W^+(y_1)-a^+\big)
	+\big(1-\chi(y_2)\big)\big(W^-(y_1)-a^-\big).
\end{equation}
Here $\chi$ denotes the same cutoff function introduced in \eqref{Def:g}. It is straightforward to verify that
\[
g=\psi \quad\text{on } \{y_1=0\} \quad\text{and}\quad g=\zeta(W^\pm-a^\pm) \quad\text{on } \Gamma^\pm.
\]
\begin{proposition}\label{Pr:decay-estimate}
	Assume that \eqref{Cond:decay} holds. Then for every $R>0$, problem \eqref{Equ:infa} admits a unique solution $\psi^R\in H^1(\N_{0,R})$. Moreover, for any $Y$ with $0<Y<R$,
	\[
	\|\psi^R\|_{0,0;\N_{Y,R}}\leqslant \frac{C}{(1+Y)^\beta},
	\]
	where $C$ is independent of $R$.
\end{proposition}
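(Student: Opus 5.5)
Existence and uniqueness of $\psi^R\in H^1(\N_{0,R})$ come from the standard theory exactly as in Proposition~\ref{Pr:app-estimate}. The coefficients $a_{ij}$ are uniformly elliptic and bounded: they are averages of $\mathrm D_{\bxi}\mathbf A$ along segments between $\nabla\phi$ and $\nabla\phi_r$. Both $(S,B,\nabla\phi)$ and $(S,B,\nabla\phi_r)$ lie in compact subsets of $\mathcal D$. Indeed, $\phi_r$ is a $C^2_{\mathrm{loc}}$-limit of translates of $\phi$, so its gradient set is contained in the closure of that of $\phi$. The segments stay in a compact subset of $\mathcal D$ because, by Proposition~\ref{Pr:asymptotic-r}, $\nabla\phi-\nabla\phi_r$ is small for large $y_1$. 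For bounded $y_1$ I would instead write $\psi$'s equation directly as the difference $\Lop(\phi)-\Lop(\phi_r)=0$, which is monotone in the gradient. The weak maximum principle then gives $\|\psi^R\|_{0,0;\N_{0,R}}\leqslant\|g\|_{0,0}\leqslant C$, uniformly in $R$, since $\psi$ and $\underline\psi$ are bounded.

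The decay will come from comparison with a barrier. I would look for a supersolution of the form
\[
\Phi(y_1,y_2)=\frac{K}{(1+y_1)^\beta}\,h(y_2)+\frac{K'}{(1+y_1)^\beta},
\]
or, more robustly, $\Phi=K(1+y_1)^{-\beta}\big(2-e^{-\nu(y_2+m^-)}\big)$ with $\nu$ large; this exponential in $y_2$ plays the role of the singular barrier announced in the introduction. The point is that $y_1$-derivatives of $(1+y_1)^{-\beta}$ produce terms of relative size $(1+y_1)^{-1}$ and $(1+y_1)^{-2}$. These are dominated, for $y_1\geqslant Y_0$, by the strictly negative term $-a_{22}\nu^2 e^{-\nu(\cdot)}$ coming from $\partial_{y_2}^2$.

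The main obstacle is that $a_{ij}$ is discontinuous across $\Gamma^*$, so $\Phi$ is only a supersolution in the weak sense. Since $\Phi$ is smooth, the weak form carries an interface term
\[
\int_{\Gamma^*}[a_{2j}\partial_{y_j}\Phi]\,v\,\dif y_1 .
\]
I would handle this by making $\Phi$ piecewise smooth in $y_2$ with a kink at $y_2=0$. Concretely, take $h^\pm(y_2)$ separately on $\N^\pm$, continuous at $0$, with the one-sided slopes chosen so that the conormal jump $[a_{2j}\partial_j\Phi]\leqslant 0$ on $\Gamma^*$; this is possible since $a_{22}\geqslant\lambda>0$ on both sides and $a_{21}\partial_1\Phi$ is of lower order. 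The sign should be the one that makes $\Phi$ a weak supersolution, i.e.\ $\int a_{ij}\partial_j\Phi\,\partial_i v\geqslant 0$ for $v\geqslant0$.

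Comparison then finishes the argument. Choose $K$ large so that $\Phi\geqslant|g|$ on $\partial\N_{0,R}$. On $\Gamma^\pm$ this uses \eqref{Cond:decay}. On $\{y_1=0\}$ and on the region $\{y_1<Y_0\}$ it uses the uniform $L^\infty$ bound from the first step. On $\{y_1=R\}$ it also uses \eqref{Cond:decay}, because $g=\zeta\underline\psi$ there. The weak maximum principle applied to $\pm\psi^R-\Phi$ on $\N_{Y_0,R}$ then gives $|\psi^R|\leqslant\Phi\leqslant C(1+y_1)^{-\beta}$, with constants independent of $R$. Taking the supremum over $y_1>Y$ gives the claim.
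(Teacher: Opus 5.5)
Your proposal is correct and is essentially the paper's proof. The paper uses the barrier $v=C_3\big(e^{C_1\overline m}-e^{C_1|y_2|}+1\big)(C_2+y_1)^{-\beta}$, whose kink at $y_2=0$ is exactly your device for forcing $[a_{2j}\partial_{y_j}v]\leqslant 0$ on $\Gamma^*$; its large shift $C_2$ plays the role of your cutoff $Y_0$, and it also concludes with the weak maximum principle. Your separate treatment of bounded $y_1$ is unnecessary, and the appeal to monotonicity of $\Lop(\phi)-\Lop(\phi_r)$ would not by itself make sense of the linear problem \eqref{Equ:infa}: each slice $\{\bxi : (\boldsymbol{\eta},\bxi)\in\mathcal D\}=\{\xi_2>\sqrt{(\xi_1^2+1)/(2Q_{\max}(\boldsymbol{\eta}))}\}$ is convex, so the segments between $\nabla\phi_r$ and $\nabla\phi$ stay in a compact subset of $\mathcal D$ for all $y_1$.
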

\begin{proof}
	Existence and uniqueness follow from the standard theory for linear elliptic equations (see, e.g., Theorems 8.1 and 8.3 in \cite{Gilbarg1997Elliptic}). To derive the decay estimate, introduce the barrier function
	\begin{equation}
		v(y_1,y_2):=C_3\frac{e^{C_1\overline m}-e^{C_1|y_2|}+1}{(C_2+y_1)^\beta},
	\end{equation}
	where the positive constants $C_1$, $C_2$, and $C_3$ are to be determined below.
	
	A direct computation gives
	\begin{equation*}
		\begin{aligned}
			\mathcal L_{\phi,\phi_r} v
			&:=\partial_{y_i}(a_{ij}\partial_{y_j}v) \\
			&=
			\Bigg[
			- a_{22}\frac{C_1^2 e^{C_1|y_2|}}{e^{C_1\overline m}-e^{C_1|y_2|}+1}
			\Bigg(1\mp\frac{2\beta a_{12}}{a_{22}C_1(C_2+y_1)}
			\pm\frac{\partial_{y_i}a_{i2}}{a_{22}C_1}\Bigg) \\
			&\quad+
			\Bigg(a_{11}\frac{\beta(\beta+1)}{(C_2+y_1)^2}
			-\frac{\beta\,\partial_{y_i}a_{i1}}{C_2+y_1}\Bigg)
			\Bigg] v,
			\qquad y\in \N^\pm.
		\end{aligned}
	\end{equation*}
	First fix $C_2>1$ and choose $C_1>0$ sufficiently large so that
	\[
	1\pm\frac{2\beta a_{12}}{a_{22}C_1(C_2+y_1)}
	\mp\frac{\partial_{y_i}a_{i2}}{a_{22}C_1}
	\geqslant \frac12,
	\]
	which is legitimate since $a_{ij}$ and their derivatives are bounded. Then take $C_2$ large enough so that
	\[
	\left|
	a_{11}\frac{\beta(\beta+1)}{(C_2+y_1)^2}
	-\frac{\beta\,\partial_{y_i}a_{i1}}{C_2+y_1}
	\right|
	\leqslant \frac14 a_{22}\frac{C_1^2}{e^{C_1\overline m}},
	\qquad y_1>0,
	\]
	which is again legitimate by the boundedness of $a_{ij}$ and their derivatives. 	With these choices, we have
	\[
	\mathcal L_{\phi,\phi_r} v
	\leqslant
	-\frac{\lambda C_1^2}{4e^{C_1\overline m}}\,v
	<0
	\qquad\text{in } \N^\pm.
	\]
	
	On the contact discontinuity $\Gamma^*$, the jump of the conormal derivative satisfies
	\begin{equation*}
		\bigl[a_{2j}\partial_{y_j}v\bigr]
		=
		\Big[
		-(a_{22}^++a_{22}^-)\frac{C_1 e^{C_1|y_2|}}{e^{C_1\overline m}-e^{C_1|y_2|}+1}
		-(a_{21}^+-a_{21}^-)\frac{\beta}{C_2+y_1}
		\Big] v,
		\qquad y_1>0,
	\end{equation*}
	where $a_{ij}^\pm$ denote the traces of $a_{ij}$ on $\Gamma^*$ from the $\N^\pm$ sides, respectively.
	Taking $C_2$ even larger if necessary gives
	\[
	\left|(a_{21}^+-a_{21}^-)\frac{\beta}{C_2+y_1}\right|
	\leqslant \frac{\lambda C_1}{e^{C_1\overline m}}.
	\qquad y_1>0,
	\]
	therefore,
	\[
	\bigl[a_{2j}\partial_{y_j}v\bigr]
	\leqslant
	-\frac{\lambda C_1}{e^{C_1\overline m}}\,v
	<0
	\qquad\text{on } \Gamma^*.
	\]
	
	Finally, choose $C_3>0$ sufficiently large, independently of $R$, so that
	\[
	v\geqslant |g| \quad\text{on } \partial\N_{0,R}.
	\]
	
With the constants chosen as above, we deduce that
\[
\mathcal L_{\phi,\phi_r}(v\pm\psi^R)=\mathcal L_{\phi,\phi_r} v< 0 \quad\text{in } \N_{0,R},
\qquad
v\pm\psi^R\geqslant 0 \quad\text{on } \partial\N_{0,R}.
\]
The weak maximum principle therefore yields 
\[
\sup_{\N_{Y,R}}|\psi^R|
\leqslant \sup_{\N_{Y,R}}v
\leqslant \frac{C}{(1+Y)^\beta},
\qquad 0<Y<R,
\]
where $C$ is a positive constant independent of $R$. This completes the proof.
\end{proof}

Now we proceed with the proof of Theorem~\ref{Thm:asymptotic}. 
\begin{proof}[Proof of Theorem \ref{Thm:asymptotic}]
	Observe that $\{\psi^R\}_{R>0}$ is uniformly bounded in $H^1_{\mathrm{loc}}(\overline{\N}_{0,\infty})$, so that, up to a subsequence, it converges to a function $\psi^*\in H^1_{\mathrm{loc}}(\overline{\N}_{0,\infty})$ satisfying
\begin{equation*}
	\begin{cases}
		\partial_{y_i}\big(a_{ij}\partial_{y_j}\psi^*\big)=0 & \text{in } \N_{0,\infty},\\
		\psi^*=\zeta(W^\pm-a^\pm) & \text{on } \Gamma^\pm,\\
		\psi^*=\psi & \text{on } \{y_1=0\}.
	\end{cases}
\end{equation*}
Notice that $\psi$ itself satisfies the same system. By uniqueness, we conclude that $\psi^*=\psi|_{\N_{0,\infty}}$. Consequently, the estimate from Proposition~\ref{Pr:decay-estimate} holds for $\psi$, i.e.,
\[
\|\psi\|_{0,0;\N_{Y,\infty}}\leqslant \frac{C}{(1+Y)^\beta}.
\]
Applying the interior Schauder estimates as in Proposition~\ref{Pr:app-estimate}, we obtain \eqref{Est:beta}. This completes the proof of Theorem~\ref{Thm:asymptotic}.
\end{proof}

\section{Acknowledgments}
The research of Jun Chen and Xuemei Deng was partially supported by the Hubei Provincial Natural Science Foundation (Grant No.~2025AFB530). The work of Xiaoguang You was partially supported by the National Natural Science Foundation of China (Grant No.~12561038) and the Jiangxi Provincial Natural Science Foundation (Grant No.~20242BAB25008).

\section*{Data sharing}
Data sharing is not applicable to this article as no datasets were generated or analyzed during the current study.

\section*{Conflict of interest}
The authors declare that they have no financial or non-financial interests that are directly or indirectly related to the work submitted for publication.

\bibliographystyle{elsarticle-harv} 
\bibliography{my}
\end{document}